\documentclass[final]{siamart1116}



\usepackage{lipsum}
\usepackage{amsfonts}
\usepackage{graphicx}
\usepackage{caption}
\usepackage{subcaption}
\usepackage{cleveref}
\usepackage{algorithmic}
\usepackage[mathscr]{eucal}
\usepackage{lipsum}
\usepackage{multirow}

\numberwithin{theorem}{section}

\newcommand{\argmin}{\operatornamewithlimits{argmin}}
\DeclareMathOperator{\diag}{diag}

\def\R{{\mathbb R}}
\def\C{{\mathbb C}}
\def\N{{\mathbb N}}

\def\Z{{\mathbb Z}}

\def\dom{\mathrm{dom}}

\def\s{\sigma}
\def\f{\frac}
\def\p{\partial}

\def\na{\nabla}
\def\la{\langle}
\def\ra{\rangle}

\def\bi{{\mathbf i}}

\def\mA{{\mathcal A}}

\def\mC{{\mathcal C}}
\def\mD{{\mathcal D}}

\def\mR{{\mathcal R}}
\def\mS{{\mathcal S}}
\def\mT{{\mathcal T}}

\def\msF{{\mathscr F}}

\def\msH{{\mathscr H}}

\def\bi{\begin{itemize}} \def\ei{\end{itemize}}
\def\be{\begin{eqnarray*}}
\def\ee{\end{eqnarray*}}

\def\0{{\mathbf 0}}

\newcommand{\beq}{\begin{equation}}
\newcommand{\eeq}{\end{equation}}

\def\wt{\widetilde}

\def\XXint#1#2#3{{\setbox0=\hbox{$#1{#2#3}{\int}$ }
\vcenter{\hbox{$#2#3$ }}\kern-.55\wd0}}

\newcommand{\TheTitle}{PET-MRI Joint Reconstruction by Joint Sparsity Based Tight Frame Regularization}
\newcommand{\TheAuthors}{Jae Kyu Choi, Chenglong Bao, and Xiaoqun Zhang}

\headers{PET-MRI Joint Reconstruction by JSTF}{\TheAuthors}

\title{{\TheTitle}\thanks{Submitted to the editors DATE.}}

\author{
  Jae Kyu Choi\footnotemark[2]
  \and
  Chenglong Bao\footnotemark[3]
  \and
  Xiaoqun Zhang\footnotemark[8]
}

\usepackage{amsopn}

\begin{document}
\maketitle
\renewcommand{\thefootnote}{\fnsymbol{footnote}}
\footnotetext[2]{Institute of Natural Sciences, Shanghai Jiao Tong University, Shanghai, 200240 China, ({\tt jaycjk@sjtu.edu.cn}). The research of this author is partially supported by the Young Top-notch Talent program of China, 973 program (No. 2015CB856004), Sino-German center grant (GZ1025), and General Financial Grant from the China Postdoctoral Science Foundation (No. 2017M611539).}
\footnotetext[3]{Corresponding Author. Yau Mathematical Sciences Center, Tsinghua University, Beijing, 100084 China, ({\tt clbao@math.tsinghua.edu.cn}).}
\footnotetext[8]{Corresponding Author. Institute of Natural Sciences, School of Mathematical Sciences, and MOE-LSC, Shanghai Jiao Tong University, Shanghai, 200240 China, ({\tt xqzhang@sjtu.edu.cn}). The research of this author is partially supported by the Young Top-notch Talent program of China, 973 program (No. 2015CB856004) and Sino-German center grant (GZ1025).}
\renewcommand{\thefootnote}{\arabic{footnote}}

\begin{abstract} Recent technical advances lead to the coupling of PET and MRI scanners, enabling to acquire functional and anatomical data simultaneously. In this paper, we propose a tight frame based PET-MRI joint reconstruction model via the joint sparsity of tight frame coefficients. In addition, a non-convex balanced approach is adopted to take the different regularities of PET and MRI images into account. To solve the nonconvex and nonsmooth model, a proximal alternating minimization algorithm is proposed, and the global convergence is present based on Kurdyka-\L{}ojasiewicz property. Finally, the numerical experiments show that the our proposed models achieve better performance over the existing PET-MRI joint reconstruction models.
\end{abstract}

\begin{keywords} Joint reconstruction, positron emission tomography, magnetic resonance imaging, (tight) wavelet frames, data driven tight frames, joint sparsity, proximal alternating schemes
\end{keywords}

\begin{AMS}
65K15, 68U10, 90C90, 92C55
\end{AMS}

\section{Introduction}\label{Introduction}

Medical imaging is the technique and process of visualizing the anatomy of a body for clinical analysis and medical intervention, as well as the function of some organs and tissues. For many decades, single medical imaging modality has been available and widely used to image either function or anatomy. For example, positron emission tomography (PET) and single positron emission computed tomography (SPECT) provide information on the distribution of radioisotopes and tracers in living tissue. These distributions allow clinicians to determine certain functions of tissue \cite{R.B.Innis2007,D.Kazantsev2014}. On the other hand, magnetic resonance imaging (MRI) and X-ray computed tomography (CT) can provide the information on anatomical structure.

Recent technical advances have allowed for the coupling of PET and MRI, leading to significant advantages over traditional PET-CT and standalone MRI \cite{C.Catana2013}. The resulting PET-MRI modalities are able to acquire functional data and anatomical data simultaneously, while PET-CT modalities acquire data sequentially \cite{C.Catana2013,Cherry2006,M.J.Ehrhardt2015,Townsend2008}. Since both PET and MRI data stem from the same underlying object, it would be more desirable to explore the relation between two modality images and develop a joint reconstruction model, rather than restore images independently. Typically, the major drawback of PET is low signal-to-noise ratio and low spatial resolution in the visualized image \cite{D.Kazantsev2014}. On the other hand, MRI provides an anatomical image with superior spatial resolution, whereas the so-called $k$-space data acquisition process is so time consuming that there has been increasing demand for methods reducing the amount of acquired data without degrading the image quality \cite{M.Lustig2007}. However, when the $k$-space data is undersampled, the Nyquist sampling criterion is violated, and this inevitably leads to the aliasing in the reconstructed image \cite{M.Lustig2007}. Nevertheless, through the joint reconstruction process, the complementary information in MRI and PET images can be borrowed to improve restoration quality, especially in the case of degraded and/or incomplete data. In this paper, we only consider the two dimensional PET-MRI joint reconstruction, as the same modeling can be easily applied to the three dimensional cases, and even to  other joint reconstruction problems such as SPECT-CT.

Let $u_1$ and $u_2$ denote the two dimensional PET and MRI images, and let $A$ and $\msF_p$ denote the acquisition process operator of PET and MRI respectively. The acquired PET data is assumed to be an instance of Poisson distributed vector with expectation $Au_1+c$ where $c$ denotes the mean number of background counts \cite{M.Burger2014,M.J.Ehrhardt2015}. Meanwhile, the noise in MRI can be modeled as a complex white Gaussian noise \cite{M.J.Ehrhardt2015}. Then, by the maximum likelihood principle, the PET-MRI joint reconstruction problem is formulated as the following minimization problem:
\begin{align}\label{GeneralPETMRIJR}
\min_{u_1\in\mC_1,u_2\in\mC_2}~\la 1,Au_1+c\ra-\la f,\ln(Au_1+c)\ra+\f{\kappa}{2}\|\msF_pu_2-g\|_2^2+\mR(u_1,u_2),
\end{align}
where $\mC_1$, $\mC_2$ are feasible sets, $\kappa$ is a positive constant related to the noise level of MRI data, and $\mR(u_1,u_2)$ is a nonseparable regularization term for $u_1$ and $u_2$.

In relation to the joint reconstruction, the above model \eqref{GeneralPETMRIJR} naturally arises the question of what is the proper choice of $\mR(u_1,u_2)$. As the function cannot be independent from structure \cite{M.J.Ehrhardt2015,A.Rangarajan2000}, it is reasonable to assume that the images to be reconstructed have \emph{structural correlation} \cite{M.J.Ehrhardt2015}. In the literature, the structure of an image is mathematically modeled as a singularity such as edge and ridge, and the idea of structural correlation based joint reconstruction was first explored in the color image processing as the color channels in general share the singularities \cite{X.Bresson2008,M.J.Ehrhardt2014,B.Goldluecke2012,R.Kimmel2000,G.Sapiro1996,N.Sochen1998,D.Tschumperle2005}. The successful examples include the vectorial total variation \cite{X.Bresson2008,B.Goldluecke2012} and the parallel level set (PLS) \cite{M.J.Ehrhardt2014}. Motivated by the works on the color image processing, the authors in \cite{M.J.Ehrhardt2015} recently applied joint total variation (JTV) \cite{M.J.Ehrhardt2015,E.Haber2013} and PLS priors to the PET-MRI joint reconstruction. Their analysis shows that the G\^ateaux derivatives of both JTV and PLS lead to the nonlinear, inhomogeneous diffusion, which promotes the edge-enhancing effect. However, even though both JTV and PLS methods showed significant improvements over the individual reconstruction methods, the regularization term $\mR(u_1,u_2)$ has to be further improved as the artifacts appear from the other modality image when the images have complex structures.

Wavelet frames are known to be effective in capturing  image singularities from degraded images. This is due to the multiscale structure of the wavelet frame systems, short supports of the framelet functions with varied vanishing moments, and the presence of both low pass and high pass filters in the wavelet frame filter banks, which are desirable in sparsely approximating images \cite{B.Dong2017}. In addition, due to the redundancy of systems, wavelet frames are more robust to errors than the (bi)orthogonal wavelet bases in \cite{Mallat2008}. Finally, the special structure of filter banks enables the construction of a so-called \emph{data driven tight frame} to provide a better sparse approximation of a given input image adaptively \cite{J.F.Cai2014,J.Wang2015,R.Zhan2016,W.Zhou2013}, with the computational efficiency over the K-SVD method in \cite{M.Aharon2006}. The successful applications in the medical image reconstruction can be found in the sparse angle X-ray CT reconstruction (wavelet frames in \cite{B.Dong2013a}, and data driven tight frames in \cite{R.Zhan2016}), the limited view CT reconstruction \cite{J.Choi2016}, the sparse MRI reconstruction \cite{Y.Liu2015}, and the data driven tight frame based CT reconstruction in \cite{W.Zhou2013}, etc.

In this paper, we propose a new tight frame (wavelet frame and data driven tight frame) based PET-MRI joint reconstruction model. Our proposed model is also based on the observation that two modality images have a strong correlation in the image singularities. As the singularities are sparsely captured by the (data driven) wavelet tight frame, our model enforces the \emph{joint sparsity} \cite{M.Fornasier2008} of frame coefficients to exploit such a structural correlation. Related examples can be found in the previous work in \cite{J.Wang2015}, where the authors use the \emph{vector $\ell_0$ norm} of frame coefficients to construct a multi channel data driven tight frame enforcing the structural correlation between image channels. Indeed, their applications to color image denoising and joint color-depth image reconstruction show the improvements over the independent reconstruction. Motivated by their success in color image processing, we further explore the application in the PET-MRI joint reconstruction. More precisely, using the vector $\ell_0$ norm of frame coefficients, we propose a balanced approach \cite{J.F.Cai2009,J.F.Cai2010,R.H.Chan2003} for the joint reconstruction from the data with Poisson noise (PET) and the undersampled $k$-space data (MRI). Finally, as it can be observed that the two modality images have different regularity, we expect the use of balanced approach will help suppress the artifacts which the existing models in \cite{M.J.Ehrhardt2015} may introduce.

The rest of this paper is organized as follows. In \cref{Preliminaries}, we review the basic concepts of wavelet frame and data driven tight frame. In \cref{ModelandAlgorithm}, we introduce our tight frame based PET-MRI joint reconstruction model, followed by the proximal alternating minimization algorithm, and the convergence analysis is given at the end of this section. In \cref{Results}, we present some experimental results, and the concluding remarks are given in \cref{Conclusion}.

\section{Preliminaries and Related Works}\label{Preliminaries}

\subsection{Wavelet Tight Frames}\label{WaveletFrame}

In this subsection, we briefly introduce the concept of tight frames and wavelet tight frames. Interested readers may consult \cite{J.F.Cai2008a,B.Dong2013,B.Dong2015,A.Ron1997,Shen2010} for details. Let $\msH$ be a Hilbert space equipped with the inner product $\la\cdot,\cdot\ra$ and the norm $\|\cdot\|$. A sequence $\{\varphi_n:n\in\Z\}\subseteq\msH$ is called a tight frame on $\msH$ if
\begin{align}\label{TightFrame}
\|u\|^2=\sum_{n\in\Z}|\la u,\varphi_n\ra|^2~~~~~\text{for all}~~~u\in\msH.
\end{align}
Given $\{\varphi_n:n\in\Z\}\subseteq\msH$, the analysis operator $W:\msH\to\ell_2(\Z)$ is defined as
\begin{align*}
u\in\msH\mapsto Wu=\{\la u,\varphi_n\ra:n\in\Z\}\in\ell_2(\Z),
\end{align*}
and the synthesis operator $W^T:\ell_2(\Z)\to\msH$ is defined as
\begin{align*}
v\in\ell_2(\Z)\mapsto W^Tv=\sum_{n\in\Z}v[n]\varphi_n\in\msH.
\end{align*}
Thus, $\{\varphi_n:n\in\Z\}$ is a tight frame on $\msH$ if and only if $W^TW=I$ with $I:\msH\to\msH$ being the identity. This means that for a given tight frame $\{\varphi_n:n\in\Z\}$, we have the following canonical expression:
\begin{align*}
u=\sum_{n\in\Z}\la u,\varphi_n\ra\varphi_n,
\end{align*}
where $Wu=\{\la u,\varphi_n\ra:n\in\Z\}$ is called the canonical tight frame coefficients. Hence, the tight frames are generalizations of orthonormal bases to the redundant systems. In fact, a tight frame is an orthonormal basis if and only if $\|\varphi_n\|=1$ for all $n\in\Z$.

One of the most widely used class of tight frames is discrete wavelet frame generated by a set of finitely supported filters $\{q_1,\cdots,q_m\}$. In this paper, we only discuss the undecimated wavelet frames, which is also known as the translation invariant wavelet frame transform. Given $q\in\ell_1(\Z)$, define a convolution operator $\mS_q:\ell_2(\Z)\to\ell_2(\Z)$ by
\begin{align*}
(\mS_qu)[n]=(q\ast u)[n]=\sum_{k\in\Z}q[n-k]u[k]~~~~~\text{for}~~~u\in\ell_2(\Z).
\end{align*}
Given a set of finitely supported filters $\{q_1,\cdots,q_m\}$, define the analysis operator $W$ and the synthesis operator $W^T$ respectively by
\begin{align}
W&=\left[\mS_{q_1[-\cdot]}^T,\mS_{q_2[-\cdot]}^T,\cdots,\mS_{q_m[-\cdot]}^T\right]^T\label{AnalConv}\\
W^T&=\left[\mS_{q_1},\mS_{q_2},\cdots,\mS_{q_m}\right].\label{SyntConv}
\end{align}
Then, the rows of $W$ form a tight frame on $\ell_2(\Z)$ if and only if $W^TW=I$, i.e. the filters $\{q_1,\cdots,q_m\}$ satisfy one of the so-called {\emph{unitary extension principle}} (UEP) condition \cite{B.Han2011}:
\begin{align}\label{SomeUEP}
\sum_{l=1}^m\sum_{k\in\Z}q_l[n+k]q_l[k]=\delta_n=\left\{\begin{array}{cl}
1~&\text{if}~n=0,\vspace{0.4em}\\
0~&\text{if}~n\neq0.
\end{array}\right.
\end{align}
Once the one dimensional filters generate a wavelet tight frame on $\ell_2(\Z)$, the higher dimensional wavelet tight frame could be obtained via the tensor product of the one dimensional filters.

\subsection{Data Driven Tight Frames}\label{DataDrivenTightFrame}

Even though tensor product wavelet frame based approach is simple to implement and able to achieve sparse representation of a piecewise smooth image, the major disadvantage is that these framelets mostly focus on singularities along the horizontal and vertical directions \cite{J.F.Cai2014,J.Liang2014}. For example, when an image has complex geometries such as directional textures, the tensor product wavelet frame coefficients may not be sparse enough \cite{J.F.Cai2014}.

In order to explore the adaptive sparse approximation of images, various data driven methods have been proposed (e.g. \cite{M.Aharon2006,J.F.Cai2014}). Among these methods, one promising transformation is the so-called data driven tight frame \cite{J.F.Cai2014}. It aims to construct a tight frame which sparsely approximates a given image $u$ adaptively. The tight frame $W=W(q_1,\cdots,q_{r^2})$ is generated by finitely supported real valued filters $\{q_1,\cdots,q_{r^2}\}$ satisfying \eqref{SomeUEP}. More concretely, given an image $u$, the data driven tight frame is obtained via solving the following minimization:
	\begin{align}\label{DDTFModel}
	\min_{v,W}~\|v-Wu\|_2^2+\lambda^2\|v\|_0~~~~~\text{subject to}~~~~W^TW=I,
	\end{align}
	where the $\ell_0$ norm $\|v\|_0$ encodes the number of nonzero entries in the coefficient vector $v$. To solve \eqref{DDTFModel}, we first reformulate it in the following way. Reshape all $r\times r$ patches of $u$ into $G\in\R^{r^2\times p}$ where $p$ denotes the number of total patches. Let $D\in\R^{r^2\times r^2}$ be the matrix generated by concatenating filters $\{q_1,\cdots,q_{r^2}\}$ into column vectors $\{\vec{q}_1,\cdots,\vec{q}_{r^2}\}$. Denote $V\in\R^{r^2\times p}$ as the frame coefficients. Hence, we have
	\begin{align*}
	u&\Leftrightarrow G=(\vec{g}_1,\cdots\,\vec{g}_p)\in\R^{r^2\times p}\\
	W&\Leftrightarrow D=\left(\vec{q}_1,\cdots,\vec{q}_{r^2}\right)\in\R^{r^2\times r^2}\\
	v&\Leftrightarrow V=(\vec{v}_1,\cdots,\vec{v}_p)\in\R^{r^2\times p}.
	\end{align*}
	Under this setting, \eqref{DDTFModel} is equivalent to
	\begin{align}\label{DDTFModelModi}
	\min_{V,D}~\|V-D^TG\|_F^2+\lambda^2\|V\|_0~~~~~\text{subject to}~~~~DD^T=I
	\end{align}
where $\|\cdot\|_F$ denotes the Frobenius norm of a matrix. For solving \eqref{DDTFModelModi}, the alternating minimization with closed form solutions is presented in \cite{J.F.Cai2014}, and the proximal alternating minimization (PAM) scheme with global convergence property is proposed in \cite{C.Bao2015}.

Recently, such a patch-based adaptive construction of tight frame filters has been extended to the following data driven tight frame for multi channel image in \cite{J.Wang2015}:
\begin{align}\label{DDTFMultiChannelModi}
\begin{split}
&\min_{v,\{W_i\}_{i=1}^c}~\sum_{i=1}^cw_i\|W_iu_i-v_i\|_2^2+\lambda\|v\|_{2,0}\\
&\text{subject to}~~W_i^TW_i=I,~~~i=1,\cdots,c.
\end{split}
\end{align}
Here, $W_i$, $u_i$, and $v_i$ for $i=1,2,\ldots,c$ respectively denote the tight frame filters, $i$th channel image and frame coefficients, and $\|v\|_{2,0}=\left|\{j:|v_1[j]|^2+\cdots+|v_c[j]|^2\neq0\}\right|$ with $v=(v_1,\cdots,v_c)$ is the vector $\ell_0$ norm to promote the joint sparsity. This vector $\ell_0$ norm $\|\cdot\|_{2,0}$ encodes the structural correlation among the channel images, which demonstrates the advantages in multi channel image restoration \cite{J.Wang2015}.

\section{Model and Algorithm}\label{ModelandAlgorithm}

\subsection{PET-MRI Joint Reconstruction Model}\label{Model}

Let $u_1$ and $u_2$ respectively denote the PET and MRI images to be reconstructed. In the literature, the observed PET data $f\in\R_+^{M}$ is corrupted by the Poisson noise while the MRI data (or $k$-space data) $g\in\C^{L}$ is corrupted by white complex Gaussian noise. Assuming that the noise is independent for each pixel, we have
\begin{align*}
P(f|u_1)=\prod_{j=1}^{M}\f{(Au_1+c)[j]^{f[j]}e^{-(Au_1+c)[j]}}{f[j]!}
\end{align*}
where the PET forward operator $A:\R^N\to\R^{M}$ is in general modelled as the discrete attenuated Radon transform \cite{E.M.Bardsley2010,M.Burger2014}, and $c$ denotes the mean number of background counts. Meanwhile, for MRI, we have
\begin{align*}
P(g|u_2)\propto\prod_{j=1}^L\exp\left\{-\f{|(\msF_pu_2)[j]-g[j]|^2}{2\s^2}\right\}
\end{align*}
where the MRI forward operator $\msF_p:\R^N\to\C^L$ is modelled as the unitary discrete Fourier transform followed by a projection onto the measured frequencies, and $\s$ is the standard deviation of noise in $k$-space data. Given that $f$ and $g$ are conditionally independent on $u_1$ and $u_2$, we have
\begin{align}\label{JointProb}
P(f,g|u_1,u_2)=P(f|u_1,u_2)P(g|u_1,u_2)=P(f|u_1)P(g|u_2).
\end{align}
Then, together with $P(u_1,u_2)\propto\exp(-\mR(u_1,u_2))$ and the Bayes' rule, the maximum a posteriori is equivalent to the minimization problem \eqref{GeneralPETMRIJR}.

Let $v_1$ and $v_2$ denote the wavelet frame coefficients of $u_1$ and $u_2$ under a given wavelet frame transform $W$ respectively. As the singularities of $u_1$ and $u_2$ are mostly correlated, we assume that the sparsity of $v_1$ and $v_2$ is correlated. Exploiting the idea of joint sparsity in \cite{J.Wang2015}, we propose our joint sparsity tight frame (JSTF) PET-MRI joint reconstruction model as follows:
\begin{align}\label{Proposed}
\begin{split}
&\min_{u_1\in\mC_1,u_2\in\mC_2,v}~\Phi_1(u_1)+\Phi_2(u_2)+\f{\mu_1}{2}\|Wu_1-v_1\|_2^2+\f{\mu_2}{2}\|Wu_2-v_2\|_2^2+\lambda\|v\|_{2,0}
\end{split}
\end{align}
where
\begin{align*}
\Phi_1(u_1)&=\la 1,Au_1+c\ra-\la f,\ln(Au_1+c)\ra\\
\Phi_2(u_2)&=\f{\kappa}{2}\|\msF_pu_2-g\|_2^2,
\end{align*}
and $\kappa=1/\s^2$. Here, $\mC_1=[0,a_1]^N$ and $\mC_2=[0,a_2]^N$ respectively denote feasible sets which reflect the physical properties of PET and MRI images.

As each modality image may contain information on the different features despite the structural correlation, we may have to actively learn the joint sparse approximation of $u_1$ and $u_2$ \cite{J.Wang2015}. Hence, we also propose the following joint sparsity based data driven tight frame (JSDDTF) joint reconstruction model
\begin{align}\label{DDTFPETMRI}
\begin{split}
&\min_{\substack{u_1\in\mC_1,u_2\in\mC_2,\\
W_1,W_2,v}}~\Phi_1(u_1)+\Phi_2(u_2)+\f{\mu_1}{2}\|W_1u_1-v_1\|_2^2+\f{\mu_2}{2}\|W_2u_2-v_2\|_2^2+\lambda\|v\|_{2,0}\\
&~\text{subject to}~~W_i^TW_i=I,~~~~~i=1,2,
\end{split}
\end{align}
where the tight frames $W_1$ and $W_2$ are learned from the two modality images $u_1$ and $u_2$ respectively.

In the literature, the joint sparsity of tight frame coefficients can also be achieved via the following joint analysis (JAnal) based approach
\begin{align}\label{JAnalPETMRI}
\min_{u_1\in\mC_1,u_2\in\mC_2}~\Phi_1(u_1)+\Phi_2(u_2)+\lambda\|Wu\|_{2,1}
\end{align}
with
\begin{align*}
\|Wu\|_{2,1}=\left\|\left(\|Wu_1\|_2^2+\|Wu_2\|_2^2\right)^{1/2}\right\|_1,~~~~~u=(u_1,u_2).
\end{align*}
Indeed, under properly chosen parameters, the above JAnal model \eqref{JAnalPETMRI} can be viewed as a discretization of a variational model including the following JTV model:
\begin{align}\label{JTVPETMRI}
\min_{u_1\in\mC_1,u_2\in\mC_2}~\Phi_1(u_1)+\Phi_2(u_2)+\lambda\left\|\left(\|\na u_1\|_2^2+\|\na u_2\|_2^2\right)^{1/2}\right\|_1,
\end{align}
while achieving the restoration results superior to the standard finite difference discretization of JTV (See e.g. \cite{J.F.Cai2012} for the related details). Meanwhile, our model takes the balanced approach (e.g. \cite{J.F.Cai2009,J.F.Cai2010,R.H.Chan2003}) as a special case; by fixing $u_1=W^Tv_1$ and $u_2=W^Tv_2$, the JSTF model \eqref{Proposed} reduces to
\begin{align*}
\begin{split}
\min_{v,W^Tv_1\in\mC_1,W^Tv_2\in\mC_2}~&\Phi_1(W^Tv_1)+\Phi_2(W^Tv_2)\\
&+\f{\mu_1}{2}\|(I-WW^T)v_1\|_2^2+\f{\mu_2}{2}\|(I-WW^T)v_2\|_2^2+\lambda\|v\|_{2,0},
\end{split}
\end{align*}
and the similar reasoning can be applied to the JSDDTF model \eqref{DDTFPETMRI} as well, by fixing $u_i=W_i^Tv_i$ for $i=1,2$.

When $\mu_1$, $\mu_2=\infty$, the JSTF model \eqref{Proposed} can be rewritten as the following nonconvex variant of JAnal model:
\begin{align}\label{NonconvexJAnal}
\min_{u_1\in\mC_1,u_2\in\mC_2}~\Phi_1(u_1)+\Phi_2(u_2)+\lambda\|Wu\|_{2,0}
\end{align}
as $v_i=Wu_i$ for $i=1,2$. Hence, the main difference between \eqref{Proposed} and the JAnal model \eqref{JAnalPETMRI} lies in the contribution of $\|Wu_i-v_i\|_2^2$, which measures the distance between $v_i$ (frame coefficients) and $Wu_i$ (canonical coefficients of $u_i$). Since the canonical coefficients in general may not be sparse enough due to the complex geometry of human body, we use the term $\|Wu_i-v_i\|_2^2$ to provide the flexibility in sparse approximation of two modality images. Most importantly, under some mild conditions on the tight frame $W$, the canonical coefficients not only provide the information on the image singularities, but their magnitude also reflects the regularity of an image (See \cite{L.Borup2004} for details). This means that, when the JAnal model \eqref{JAnalPETMRI} is used, we implicitly assume that the two modality images should have the same regularity, as well as the exact coincidence of singularities. However, even though the singularities of two modality images have a strong correlation, they may not exactly coincide with each other as different modality images reflect different physical properties of a human body. In addition, when two images have different regularity, the JAnal model can cause the singularities in one modality image to affect the smooth region in the other even if their singularities coincide. As a consequence, the JAnal model may introduce the artifacts in the reconstructed images. In contrast, $\mu_1$ and $\mu_2$ in our proposed model play a role of balancing the structural correlation of two modality images and their different regularity. As we can observe that MRI images are less noisy compared to PET image, in our proposed model, $\mu_2$ is chosen to be larger than $\mu_1$, and we expect that this different choice of $\mu_i$ will help to suppress such artifacts. Finally, the same arguments are applied to the JSDDTF model \eqref{DDTFPETMRI} as it has the same modeling philosophy as the JSTF model \eqref{Proposed} except that we learn two tight frames adaptively from $u_1$ and $u_2$.

\subsection{Alternating Minimization Algorithm}\label{Algorithm}
We propose the proximal alternating minimization (PAM) algorithms \cite{H.Attouch2010} to solve both \eqref{Proposed} and \eqref{DDTFPETMRI}. As the two schemes are similar  except that  $W_1$ and $W_2$ are additionally  updated in \eqref{DDTFPETMRI}, we only consider the algorithm for solving \eqref{DDTFPETMRI}. Given initializations $u_1^0$ and $u_2^0$, the initializations of $v$, $W_1$, and $W_2$ are obtained via
\begin{align}\label{FraCoeffIni}
\begin{split}
&\min_{v,W_1,W_2}~\mu_1\|W_1u_1^0-v_1\|_2^2+\mu_2\|W_2u_2^0-v_2\|_2^2+\wt{\lambda}\|v\|_{2,0}\\
&\text{subject to}~~W_i^TW_i=I,~~~i=1,2,
\end{split}
\end{align}
using the algorithm in \cite{J.F.Cai2014}. After the initializations, we optimize $\{u_1,u_2,v,W_1,W_2\}$ by solving the model \eqref{DDTFPETMRI} alternatively. The full details are described in \cref{Alg1}. Note that we consider the subproblems separately whenever they are separable.

\begin{algorithm}[htp!]
\caption{Proximal Alternating Minimization Algorithm for \eqref{DDTFPETMRI}}\label{Alg1}
\begin{algorithmic}
\STATE{\textbf{Initialization:} $u_1^0$, $u_2^0$, $v^0$, $W_1^0$, $W_2^0$}
\FOR{$k=0$, $1$, $2$, $\cdots$}
\STATE{\textbf{(1)} Optimize $u_1$ and $u_2$:
\begin{align}\label{u1u2update}
\begin{split}
u_1^{k+1}&=\argmin_{u_1\in\mC_1}~\Phi_1(u_1)+\f{\mu_1}{2}\|W_1^ku_1-v_1^k\|_2^2+\f{\alpha_1^k}{2}\|u_1-u_1^k\|_2^2\\
u_2^{k+1}&=\argmin_{u_2\in\mC_2}~\Phi_2(u_2)+\f{\mu_2}{2}\|W_2^ku_2-v_2^k\|_2^2+\f{\alpha_2^k}{2}\|u_2-u_2^k\|_2^2.
\end{split}
\end{align}
\textbf{(2)} Optimize $W_1$ and $W_2$:
\begin{align}\label{Wupdate}
\begin{split}
W_1^{k+1}&=\argmin_{W_1^TW_1=I}~\f{\mu_1}{2}\|W_1u_1^{k+1}-v_1^k\|_2^2+\f{\beta_1^k}{2}\|W_1-W_1^k\|_2^2\\
W_2^{k+1}&=\argmin_{W_2^TW_2=I}~\f{\mu_2}{2}\|W_2u_2^{k+1}-v_2^k\|_2^2+\f{\beta_2^k}{2}\|W_2-W_2^k\|_2^2.
\end{split}
\end{align}
\textbf{(3)} Optimize $v=(v_1,v_2)$:
\begin{align}\label{vupdate}
\begin{split}
v^{k+1}=\argmin_{v}~\lambda\|v\|_{2,0}&+\f{\mu_1}{2}\|v_1-W_1^{k+1}u_1^{k+1}\|_2^2\\
&+\f{\mu_2}{2}\|v_2-W_2^{k+1}u_2^{k+1}\|_2^2+\f{\gamma^k}{2}\|v-v^k\|_2^2.
\end{split}
\end{align}}
\ENDFOR
\end{algorithmic}
\end{algorithm}

It is easy to see that each problem in \eqref{u1u2update} is strongly convex and smooth, and there are numerous algorithms to solve it. In our algorithm, $u_1^{k+1}$ is updated using the projected scaled gradient method \cite{W.Jin2016} which is a special case of preconditioned alternating projection algorithm \cite{A.Krol2012}; Let $u_1^{k,0}=u_1^k$. For $j=0$, $1$, $2$, $\cdots$
\begin{align*}
M^j&=\diag(u^{j}/A^T1)\\
u_1^{j+1/2}&=u_1^{j}-\rho_1^jM^j\left[A^T\left(1-\f{f}{Au_1^{j}+c}\right)+\mu_1\left(u_1^{j}-(W_1^k)^Tv_1^k\right)+\alpha_1^k\left(u_1^{j}-u_1^k\right)\right]\\
u_1^{j+1}&=\min\left\{\max(u_1^{j+1/2},0),a_1\right\}.
\end{align*}
Similarly, $u_2^{k+1}$ is updated using the projected gradient method \cite{D.G.Luenberger2016}; Let $u_2^{k,0}=u_2^k$. For $j=0$, $1$, $2$, $\cdots$
\begin{align*}
u_2^{j+1/2}&=u_2^j-\rho_2^j\left[\kappa\msF_p^*(\msF_pu_2^j-g)+\mu_2\left(u_2^j-(W_2^k)^Tv_2^k\right)+\alpha_2^k\left(u_2^j-u_2^k\right)\right]\\
u_2^{j+1}&=\min\left\{\max(u_2^{j+1/2},0),a_2\right\},
\end{align*}
where $\msF_p^*$ is the conjugate transpose of $\msF_p$. In any case, we omit the outer iteration $k$ on $M$, $u_1$, and $u_2$. Since the constraints $\mC_1$ and $\mC_2$ are convex, the above iterations converge to the global minimizers for appropriately chosen $\rho_1^j$ and $\rho_2^j$ \cite{W.Jin2016,D.G.Luenberger2016}.

To solve \eqref{Wupdate} and \eqref{vupdate}, we introduce
\begin{align*}
\{u_1,W_1,v_1\}&\Leftrightarrow\{G_1,D_1,V_1\}\\
\{u_2,W_2,v_2\}&\Leftrightarrow\{G_2,D_2,V_2\}
\end{align*}
using $r\times r$ patches of $u_1$ and $u_2$. Under the above reformulation, \eqref{Wupdate} and \eqref{vupdate} become
\begin{align}
\begin{split}\label{Aupdate}
D_1^{k+1}&=\argmin_{D_1D_1^T=I}~\f{\mu_1}{2}\|D_1^TG_1^{k+1}-V_1^k\|_F^2+\f{\beta_1^k}{2}\|D_1-D_1^k\|_F^2\\
D_2^{k+1}&=\argmin_{D_2D_2^T=I}~\f{\mu_2}{2}\|D_2^TG_2^{k+1}-V_2^k\|_F^2+\f{\beta_2^k}{2}\|D_2-D_2^k\|_F^2,\\
\end{split}
\end{align}
and
\begin{align}
\begin{split}\label{Vupdate}
V^{k+1}&=\argmin_{V}~\lambda\|V\|_{2,0}+\f{\mu_1}{2}\|V_1-(D_1^{k+1})^TG_1^{k+1}\|_F^2\\
&\hspace{8.35em}+\f{\mu_2}{2}\|V_2-(D_2^{k+1})^TG_2^{k+1}\|_F^2+\f{\gamma^k}{2}\|V-V^k\|_F^2
\end{split}
\end{align}
where $V=(V_1,V_2)$. Hence, to solve \eqref{Aupdate}, we use the following closed form formulae:
\begin{align}\label{WupdateClosed}
\begin{split}
D_1^{k+1}&=X_1Y_1^T~~~\text{where}~~X_1,\Sigma_1,Y_1~~\text{is the SVD of}~~G_1^{k+1}(V_1^k)^T+\f{\beta_1^k}{\mu_1}D_1^k\\
D_2^{k+1}&=X_2Y_2^T~~~\text{where}~~X_2,\Sigma_2,Y_2~~\text{is the SVD of}~~G_2^{k+1}(V_2^k)^T+\f{\beta_2^k}{\mu_2}D_2^k.
\end{split}
\end{align}
The closed form solution to \eqref{Vupdate} is given by
\begin{align}\label{VupdateClosed}
V^{k+1}=\mT_{2\lambda,\mu+\gamma^k}\left[\left(\f{\mu_1(D_1^{k+1})^TG_1^{k+1}+\gamma^kV_1^k}{\mu_1+\gamma^k},\f{\mu_2(D_2^{k+1})^TG_2^{k+1}+\gamma^kV_2^k}{\mu_2+\gamma^k}\right)\right]
\end{align}
where $\mT_{\lambda,\mu}$ with $\mu=(\mu_1,\mu_2)$ is the generalized hard thresholding formula \cite{J.Wang2015} defined as
\begin{align}\label{GeneralHardThresh}
(\mT_{\lambda,w}[(U_1,U_2)])_j=\left\{\begin{array}{cl}
(U_{1,j},U_{2,j})~&\text{if}~~\sum_{i=1}^2w_i|U_{i,j}|^2\geq\lambda,\vspace{0.4em}\\
0~&\text{otherwise}.
\end{array}\right.
\end{align}
Here, $(\mT_{\lambda,w}[(U_1,U_2)])_j$ and $U_{i,j}$ denote the $j$th row vector of $\mT_{\lambda,w}[(U_1,U_2)]$ and $U_i$ respectively.

\subsection{Convergence Analysis}\label{Convergence}

This section is devoted to the convergence analysis of our alternating minimization algorithm. Due to the same reason to the PAM algorithm, we focus on the convergence analysis of sequence $\{(u_1^k,u_2^k,W_1^k,W_2^k,v^k):k\in\N\}$ in \eqref{DDTFPETMRI} generated by \cref{Alg1}. To do this, we assume the followings:
\begin{enumerate}
\item[A1.] $f>0$ and $A$ satisfies
\begin{align*}
A[i,j]\geq0,~~~~A1>0,~~~\&~~~A^T1>0.
\end{align*}
where $A[i,j]$ denotes the $(i,j)$th entry of $A$.
\item[A2.] $c>0$ is a constant vector.
\item[A3.] There exist $0<L<U$ such that $L\leq\alpha_1^k,\alpha_2^k,\beta_1^k,\beta_2^k,\gamma^k\leq U$ for all $k\in\N$.
\end{enumerate}

We first introduce some basic notation and definitions.

\begin{definition} Let $f:\R^n\to\R\cup\{\infty\}$ be a proper and lower semicontinuous (lsc) function.
\begin{enumerate}
\item The domain of $f$, denoted as $\dom(f)$ is defined as
\begin{align*}
\dom(f)=\{x\in\R^n:f(x)<\infty\}.
\end{align*}
\item For each $x\in\dom(f)$, the Fr\'echet subdifferential of $f$ at $x$ is defined as
\begin{align*}
\p_Ff(x)=\left\{s\in\R^n:\liminf_{y\to x}\f{f(y)-f(x)-\la s,y-x\ra}{\|y-x\|}\geq0\right\}.
\end{align*}
If $x\notin\dom(f)$, then we set $\p_Ff(x)=\emptyset$.
\item The (limiting-) subdifferential of $f$ at $x$ is defined as
\begin{align*}
\p f(x)=\left\{s\in\R^n:\exists x_n~\text{s.t.}~f(x_n)\to f(x)~\&~s_n\in\p_Ff(x_n)\to s\right\}.
\end{align*}
The domain of $\p f$ is defined as $\dom(\p f)=\{x\in\R^n:\p f(x)\neq\emptyset\big\}$.
\item $x\in\dom(f)$ is a critical point of $f$ if $0\in\p f(x)$.
\end{enumerate}
\end{definition}

In the literature, the global convergence of the alternating schemes are established on the framework of the Kurdyka-Lojasiewicz (KL) property in \cite{Kurdyk1998,Lojasiewicz1995}. Even though it is challenging to verify whether a given function $f$ satisfies the KL property, there are some functions of special cases which have proven to satisfy the KL property, such as analytic functions and semi-algebraic functions (e.g. \cite{H.Attouch2010,C.Bao2016,J.Bolte2014,Y.Xu2013} ).

To apply the framework of KL property to our model, we let
\begin{align*}
x=(x_1,x_2,x_3,x_4,x_5)=(u_1,u_2,W_1,W_2,v)
\end{align*}
for notational simplicity. Recall $\mC_1=[0,a_1]^N$ and $\mC_2=[0,a_2]^N$, and let $\mD=\{W\in\R^{r^2N\times N}:W^TW=I_{r^2\times r^2}\}$. Then we define
\begin{align*}
P(x)&=\Phi_1(u_1)+\Phi_2(u_2)+\f{\mu_1}{2}\|W_1u_1-v_1\|_2^2+\f{\mu_2}{2}\|W_2u_2-v_2\|_2^2\\
r_i(x_i)&=\imath_{\mC_i}(x_i)~~~~~i=1,2\\
r_i(x_i)&=\imath_{\mD}(x_i)~~~~~i=3,4\\
r_5(x_5)&=\lambda\|x_5\|_{2,0}
\end{align*}
where $\imath_{\mA}$ is the indicator function of a set $\mA$: $\imath_{\mA}(x)=0$ if $x\in\mA$, $\imath_{\mA}(x)=\infty$ otherwise.

Using the above notation, we reformulate the model \eqref{DDTFPETMRI} as
\begin{align}\label{ObjectReform}
\min_x~H(x):=P(x)+\sum_{i=1}^5r_i(x_i).
\end{align}
Note that it is not hard to verify that $\Phi_1(u_1)$ and the indicator functions are analytic, and $r_5(x_5)$ is a semi-algebraic function. Together with that the remaining terms are all polynomials, we can see that the functional $H$ defined as in \eqref{ObjectReform} satisfies the KL property.

Under this reformulation, we can present the following result on the global convergence.

\begin{theorem}\label{Th1} Assume that A1-A3 hold. Let $H(x)$ be the objective function defined as in \eqref{ObjectReform}. Then the sequence $\{x^k:k\in\N\}$ generated by \cref{Alg1} converges to a critical point of $H$. Moreover, $\{x^k\}_{k\in\N}$ has the following finite length property:
\begin{align}\label{FiniteLength}
\sum_{k=0}^{\infty}\|x^{k+1}-x^k\|_2<\infty.
\end{align}
\end{theorem}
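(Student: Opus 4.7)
My plan is to deploy the Kurdyka--\L{}ojasiewicz (KL) framework for block-coordinate proximal minimization introduced by Attouch--Bolte--Redont--Soubeyran and refined by Bolte--Sabach--Teboulle. Within that framework, convergence of the whole sequence to a critical point together with the finite length property \eqref{FiniteLength} follows once four ingredients are in place: (i) sufficient decrease, $H(x^k) - H(x^{k+1}) \geq c_1 \|x^{k+1}-x^k\|_2^2$; (ii) a relative subgradient bound, i.e.\ existence of $s^{k+1} \in \p H(x^{k+1})$ with $\|s^{k+1}\|_2 \leq c_2 \|x^{k+1}-x^k\|_2$; (iii) boundedness of $\{x^k\}$ with continuity of $H$ along it; and (iv) the KL property of $H$, which the authors have already observed.

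For (i), the proximal terms do all the work. Each of the five block updates in \cref{Alg1} has the form ``minimize a partial functional plus $\tfrac{\tau^k}{2}\|\cdot - x_i^k\|_2^2$'' with $\tau^k \in [L,U]$ by assumption A3. Comparing the subproblem's objective at its minimizer $x_i^{k+1}$ with its value at $x_i^k$ and discarding the nonnegative proximal penalty yields a decrease in $H$ of at least $\tfrac{L}{2}\|x_i^{k+1}-x_i^k\|_2^2$ for each block; summing over the five blocks gives (i) with $c_1 = L/2$. For (ii), I would write down the first-order optimality condition of each subproblem --- a variational inequality on $\mC_i$ for the $u_i$ blocks, a normal-cone condition on the Stiefel-type manifold $\mD$ for the $W_i$ blocks, and the limiting subdifferential of $\|\cdot\|_{2,0}$ for the $v$ update --- and rearrange it into an element of the corresponding partial subdifferential of $H$ at $x^{k+1}$. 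The discrepancy between the subproblem's optimality condition (which references the old $W_1^k$, $v^k$, etc.) and the full $H$-subdifferential at $x^{k+1}$ is carried entirely by lagged quantities of the form $W_i^{k+1}-W_i^k$, $v^{k+1}-v^k$, and $u_i^{k+1}-u_i^k$, which on a bounded trajectory are Lipschitz-controlled and therefore yield (ii).

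Boundedness (iii) is mostly automatic: $\mC_1, \mC_2$ are cubes and $\mD$ is compact, so $u_i^k$ and $W_i^k$ stay in compact sets, while the closed-form hard-thresholding formula \eqref{VupdateClosed} writes each nonzero row of $V^{k+1}$ as a convex combination of the bounded quantities $(D_i^{k+1})^T G_i^{k+1}$ and $V_i^k$, so $v^k$ is inductively bounded. Assumptions A1--A2 ensure that $Au_1+c$ stays bounded away from zero on $\mC_1$ (because $c>0$ and $u_1 \geq 0$), so $\Phi_1$ and $\nabla\Phi_1$ are continuous there, and $H$ is continuous along the bounded trajectory. Regarding (iv), $\imath_{\mC_i}$, $\imath_{\mD}$, and $\|\cdot\|_{2,0}$ are all semi-algebraic, the quadratic penalty terms and $\Phi_2$ are polynomial, and $\Phi_1$ is real analytic on the open set $\{Au_1+c>0\}\supseteq\mC_1$, so every ingredient of $H$ is definable in the o-minimal structure $\R_{\mathrm{an,exp}}$; hence $H$ is KL at every point of $\dom(\p H)$.

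With (i)--(iv) in hand, the abstract convergence theorem of Attouch--Bolte--Redont--Soubeyran delivers both the finite length property \eqref{FiniteLength} and convergence of $\{x^k\}$ to a critical point of $H$. The step I expect to be most delicate is the subgradient bound for the $v$-update: the generalized hard-thresholding formula \eqref{GeneralHardThresh} picks a particular element of the limiting subdifferential of the discontinuous quasi-norm $\|\cdot\|_{2,0}$, and one must check that this element, together with the normal-cone contributions from $\mC_i$ and $\mD$ arising in the other four blocks, can be assembled coherently into a single vector of $\p H(x^{k+1})$ of size $O(\|x^{k+1}-x^k\|_2)$. This requires a careful block-by-block accounting of subgradients, but it is essentially mechanical given the closed-form structure of the updates \eqref{WupdateClosed}--\eqref{VupdateClosed}.
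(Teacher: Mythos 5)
Your proposal follows essentially the same route as the paper: the authors likewise invoke the KL-based proximal alternating minimization framework (via \cite[Theorem 3.7]{C.Bao2016}, which is built on the Attouch--Bolte--Redont--Soubeyran machinery you cite), note that $H$ is KL and that $\Phi_1$ is analytic, proper, lsc, strictly convex and coercive, and then reduce everything to boundedness of the iterates. The one verification they carry out in detail --- boundedness of $v^k$ by induction using the hard-thresholding formula \eqref{VupdateClosed} and the compactness of $\mC_1$, $\mC_2$, $\mD$ --- is exactly the argument you sketch in your point (iii).
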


The proof of \cref{Th1} is similar to the framework given in \cite[Theorem 3.7.]{C.Bao2016}. In fact, $H(x)$ defined as in \eqref{ObjectReform} satisfies KL property, and noting that $\Phi_1(x_1)$ is analytic, proper, lsc, strictly convex and coercive \cite{M.A.T.Figueiredo2010}, it is not hard to see that $H(x)$ satisfies the first condition in \cite[Theorem 3.7.]{C.Bao2016}. (See \cite{C.Bao2016} for the details). Hence, the proof is completed provided that we show that the sequence $\{x^k:k\in\N\}$ generated by \cref{Alg1} is bounded. In fact, since the constraints $\mC_1$, $\mC_2$ and $\mD$ are compact sets, it suffices to show that $v^k$ is bounded.

\begin{lemma}\label{Lemma1} Assume that A1-A3 hold. Let $H(x)$ be the objective function defined as in \eqref{ObjectReform}. For  $\{x^k:k\in\N\}$ generated by \cref{Alg1}, there exist $R_1$, $R_2>0$ such that $\|v_1^k\|_2\leq R_1$ and $\|v_2^k\|_2\leq R_2$ for all $k\geq0$.
\end{lemma}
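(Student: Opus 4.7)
The plan is to bound $v_i^k$ inductively by exploiting the closed-form update in \eqref{VupdateClosed}, together with the orthogonality of $W_i^k$ and the compactness of $\mC_i$.

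First I would set up uniform bounds on the inputs that drive the $v$-update. Since $\mC_i = [0,a_i]^N$ is compact, $\|u_i^{k+1}\|_2 \leq a_i\sqrt{N} =: M_i$ for every $k$. Because the constraint $W_i^T W_i = I$ is preserved at every iteration by the closed-form SVD update \eqref{WupdateClosed}, $W_i^{k+1}$ is an isometry on $\R^N$, so
\begin{equation*}
\|W_i^{k+1} u_i^{k+1}\|_2 = \|u_i^{k+1}\|_2 \leq M_i.
\end{equation*}
Translating back to the matrix formulation, the same bound holds for $\|(D_i^{k+1})^T G_i^{k+1}\|_F$.

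Next I would observe that the generalized hard thresholding \eqref{GeneralHardThresh} is row-wise non-expansive in Frobenius norm: each row of its output is either the corresponding row of its input or the zero row. Consequently, from \eqref{VupdateClosed},
\begin{equation*}
\|v_i^{k+1}\|_2 \;\leq\; \left\|\frac{\mu_i (D_i^{k+1})^T G_i^{k+1} + \gamma^k v_i^k}{\mu_i + \gamma^k}\right\|_2 \;\leq\; \frac{\mu_i M_i + \gamma^k \|v_i^k\|_2}{\mu_i + \gamma^k}.
\end{equation*}
The right-hand side is a convex combination of $M_i$ and $\|v_i^k\|_2$, so it lies below $\max\{M_i, \|v_i^k\|_2\}$ for every admissible $\gamma^k$ (in particular for all $\gamma^k \in [L,U]$ from A3).

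Finally I would close the argument by induction: set $R_i := \max\{\|v_i^0\|_2, M_i\}$. The base case is immediate, and the convex-combination bound above shows $\|v_i^{k+1}\|_2 \leq R_i$ whenever $\|v_i^k\|_2 \leq R_i$. The main (minor) obstacle is simply to verify the row-wise non-expansiveness of $\mT_{\lambda,w}$ carefully, since the thresholding couples the two channels through $\sum_i w_i |U_{i,j}|^2$; however once one notes that each row is either preserved intact or zeroed, the channel-wise Frobenius bound follows directly. The lemma then gives boundedness of $\{v^k\}$, which combined with the compactness of $\mC_1, \mC_2, \mD$ yields boundedness of the full sequence $\{x^k\}$ needed for \cref{Th1}.
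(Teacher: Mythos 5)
Your proposal is correct and follows essentially the same argument as the paper: bound $\|u_i^k\|_2$ by $a_i\sqrt{N}$ via the compact constraint sets, use the isometry $W_i^TW_i=I$ and the non-expansiveness of the hard thresholding operator to obtain a convex-combination bound $\|v_i^{k+1}\|_2\leq\f{\mu_i}{\mu_i+\gamma^k}\|W_i^{k+1}u_i^{k+1}\|_2+\f{\gamma^k}{\mu_i+\gamma^k}\|v_i^k\|_2$, and close by induction. The only cosmetic difference is that the paper also bounds the base case $\|v_i^0\|_2\leq a_i\sqrt{N}$ through the thresholding in the initialization, so it can take $R_i=a_i\sqrt{N}$ directly, whereas you fold $\|v_i^0\|_2$ into the definition of $R_i$; both are valid.
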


\begin{proof} Note that the constraint on $u_1$ and $u_2$ leads to
\begin{align*}
\|u_1^k\|_2\leq a_1\sqrt{N}~~~~\text{and}~~~~\|u_2^k\|_2\leq a_2\sqrt{N}
\end{align*}
for all $k\geq0$. If we choose $R_1=a_1\sqrt{N}$ and $R_2=a_2\sqrt{N}$, then the proof is completed by mathematical induction. For $k=0$, \eqref{GeneralHardThresh} implies
\begin{align*}
\|v_1^0\|_2&\leq\|W_1^0u_1^0\|_2=\|u_1^0\|_2\leq R_1\\
\|v_2^0\|_2&\leq\|W_2^0u_2^0\|_2=\|u_2^0\|_2\leq R_2
\end{align*}
where the equality comes from the fact that $W_1^0$ and $W_2^0$ are tight frames.

For the mathematical induction, we assume that $\|v_1^k\|_2\leq R$ and $\|v_2^k\|_2\leq R$ for $k\geq 0$. Again, by \eqref{GeneralHardThresh}, we have
\begin{align*}
\|v_1^{k+1}\|_2\leq\left\|\f{\mu_1 W_1^{k+1}u_1^{k+1}+\gamma^kv_1^k}{\mu_1+\gamma^k}\right\|_2&\leq\f{\mu_1}{\mu_1+\gamma^k}\|W_1^{k+1}u_1^{k+1}\|_2+\f{\gamma^k}{\mu_1+\gamma^k}\|v_1^k\|_2\leq R_1\\
\|v_2^{k+1}\|_2\leq\left\|\f{\mu_2 W_2^{k+1}u_2^{k+1}+\gamma^kv_2^k}{\mu_2+\gamma^k}\right\|_2&\leq\f{\mu_2}{\mu_2+\gamma^k}\|W_2^{k+1}u_2^{k+1}\|_2+\f{\gamma^k}{\mu_2+\gamma^k}\|v_2^k\|_2\leq R_2
\end{align*}
from the fact that $W_1^k$, $W_2^k\in\mD$ for all $k\in\N$. This completes the proof.
\end{proof}

\section{Numerical Results}\label{Results}

In this section, we present some experimental results to compare our JSTF model \eqref{Proposed} and JSDDFT model \eqref{DDTFPETMRI} with the several existing methods. We choose to compare with the the following analysis based individual reconstruction models:
 \begin{align}
&\min_{u_1\in\mC_1}~\Phi_1(u_1)+\lambda_1\|Wu_1\|_1, \label{AnaPET}\\
&\min_{u_2\in\mC_2}~\Phi_2(u_2)+\lambda_2\|Wu_2\|_1, \label{AnaMRI}
\end{align}
both of which are solved by the split Bregman method \cite{J.F.Cai2009/10}, the data driven tight frame (DDTF) individual reconstruction models:
\begin{align}
&\min_{u_1\in\mC_1,v_1,W_1}~\Phi_1(u_1)+\f{\mu_1}{2}\|W_1u_1-v_1\|_2^2+\lambda_1\|v_1\|_0~~\text{subject to}~~W_1^TW_1=I, \label{DDTFPET}\\
&\min_{u_2\in\mC_2,v_2,W_2}~\Phi_2(u_2)+\f{\mu_2}{2}\|W_2u_2-v_2\|_2^2+\lambda_2\|v_2\|_0~~\text{subject to}~~W_2^TW_2=I, \label{DDTFMRI}
\end{align}
solved by the PAM algorithm, and the quadratic parallel level set (QPLS) method in \cite{M.J.Ehrhardt2015}. We also compare with the JAnal model \eqref{JAnalPETMRI} solved by the split Bregman method, as a replacement of the JTV model \eqref{JTVPETMRI}.

The experiments are conducted with $256\times 256$ PET and MRI images taking values in $[0,1]$, which are available at Harvard Whole Brain Atlas webpage\footnote{http://www.med.harvard.edu/AANLIB/home.html}. Throughout this paper, we set both $\mC_1$ and $\mC_2$ are $[0,1]^N$. Note that MRI images are classified into the positron density (PD) weighted image, the T1 weighted image, and the T2 weighted image according to the pulse sequence design \cite{E.M.Haacke1999} (See \cref{OriginalImages}). In this sequel, we perform experiments using different MRI images for different $u_2$, which we shall refer to PET-PD, PET-T1, and PET-T2 respectively. To measure the quality of restored images, we compute the relative error, the PSNR, and the correlation between the reconstructed image $\wt{u}_i$ and the true image $u_i$ respectively defined as
\begin{align*}
\mathrm{RelErr}(u_i,\wt{u}_i)&=\f{\|u_i-\wt{u}_i\|_2}{\|u_i\|_2},\\
\mathrm{PSNR}(u_i,\wt{u}_i)&=-10\log_{10}\f{\|u_i-\wt{u}_i\|_2^2}{N},\\
\mathrm{Corr}(u_i,\wt{u}_i)&=\f{\la u_i-\overline{u}_i,\wt{u}_i-\overline{\wt{u}}_i\ra}{\|u_i-\overline{u}_i\|_2\|\wt{u}_i-\overline{\wt{u}}_i\|_2}
\end{align*}
where $\overline{u}_i$ and $\overline{\wt{u}}_i$ denote the mean of $u_i$ and $\wt{u}_i$ respectively.

\begin{figure}[ht]
	\centering
	\begin{tabular}{cccc}
		\begin{minipage}{3cm}
			\includegraphics[width=3cm]{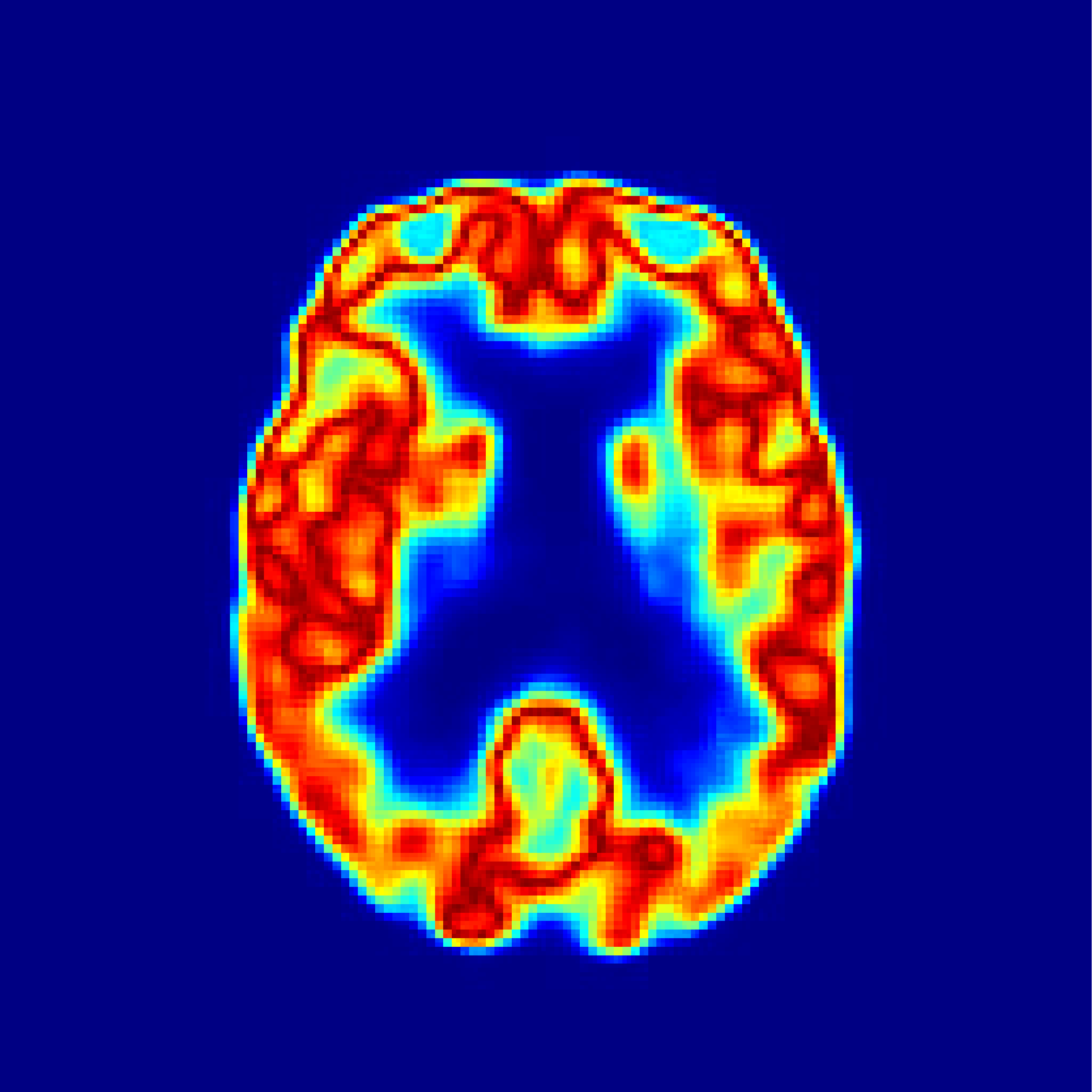}
		\end{minipage}&\hspace{-0.45cm}
		\begin{minipage}{3cm}
			\includegraphics[width=3cm]{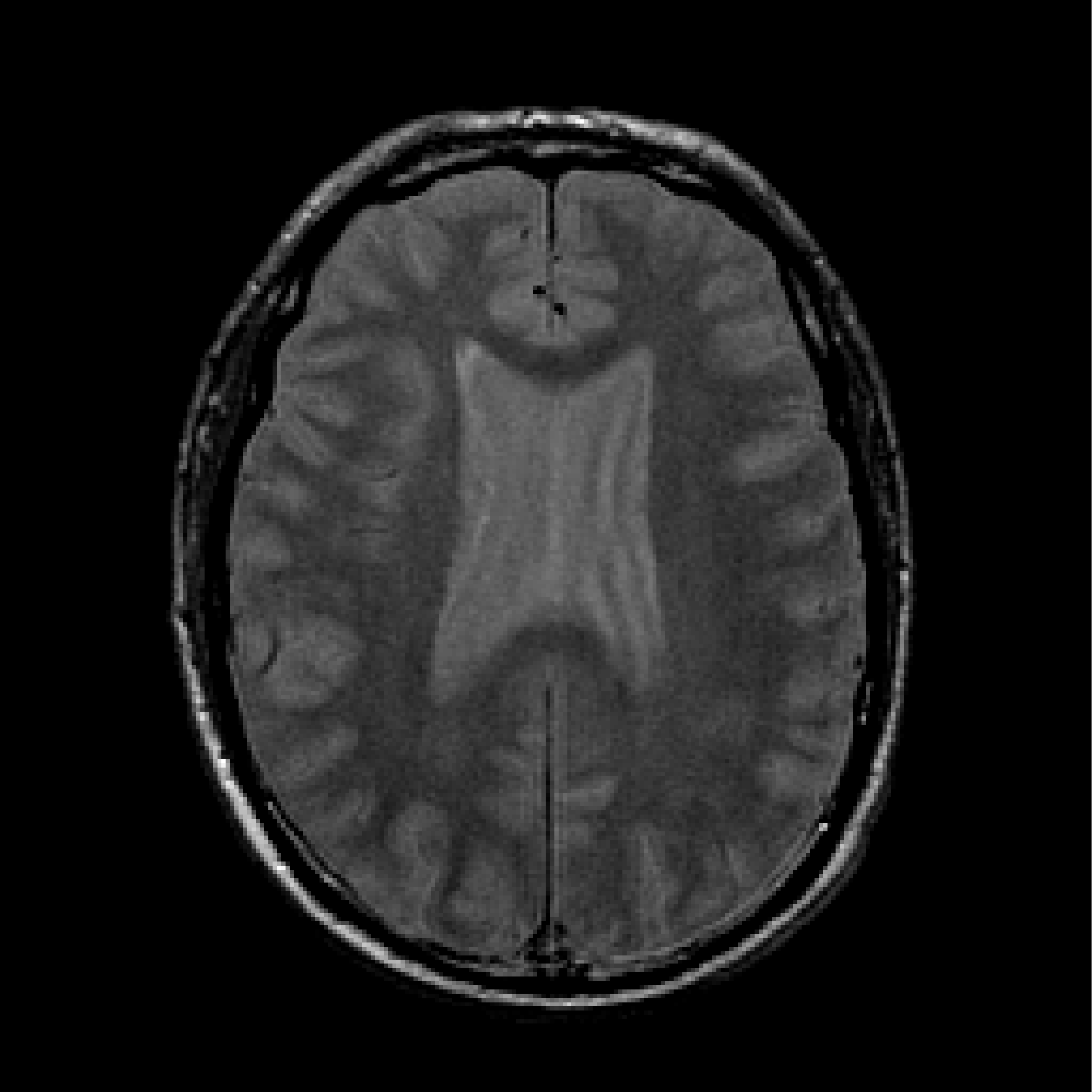}
		\end{minipage}&\hspace{-0.45cm}
		\begin{minipage}{3cm}
			\includegraphics[width=3cm]{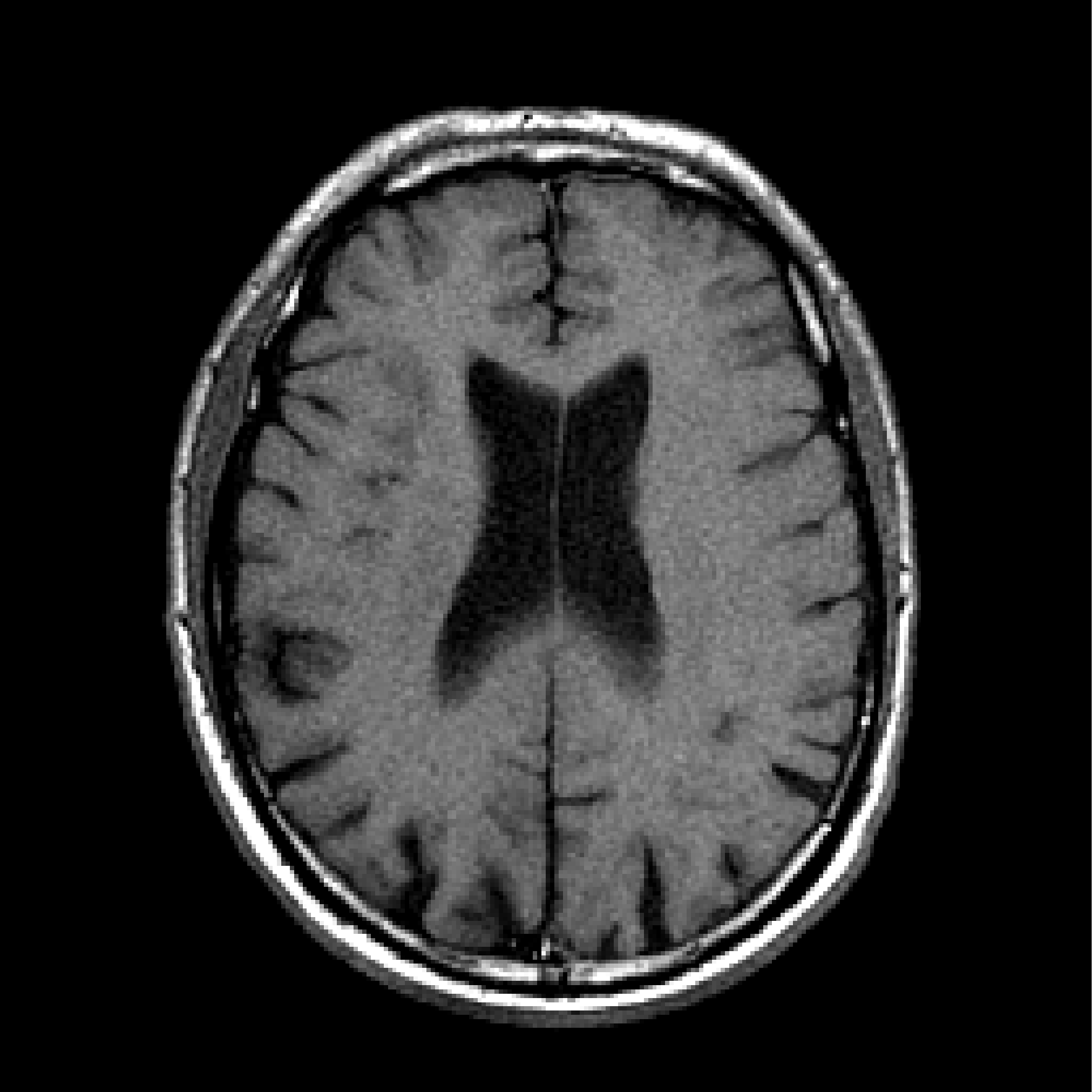}
		\end{minipage}&\hspace{-0.45cm}
		\begin{minipage}{3cm}
			\includegraphics[width=3cm]{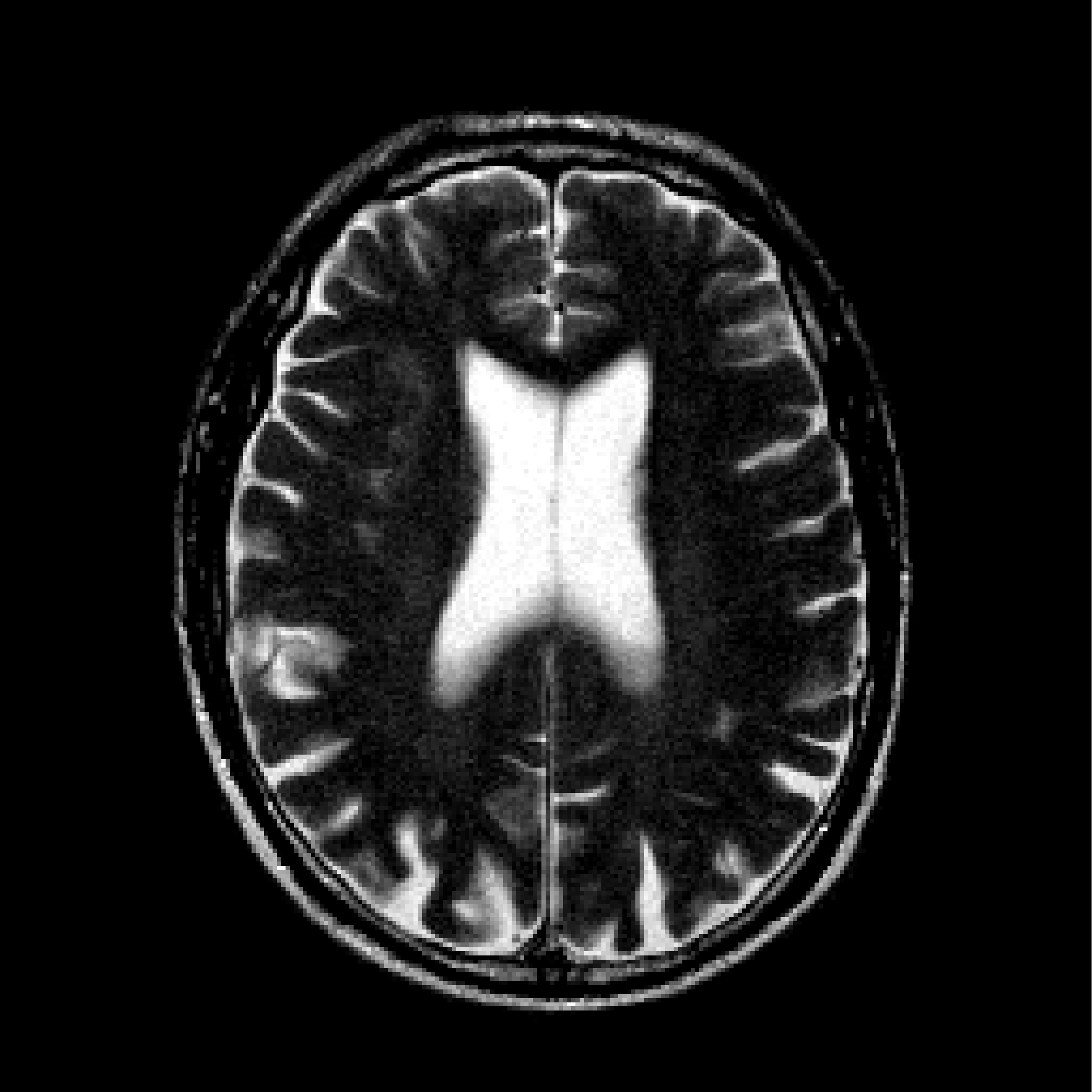}
		\end{minipage}\\
		{\small{PET Image}}&\hspace{-0.45cm}
		{\small{PD Image}}&\hspace{-0.45cm}
		{\small{T1 Image}}&\hspace{-0.45cm}
		{\small{T2 Image}}
	\end{tabular}
	\caption{True PET image (first column) and true MRI images (second to last columns). We use different MRI images for $u_2$.}\label{OriginalImages}
\end{figure}

To synthesize the data, we generate the forward PET operator $A$ as described in \cite{E.M.Bardsley2010}, and we set $c=1$ in all cases. To generate $256\times256$ PET data $f$ with Poisson noise, we use the MATLAB built-in function ``imnoise($\cdot$,`poisson')''by scaling $Au_1+c$ with a suitable factor before applying ``imnoise'', and then scaling it back with the same factor. More precisely,
\begin{align*}
f=\text{factor}*\text{imnoise}((Au_1+c)/\text{factor},\text{`poisson'})
\end{align*}
and we set factor to be $2*10^8$ in all cases. Note that the larger ``factor'' is, the larger noise level is. To generate $g$, we first generate $\msF_p=R_{\Lambda}\msF$, where $R_{\Lambda}$ is the projection on the known frequency region $\Lambda$, and $\msF$ is the unitary discrete Fourier transform. Here, we choose $R_\Lambda$ to be the sampling along the $30$ radial lines and the $10\%$ random sampling described in \cite{B.Deka2015}. The gaussian noise with standard deviation $0.05$ is also added to both real and imaginary part of $\msF_pu_2$.

In all experiments, we choose the same initializations for the fair comparison; $u_1^0$ is obtained by the Expectation-Maximization algorithm \cite{L.A.Shepp1982}, and $u_2^0$ is obtained by the inverse discrete Fourier transform with zero filling (See \cref{Initializations1}). For the individual reconstruction models \eqref{AnaPET} and \eqref{AnaMRI}, the JAnal Model \eqref{JAnalPETMRI} as well as our JSTF model \eqref{Proposed}, we use one level piecewise cubic B-spline wavelet frame transformation. For JSDDTF model \eqref{DDTFPETMRI} as well as DDTF individual reconstruction models \eqref{DDTFPET} and \eqref{DDTFMRI}, we use $8\times 8$ undecimated discrete cosine transform filters \cite{Strang1999} for the initial guess of \eqref{FraCoeffIni}. In all cases, we set $\kappa=1$. In \eqref{Proposed}, \eqref{DDTFPETMRI}, \eqref{DDTFPET}, and \eqref{DDTFMRI}, we set $\mu_1=0.05$, $\mu_2=1$, $\alpha_1^k=\alpha_2^k=0.001$, $\beta_1^k=\beta_2^k=\gamma^k=0.00005$, and $\rho_1^j=\rho_2^j=0.5$ in all cases. All parameters related to the (joint) sparsity are manually chosen so that we obtain the optimal restoration results, especially the optimal results in PET image for the joint reconstruction models. In addition, as the wavelet frame systems consist of low pass filter and high pass filters and the low pass filter coefficients are not sparse in general \cite{J.F.Cai2012}, we follow the convention that we do not penalize the frame coefficients corresponding to the low pass filter.

\begin{figure}[htp!]
\centering
\begin{tabular}{cccc}
&\hspace{-0.45cm}
\begin{minipage}{3cm}
\includegraphics[width=3cm]{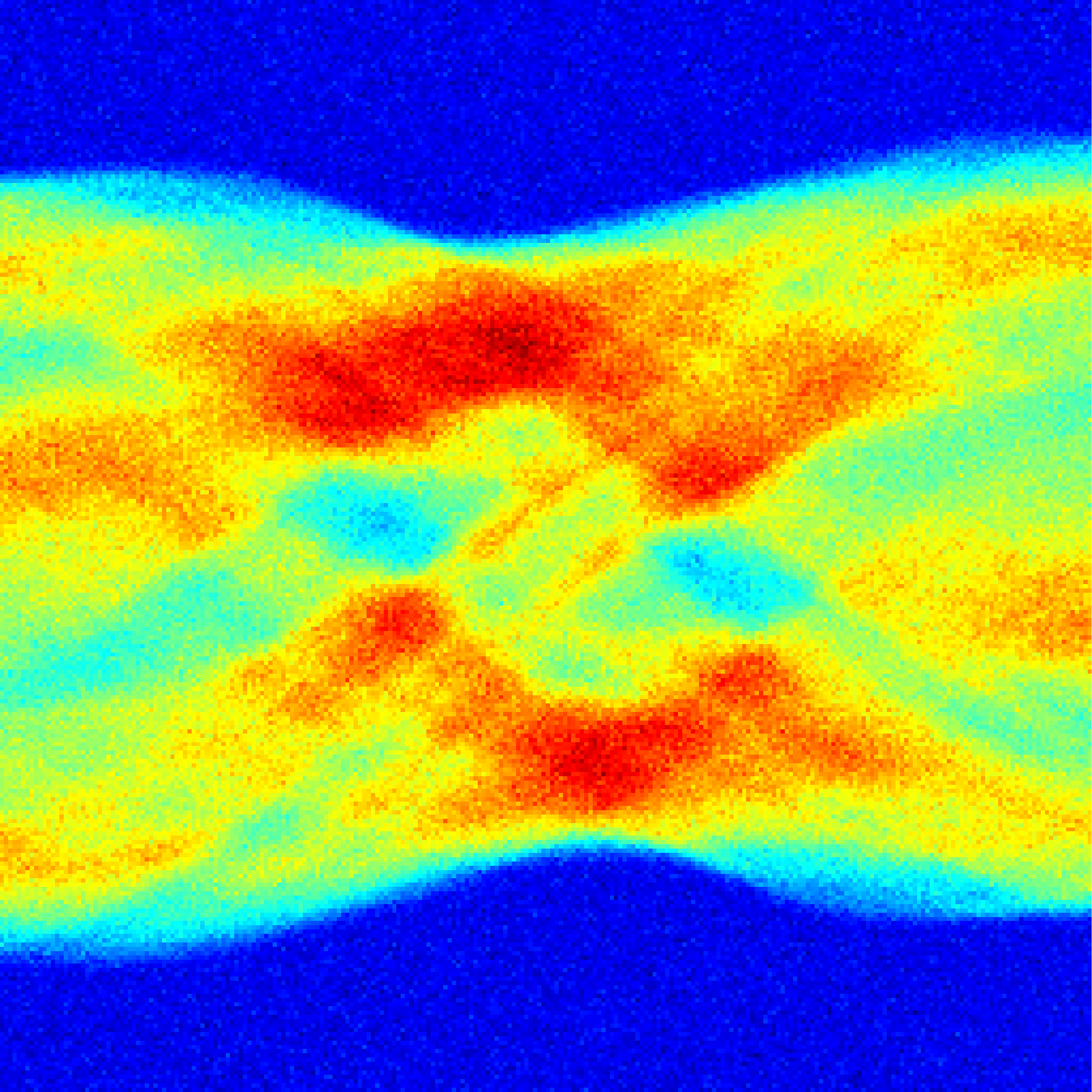}
\end{minipage}&\hspace{-0.45cm}
\begin{minipage}{3cm}
\includegraphics[width=3cm]{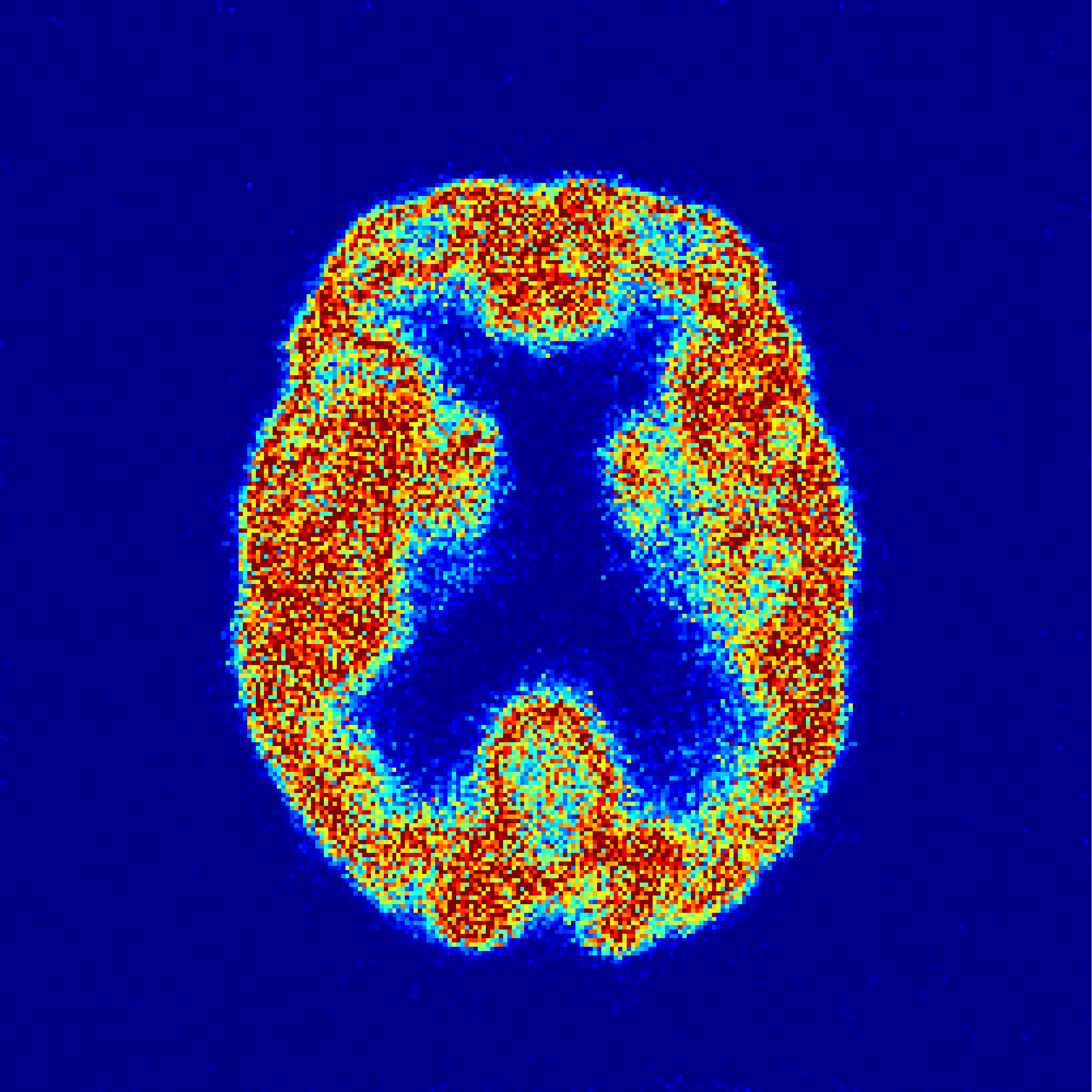}
\end{minipage}&\hspace{-0.45cm}
\vspace{0.25em}\\
\begin{minipage}{3cm}
\includegraphics[width=3cm]{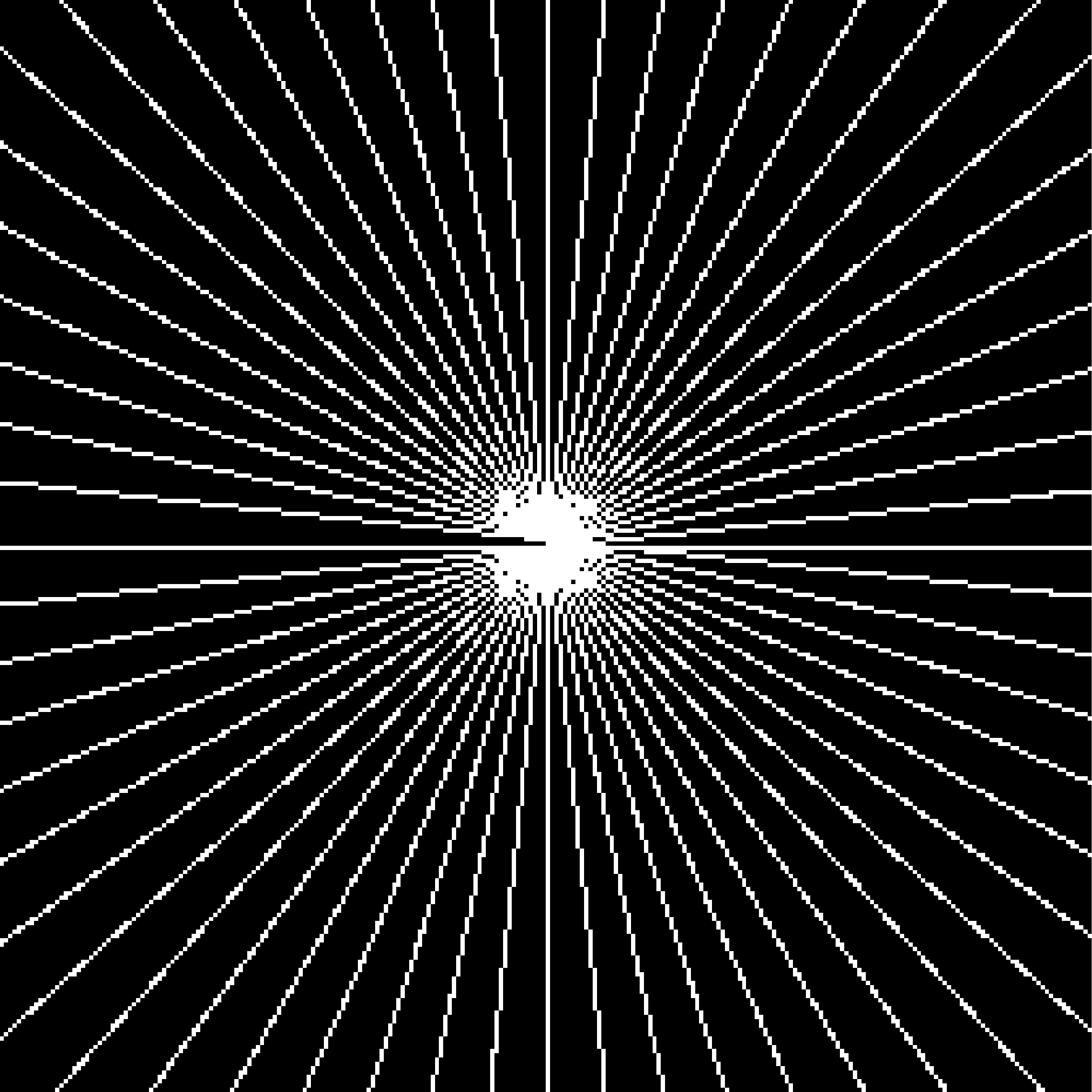}
\end{minipage}&\hspace{-0.45cm}
\begin{minipage}{3cm}
\includegraphics[width=3cm]{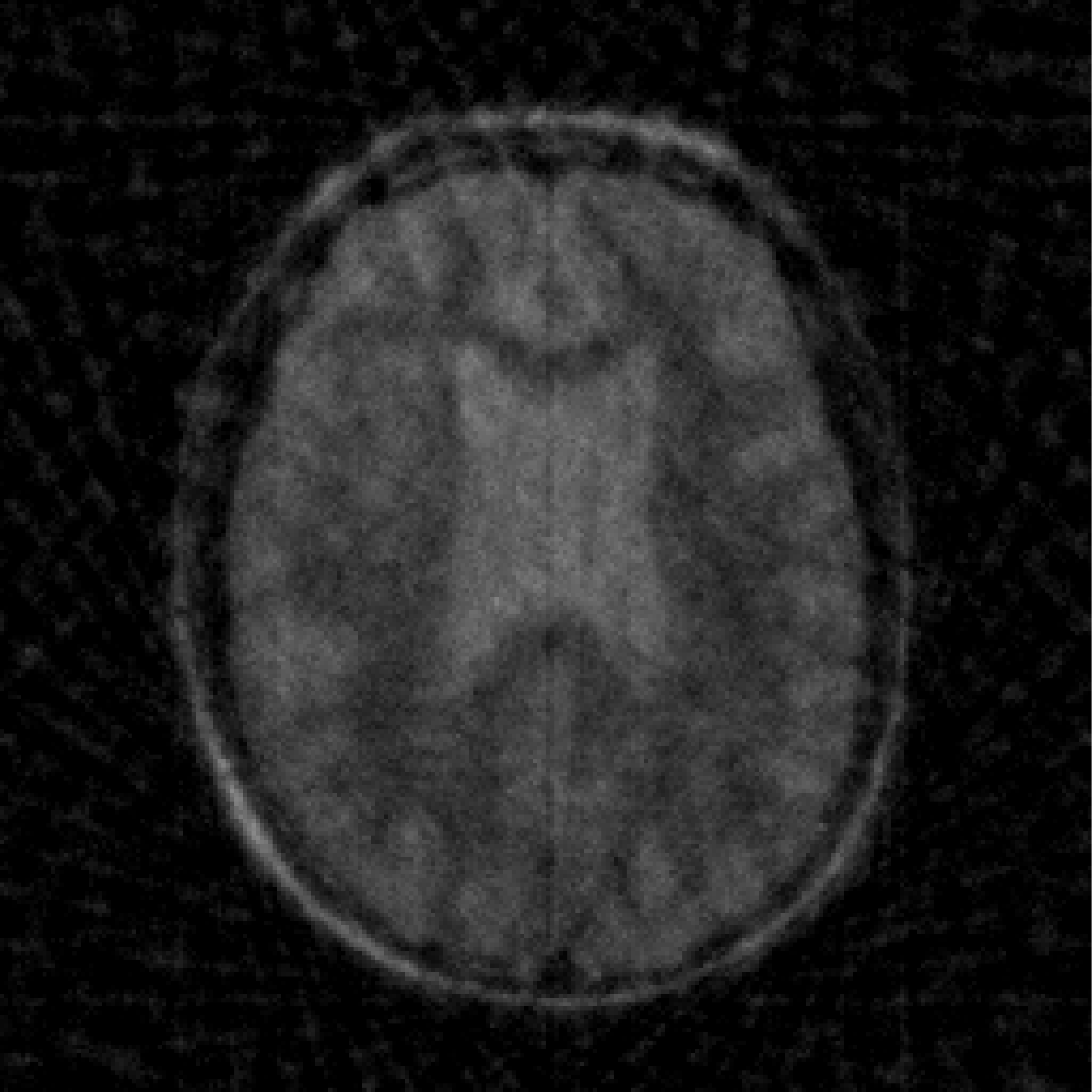}
\end{minipage}&\hspace{-0.45cm}
\begin{minipage}{3cm}
\includegraphics[width=3cm]{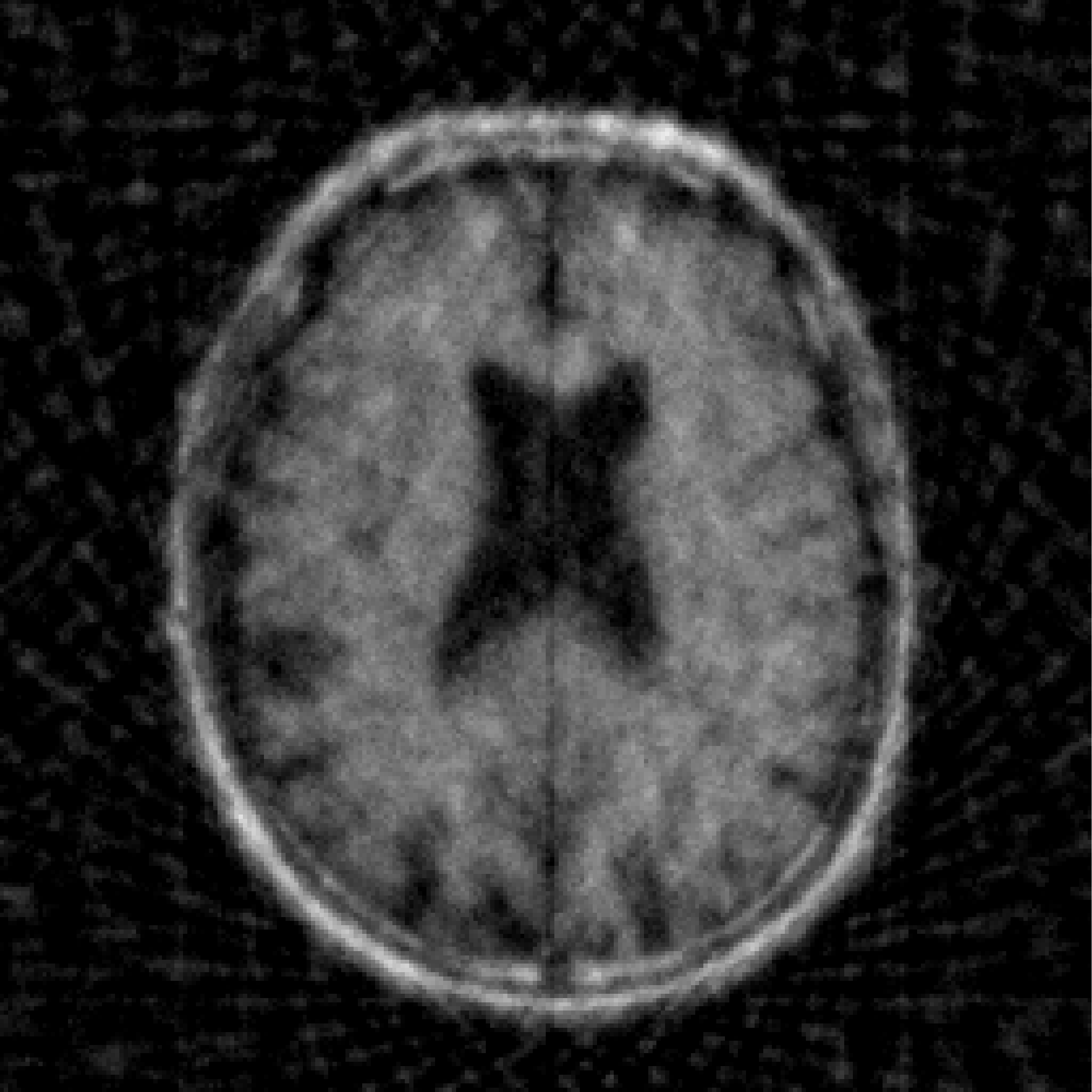}
\end{minipage}&\hspace{-0.45cm}
\begin{minipage}{3cm}
\includegraphics[width=3cm]{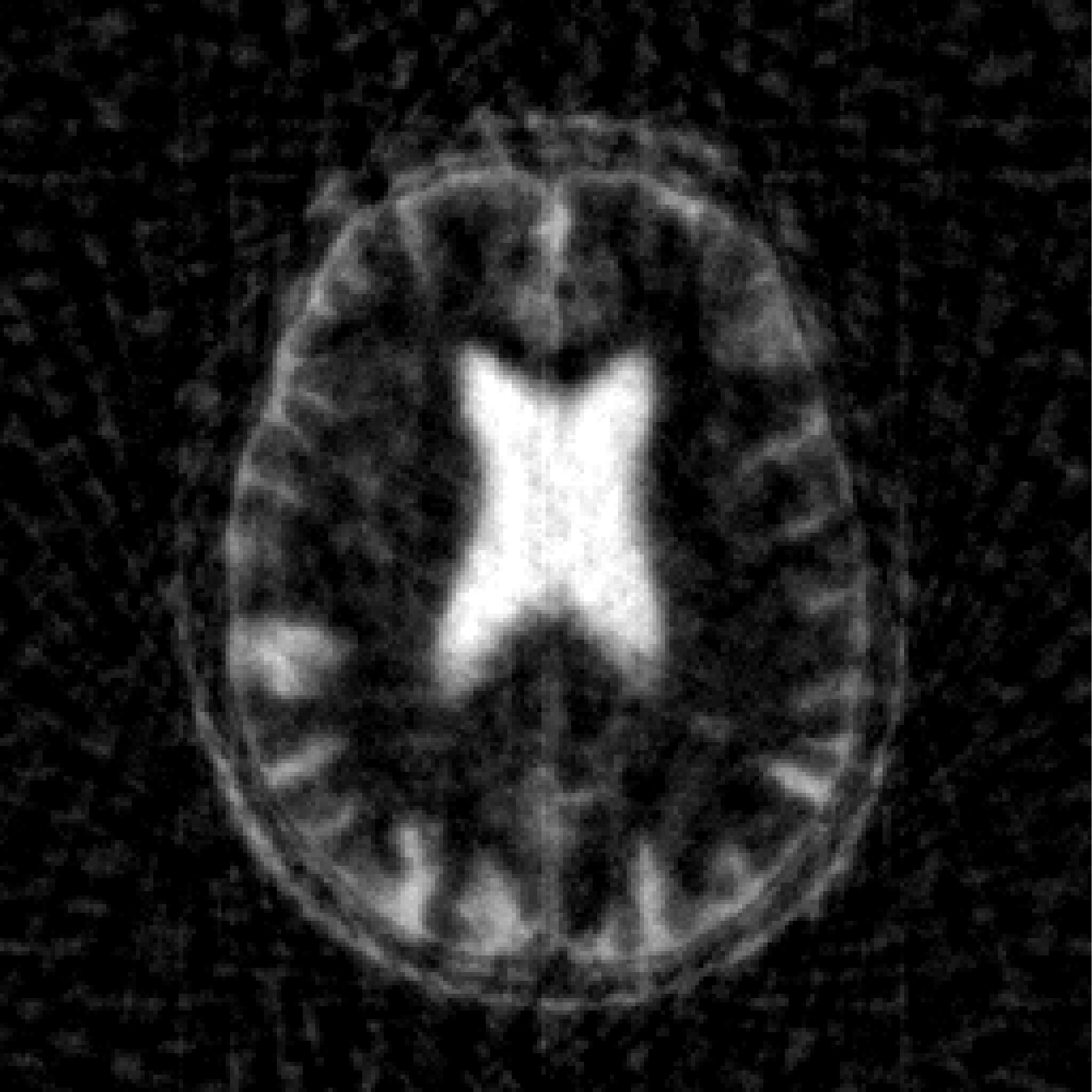}
\end{minipage}\vspace{0.25em}\\
\begin{minipage}{3cm}
\includegraphics[width=3cm]{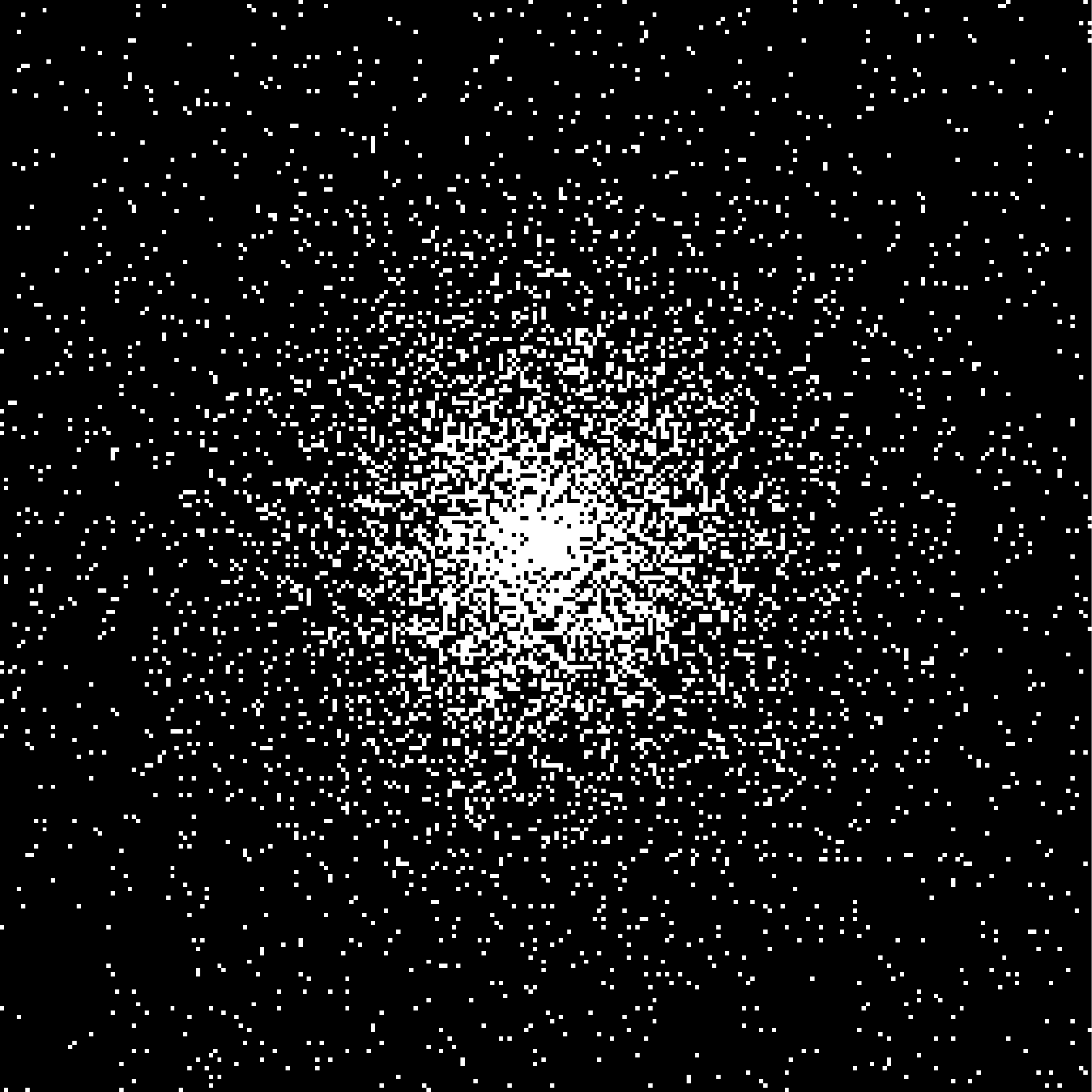}
\end{minipage}&\hspace{-0.45cm}
\begin{minipage}{3cm}
\includegraphics[width=3cm]{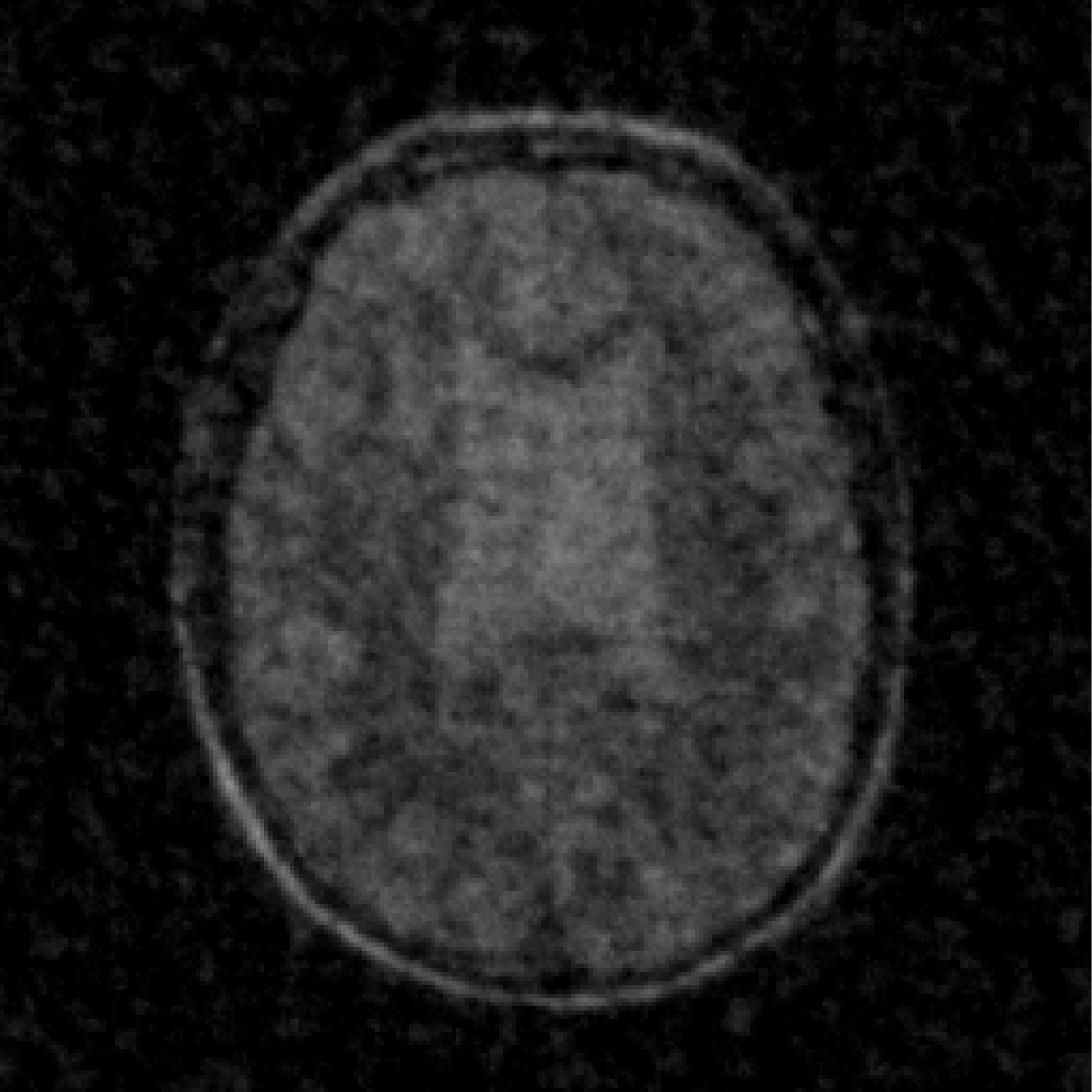}
\end{minipage}&\hspace{-0.45cm}
\begin{minipage}{3cm}
\includegraphics[width=3cm]{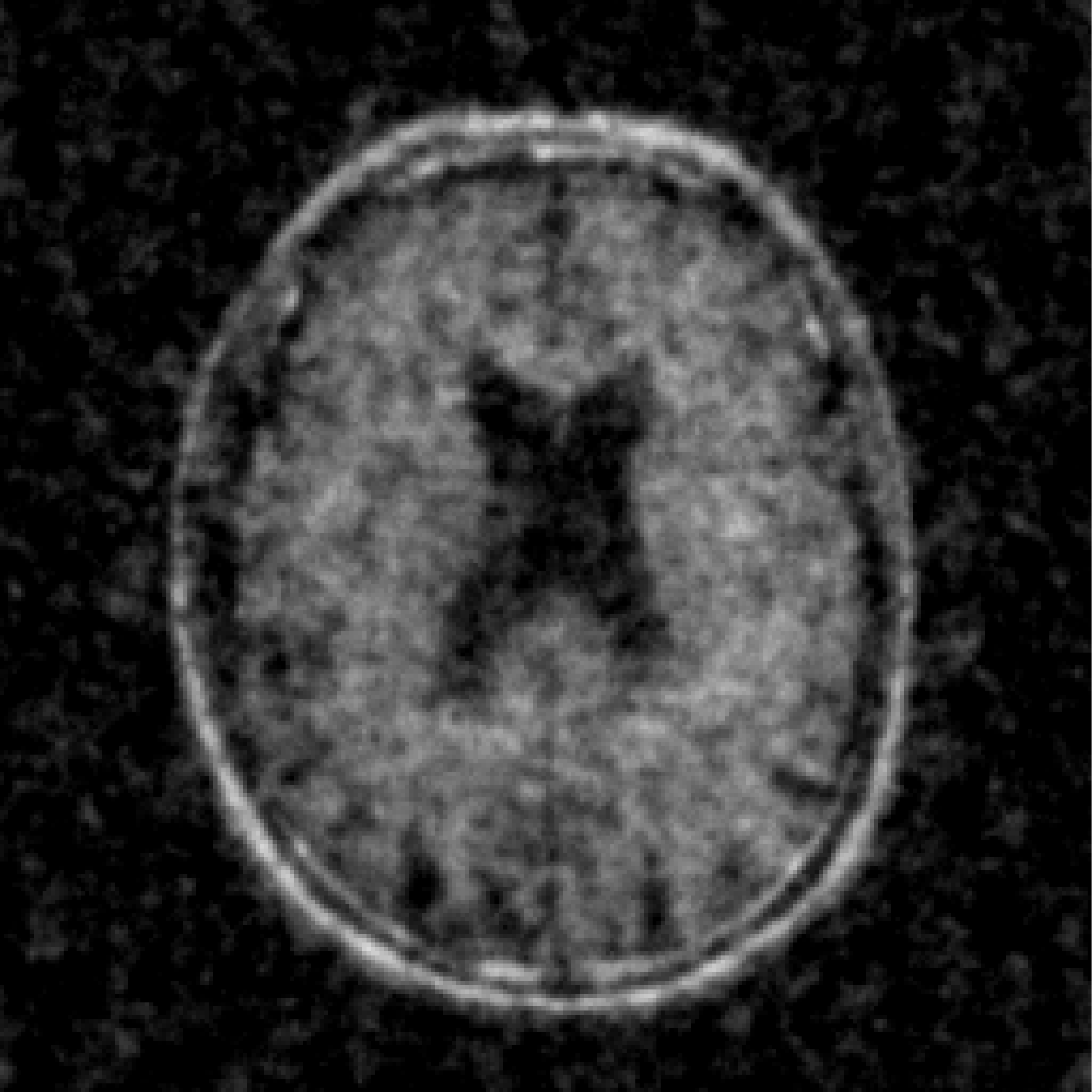}
\end{minipage}&\hspace{-0.45cm}
\begin{minipage}{3cm}
\includegraphics[width=3cm]{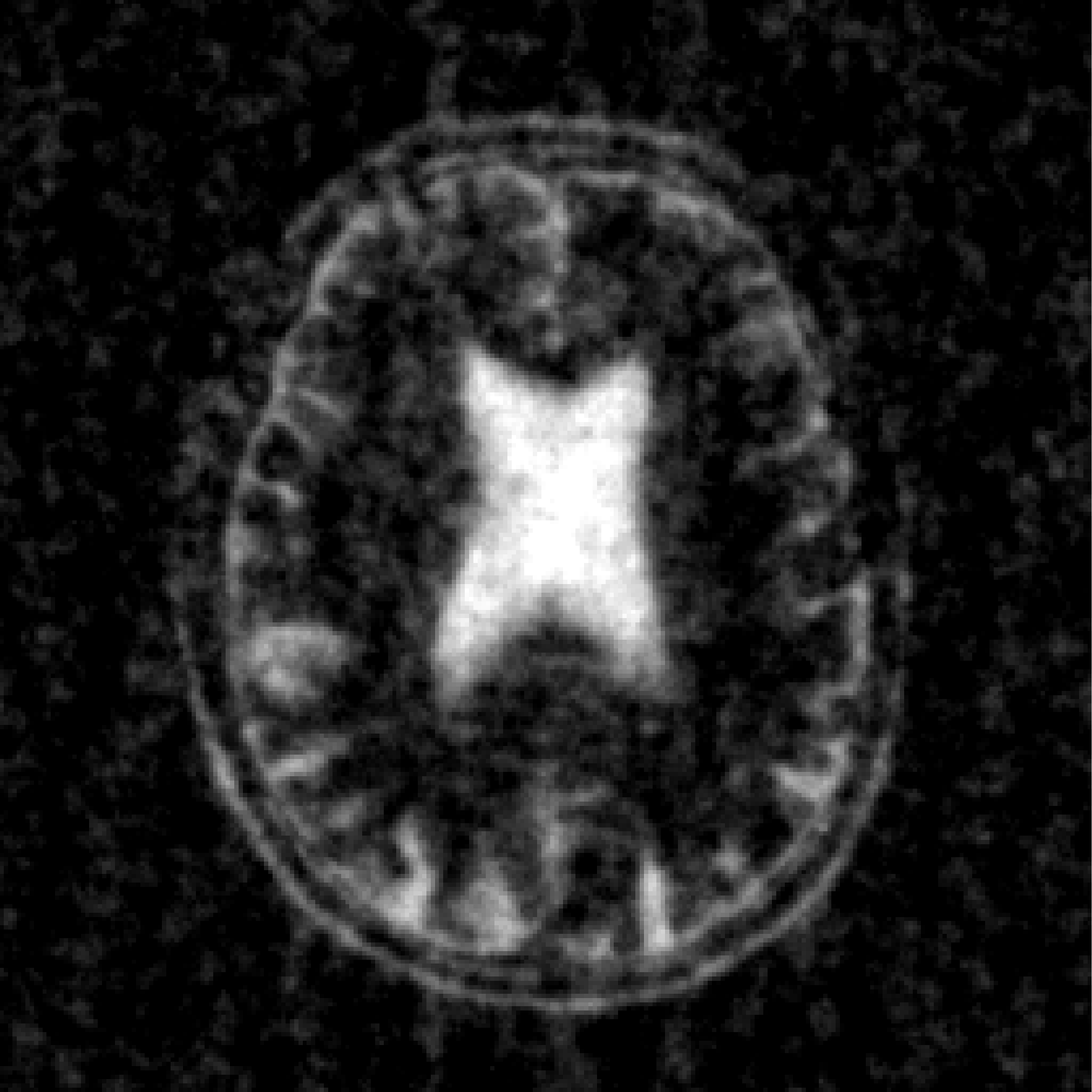}
\end{minipage}
\end{tabular}
\caption{Synthesized data and initializations. The first row describes the synthesized PET data $f$ (left) and $u_1^0$ (right). The second row depicts the radial sampling projection $R_\Lambda$, followed by $u_2^0$ for PET-PD, PET-T1 and PET-T2 respectively. The third row shows the random sampling projection $R_\Lambda$, followed by the initializations of $u_2^0$ for PET-PD, PET-T1 and PET-T2 respectively.}\label{Initializations1}
\end{figure}

\begin{table}[htp!]\tiny
\centering
	\begin{tabular}{|c|c|c|c|c|c|c|c|c|c|c|c|}
		\hline
		\multicolumn{10}{|c|}{Radial Sampling}\\ \hline
		\multicolumn{2}{|c|}{Images}&Indices&Initial&Analysis&DDTF&QPLS&JAnal&JSTF&JSDDTF\\ \hline
		\multirow{6}{*}{PET-PD}&\multirow{3}{*}{PET}&RelErr&$0.3156$&$0.1112$&$0.0939$&$0.1040$&$0.0945$&$0.0937$&$\textbf{0.0905}$\\ 
		&&PSNR&$18.8555$&$27.9170$&$29.3820$&$28.5011$&$29.3297$&$29.4054$&$\textbf{29.7059}$\\ 
		&&Corr&$0.9384$&$0.9918$&$0.9942$&$0.9928$&$0.9942$&$0.9943$&$\textbf{0.9946}$\\ \cline{2-10}
		&\multirow{3}{*}{MRI}&RelErr&$0.3448$&$0.2148$&$0.2129$&$0.2878$&$0.2732$&$0.2022$&$\textbf{0.1929}$\\ 
		&&PSNR&$24.0252$&$28.1347$&$28.2127$&$25.5958$&$26.0482$&$28.6616$&$\textbf{29.0692}$\\ 
		&&Corr&$0.9018$&$0.9633$&$0.9648$&$0.9350$&$0.9410$&$0.9678$&$\textbf{0.9707}$\\ \hline
		\multirow{6}{*}{PET-T1}&\multirow{3}{*}{PET}&RelErr&$0.3156$&$0.1112$&$0.0939$&$0.1034$&$0.0952$&$0.0934$&$\textbf{0.0893}$\\ 
		&&PSNR&$18.8555$&$27.9170$&$29.3820$&$28.5453$&$29.2649$&$29.4358$&$\textbf{29.8250}$\\ 
		&&Corr&$0.9384$&$0.9918$&$0.9942$&$0.9929$&$0.9941$&$0.9943$&$\textbf{0.9948}$\\ \cline{2-10}
		&\multirow{3}{*}{MRI}&RelErr&$0.3571$&$0.1992$&$0.1809$&$0.2266$&$0.2146$&$0.1753$&$\textbf{0.1663}$\\ 
		&&PSNR&$19.9312$&$25.0007$&$25.8395$&$23.8804$&$24.3556$&$26.1104$&$\textbf{26.5667}$\\ 
		&&Corr&$0.8967$&$0.9695$&$0.9752$&$0.9601$&$0.9641$&$0.9764$&$\textbf{0.9787}$\\ \hline
		\multirow{6}{*}{PET-T2}&\multirow{3}{*}{PET}&RelErr&$0.3156$&$0.1112$&$0.0939$&$0.1069$&$0.0964$&$0.0923$&$\textbf{0.0881}$\\ 
		&&PSNR&$18.8555$&$27.9170$&$29.3820$&$28.2628$&$29.1616$&$29.5351$&$\textbf{29.9369}$\\ 
		&&Corr&$0.9384$&$0.9918$&$0.9942$&$0.9924$&$0.9940$&$0.9944$&$\textbf{0.9949}$\\ \cline{2-10}
		&\multirow{3}{*}{MRI}&RelErr&$0.3907$&$0.2415$&$0.2285$&$0.2678$&$0.2497$&$0.2108$&$\textbf{0.2023}$\\ 
		&&PSNR&$19.9442$&$24.1204$&$24.6029$&$23.2245$&$23.8311$&$25.3016$&$\textbf{25.6593}$\\ 
		&&Corr&$0.9005$&0.9635&$0.9676$&$0.9551$&$0.9608$&$0.9723$&$\textbf{0.9745}$\\ \hline\hline
		\multicolumn{10}{|c|}{Random Sampling}\\ \hline
		\multicolumn{2}{|c|}{Images}&Indices&Initial&Analysis&DDTF&QPLS&JAnal&JSTF&JSDDTF\\ \hline
		\multirow{6}{*}{PET-PD}&\multirow{3}{*}{PET}&RelErr&$0.3156$&$0.1112$&$0.0939$&$0.1027$&$0.0943$&$0.0933$&$\textbf{0.0906}$\\ 
		&&PSNR&$18.8555$&$27.9170$&$29.3820$&$28.6088$&$29.3456$&$29.4376$&$\textbf{29.6987}$\\ 
		&&Corr&$0.9384$&$0.9918$&$0.9942$&$0.9930$&$0.9942$&$0.9943$&$\textbf{0.9946}$\\ \cline{2-10}
		&\multirow{3}{*}{MRI}&RelErr&$0.3366$&$0.2081$&$0.2082$&$0.2729$&$0.2618$&$0.2003$&$\textbf{0.1919}$\\ 
		&&PSNR&$24.2355$&$28.4134$&$28.4054$&$26.0581$&$26.4169$&$28.7440$&$\textbf{29.1145}$\\ 
		&&Corr&$0.9064$&$0.9657$&$0.9663$&$0.9408$&$0.9452$&$0.9683$&$\textbf{0.9709}$\\ \hline
		\multirow{6}{*}{PET-T1}&\multirow{3}{*}{PET}&RelErr&$0.3156$&$0.1112$&$0.0939$&$0.1036$&$0.0950$&$0.0929$&$\textbf{0.0889}$\\ 
		&&PSNR&$18.8555$&$27.9170$&$29.3820$&$28.5293$&$29.2815$&$29.4772$&$\textbf{29.8588}$\\ 
		&&Corr&$0.9384$&$0.9918$&$0.9942$&$0.9929$&$0.9941$&$0.9943$&$\textbf{0.9948}$\\ \cline{2-10}
		&\multirow{3}{*}{MRI}&RelErr&$0.3712$&$0.1952$&$0.1785$&$0.2180$&$0.2093$&$0.1707$&$\textbf{0.1627}$\\ 
		&&PSNR&$19.5948$&$25.1765$&$25.9556$&$24.2185$&$24.5722$&$26.3422$&$\textbf{26.7584}$\\ 
		&&Corr&$0.8878$&$0.9709$&$0.9759$&$0.9629$&$0.9658$&$0.9777$&$\textbf{0.9797}$\\ \hline
		\multirow{6}{*}{PET-T2}&\multirow{3}{*}{PET}&RelErr&$0.3156$&$0.1112$&$0.0939$&$0.1072$&$0.0964$&$0.0928$&$\textbf{0.0882}$\\ 
		&&PSNR&$18.8555$&$27.9170$&$29.3820$&$28.2321$&$29.1553$&$29.4865$&$\textbf{29.9256}$\\ 
		&&Corr&$0.9384$&$0.9918$&$0.9942$&$0.9924$&$0.9939$&$0.9943$&$\textbf{0.9949}$\\ \cline{2-10}
		&\multirow{3}{*}{MRI}&RelErr&$0.4052$&$0.2357$&$0.2270$&$0.2551$&$0.2397$&$0.2089$&$\textbf{0.2016}$\\ 
		&&PSNR&$19.6260$&$24.3331$&$24.6594$&$23.6451$&$24.1873$&$25.3805$&$\textbf{25.6890}$\\ 
		&&Corr&$0.8925$&0.9654&$0.9680$&$0.9591$&$0.9638$&$0.9728$&$\textbf{0.9746}$\\ \hline
	\end{tabular}
	\caption{Comparison of relative errors, PSNR, and correlations.}\label{Table1}
\end{table}

\cref{Table1} summarizes relative errors, PSNR values, and correlations of the aforementioned five restoration models, and \cref{PETMRPD15RadialResults}-\cref{PETMRT215RandomResults} present visual comparisons of the results. We can see that both JSTF \eqref{Proposed} and JSDDTF model \eqref{DDTFPETMRI} consistently outperforms both the individual reconstruction models and the existing joint reconstruction models in \cite{M.J.Ehrhardt2015}. This verifies that there exists a correlation on image singularities between the two modality images, and exploiting this correlation results in the better reconstruction results. In addition, compared to the existing joint reconstruction models, we can see that our proposed models introduce less artifacts in both modality images, leading to the visual improvements over the existing joint reconstruction methods which are consistent with the improvements in indices. It is worth noting that our proposed model can improve the restoration qualities even using a static wavelet tight frame. This demonstrates that the improvements mainly come from simultaneously considering the different regularity of the different modality images and the joint sparsity. Meanwhile, we can see that the QPLS model and the JAnal model \eqref{JAnalPETMRI}, which only take the structural correlation into account, show the degradations in MRI restoration results compared to the independent reconstruction methods due to the artifacts. Finally, we list some zoom-in views in \cref{PETMRT215RadialResultsZoom} to illustrate that our models \eqref{Proposed} and \eqref{DDTFPETMRI} restore structures better than the existing methods.

\begin{figure}[htp!]
	\centering
	\begin{tabular}{cccc}
		\begin{minipage}{3cm}
			\includegraphics[width=3cm]{PET15Original.pdf}
		\end{minipage}&\hspace{-0.45cm}
\begin{minipage}{3cm}
			\includegraphics[width=3cm]{PET15EM.pdf}
		\end{minipage}&\hspace{-0.45cm}
		\begin{minipage}{3cm}
			\includegraphics[width=3cm]{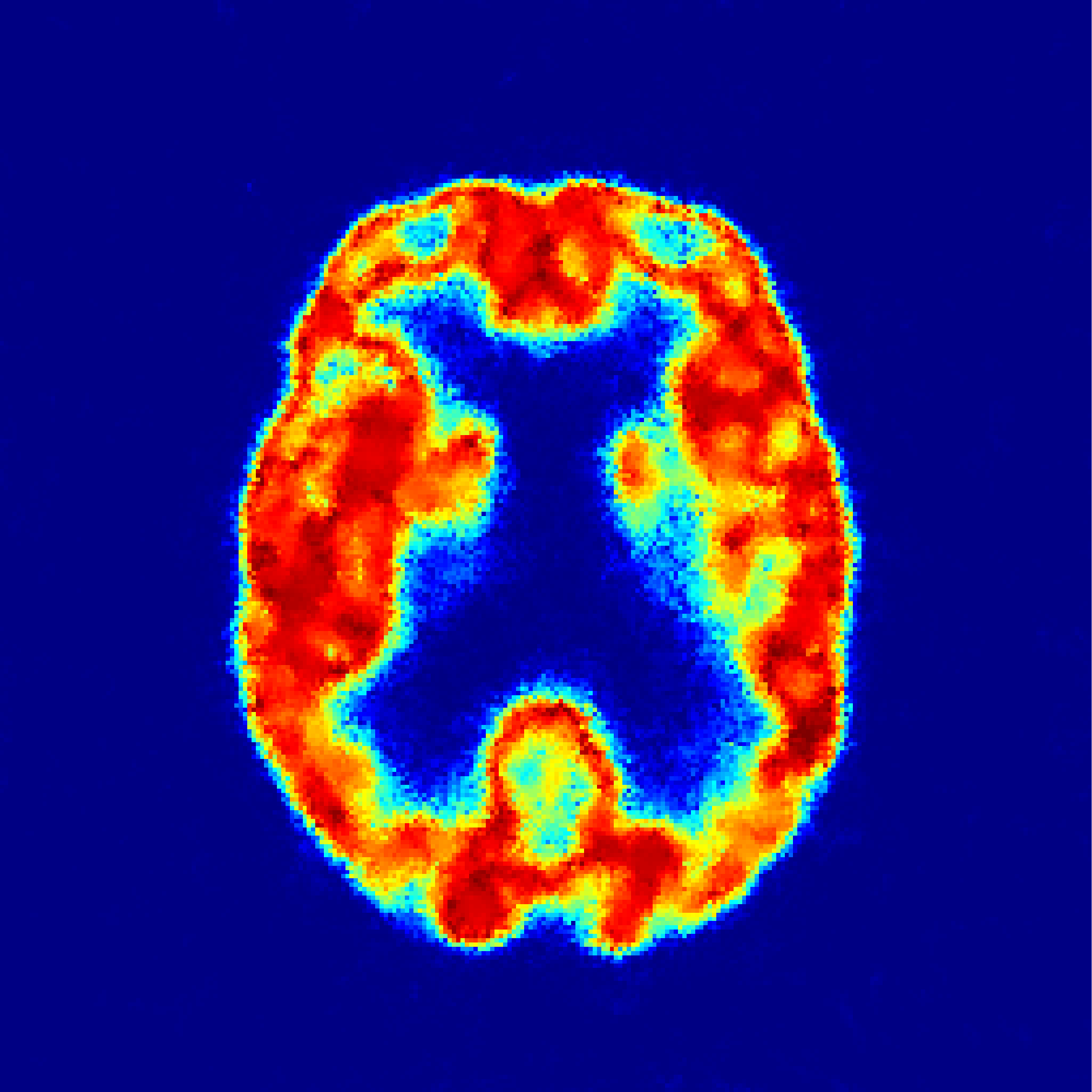}
		\end{minipage}&\hspace{-0.45cm}
		\begin{minipage}{3cm}
			\includegraphics[width=3cm]{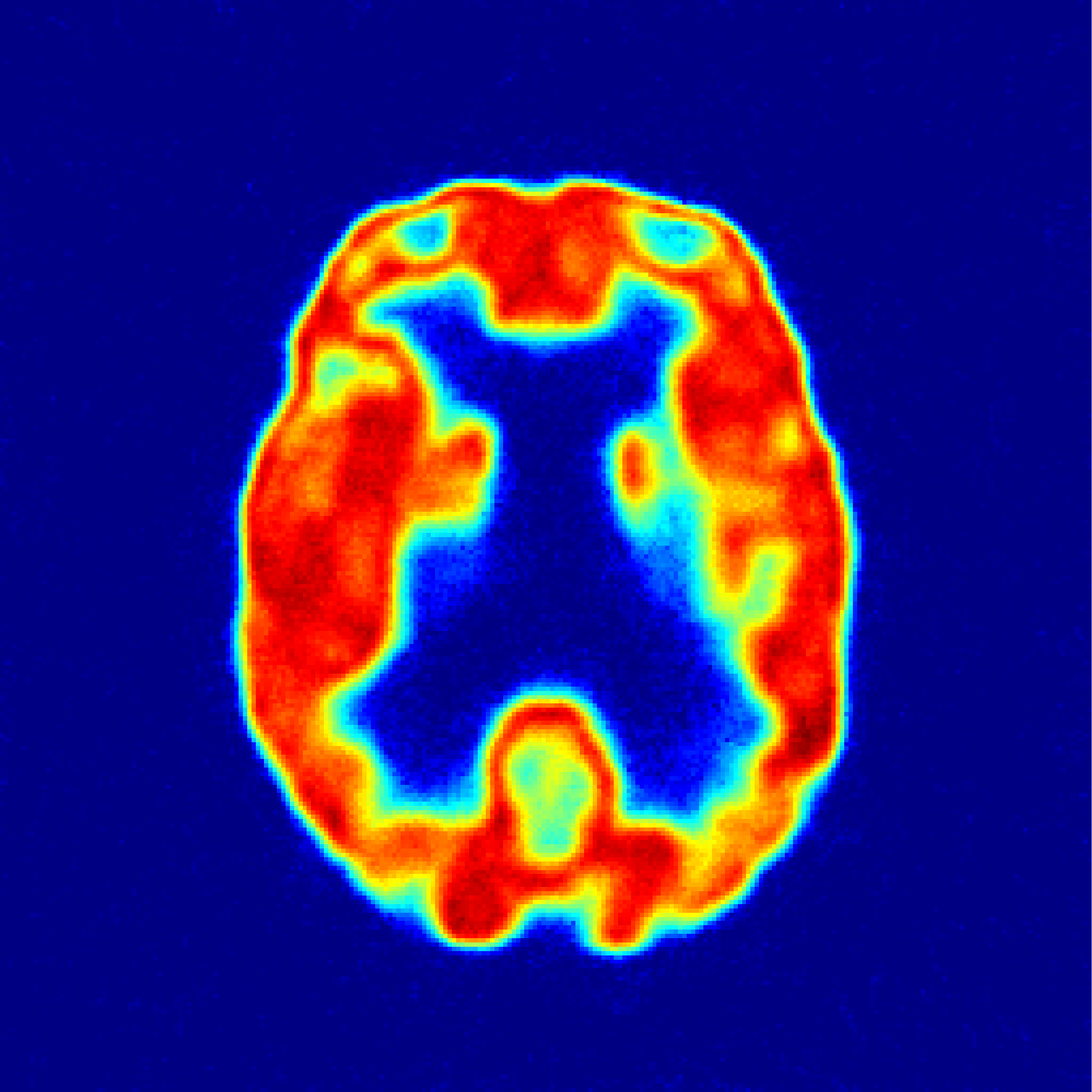}
		\end{minipage}\\
		{\small{Original}}&\hspace{-0.45cm}
{\small{Initial}}&\hspace{-0.45cm}
		{\small{Analysis \eqref{AnaPET}}}&\hspace{-0.45cm}
		{\small{DDTF \eqref{DDTFPET}}}\\
		\begin{minipage}{3cm}
			\includegraphics[width=3cm]{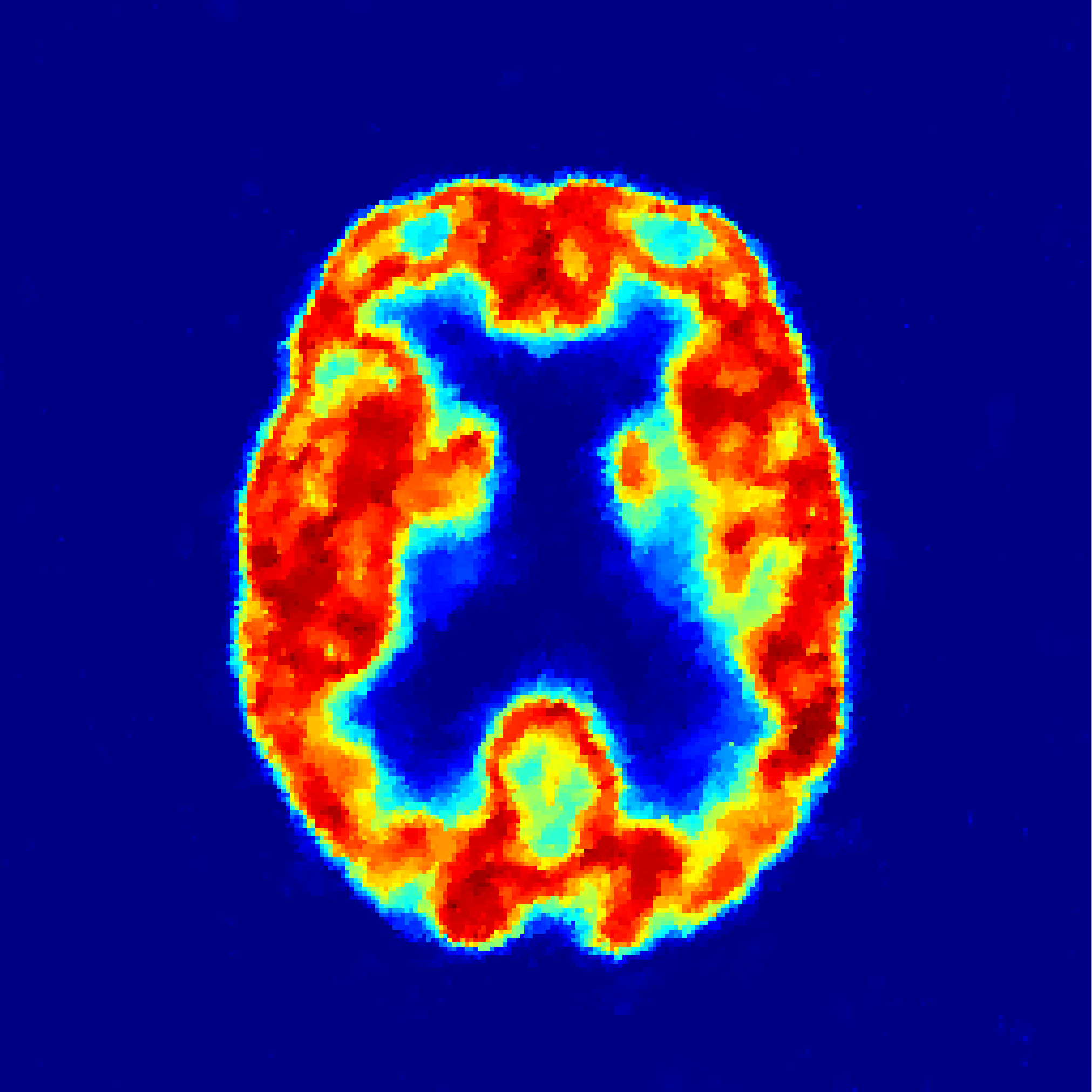}
		\end{minipage}&\hspace{-0.45cm}
		\begin{minipage}{3cm}
			\includegraphics[width=3cm]{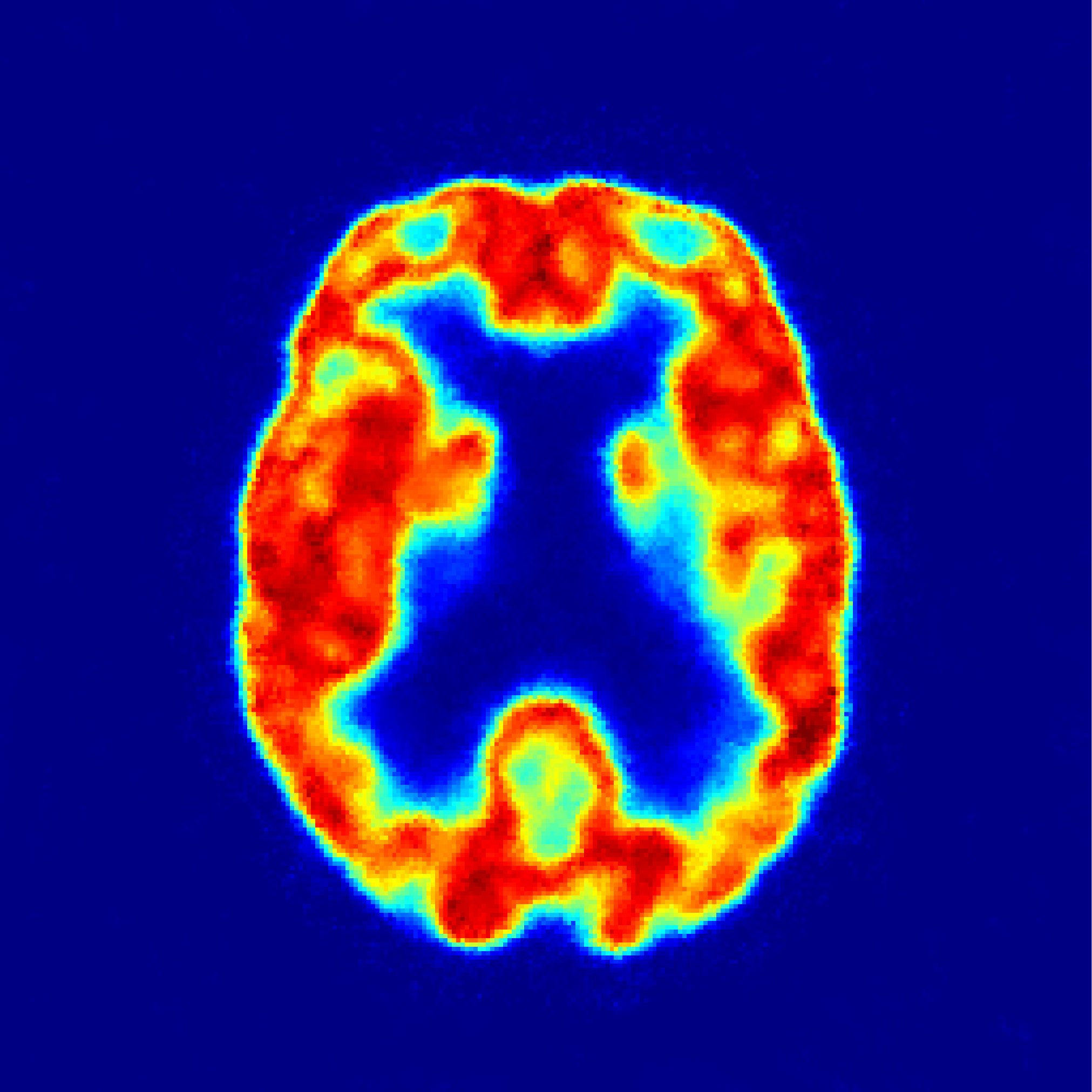}
		\end{minipage}&\hspace{-0.45cm}
		\begin{minipage}{3cm}
			\includegraphics[width=3cm]{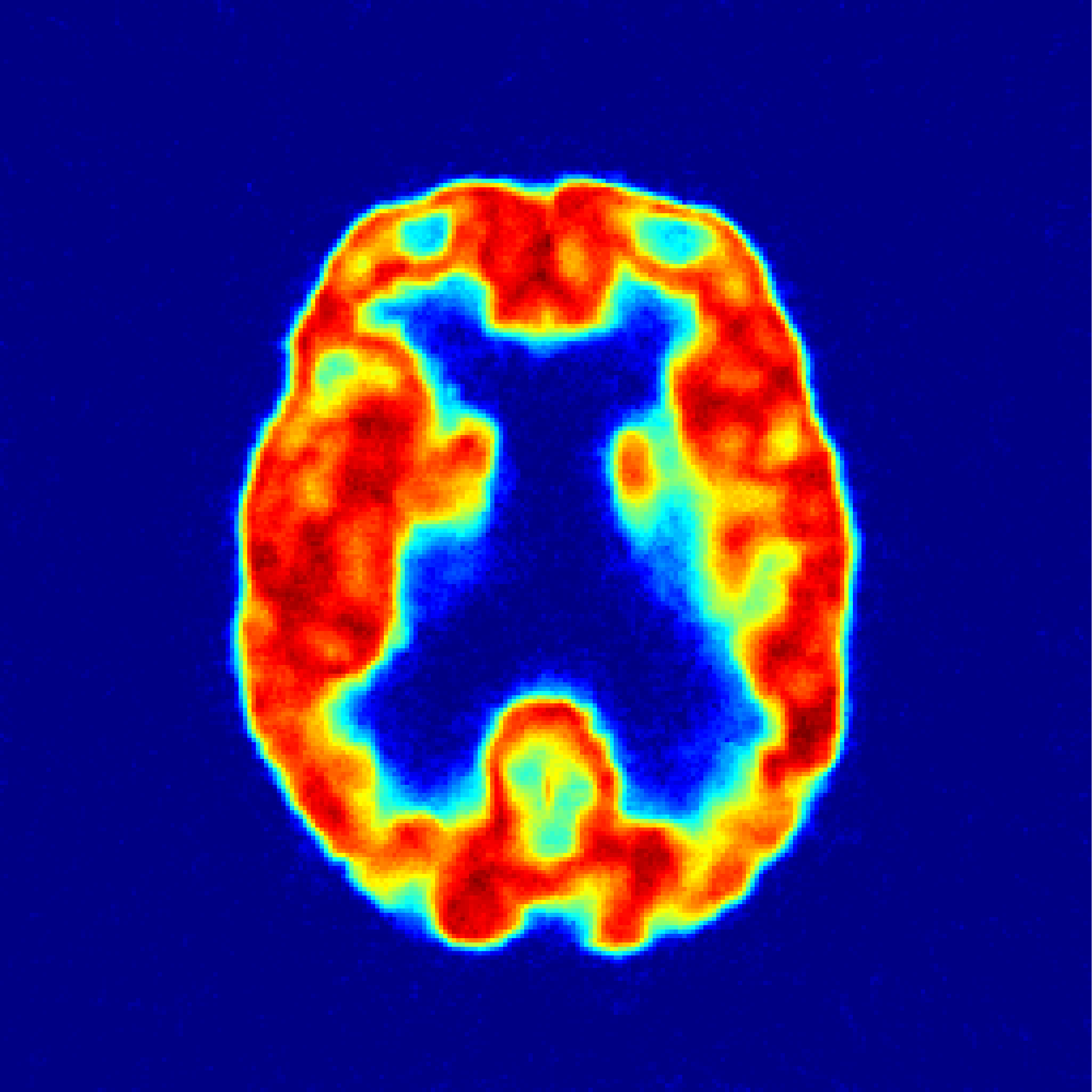}
		\end{minipage}&\hspace{-0.45cm}
		\begin{minipage}{3cm}
			\includegraphics[width=3cm]{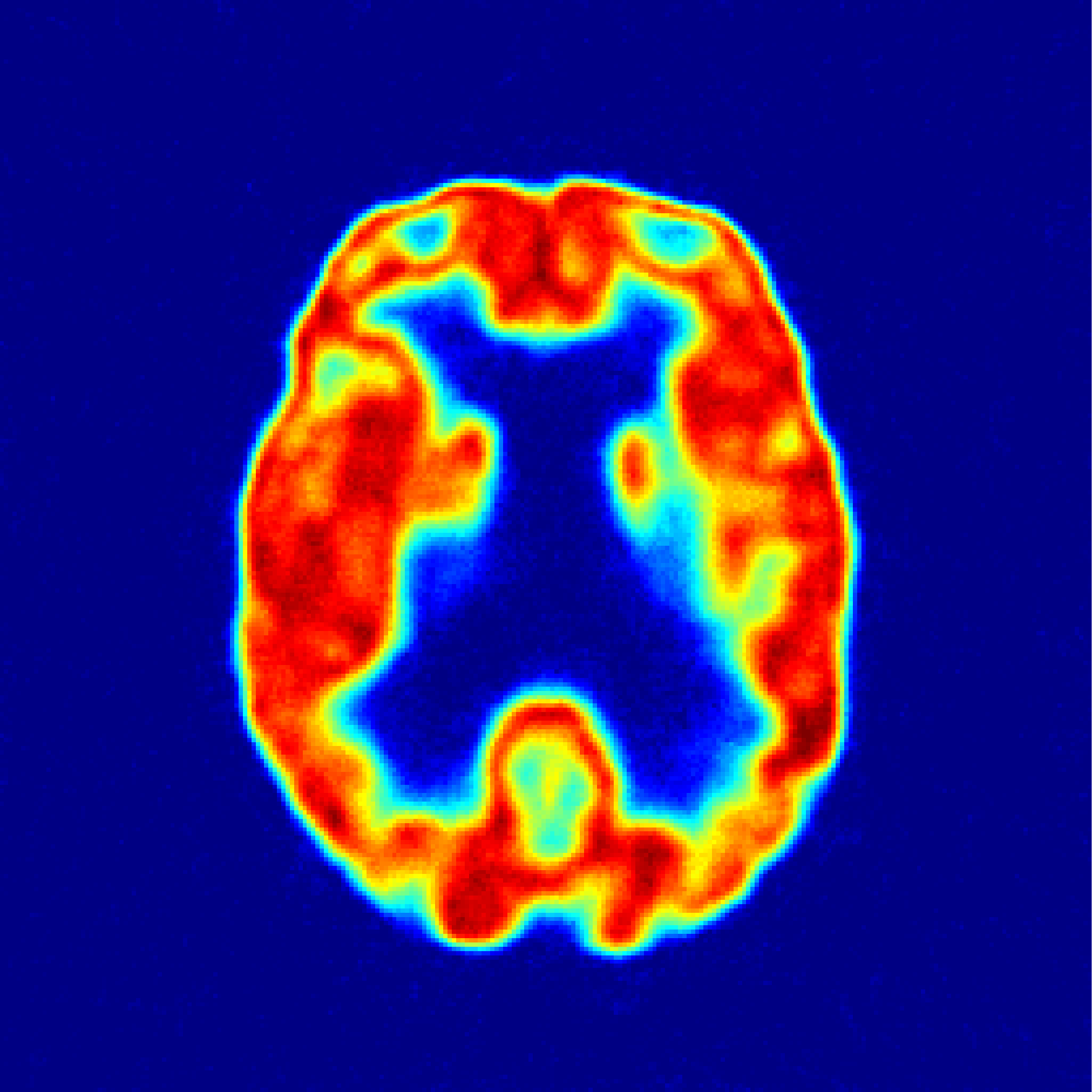}
		\end{minipage}\\
{\small{QPLS \cite{M.J.Ehrhardt2015}}}&\hspace{-0.45cm}
		{\small{JAnal \eqref{JAnalPETMRI}}}&\hspace{-0.45cm}
		{\small{JSTF \eqref{Proposed}}}&\hspace{-0.45cm}
		{\small{JSDDTF \eqref{DDTFPETMRI}}}\\
		\begin{minipage}{3cm}
			\includegraphics[width=3cm]{MRPD15Original.pdf}
		\end{minipage}&\hspace{-0.45cm}
		\begin{minipage}{3cm}
			\includegraphics[width=3cm]{MRPD15Radial30ZeroFill.pdf}
		\end{minipage}&\hspace{-0.45cm}
		\begin{minipage}{3cm}
			\includegraphics[width=3cm]{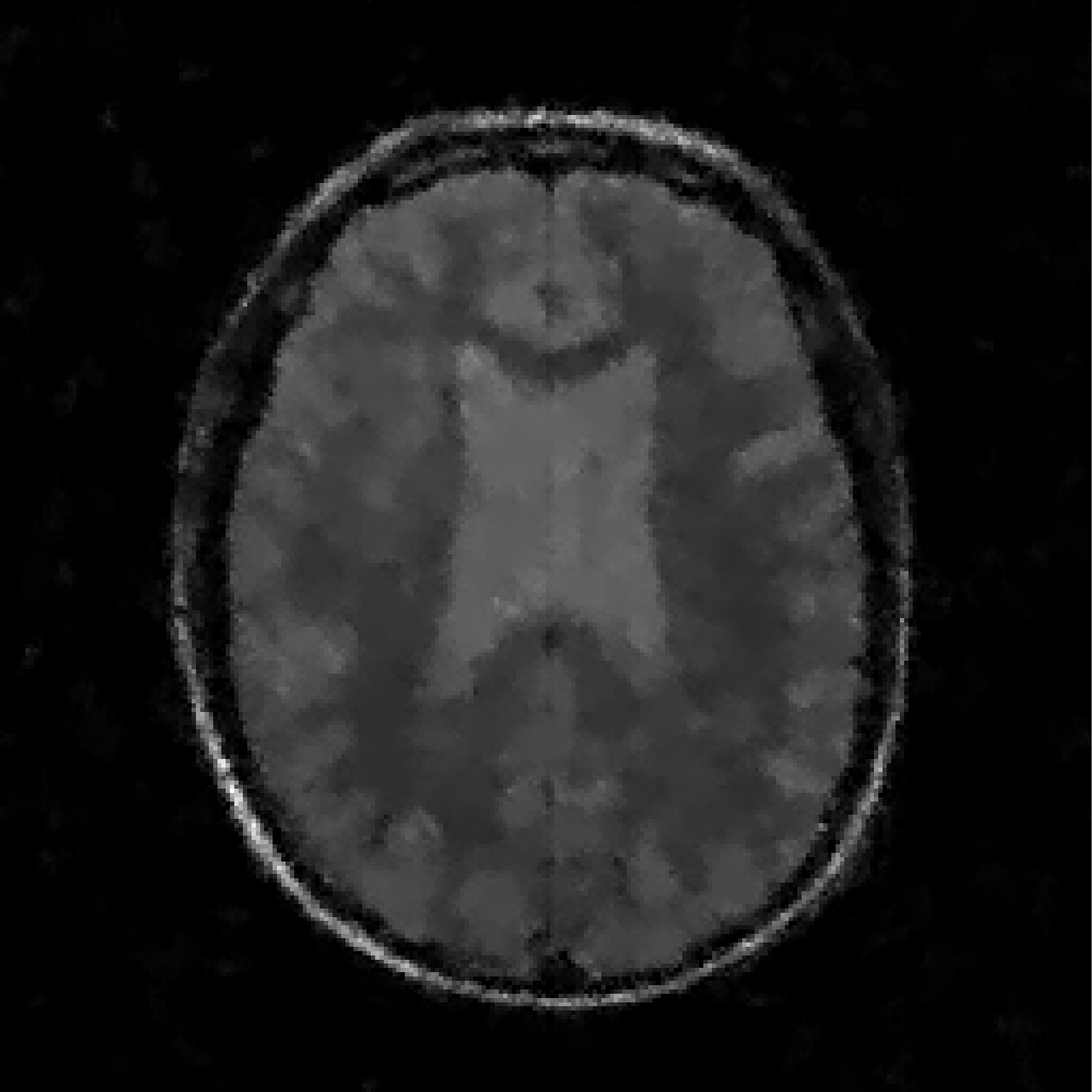}
		\end{minipage}&\hspace{-0.45cm}
\begin{minipage}{3cm}
\includegraphics[width=3cm]{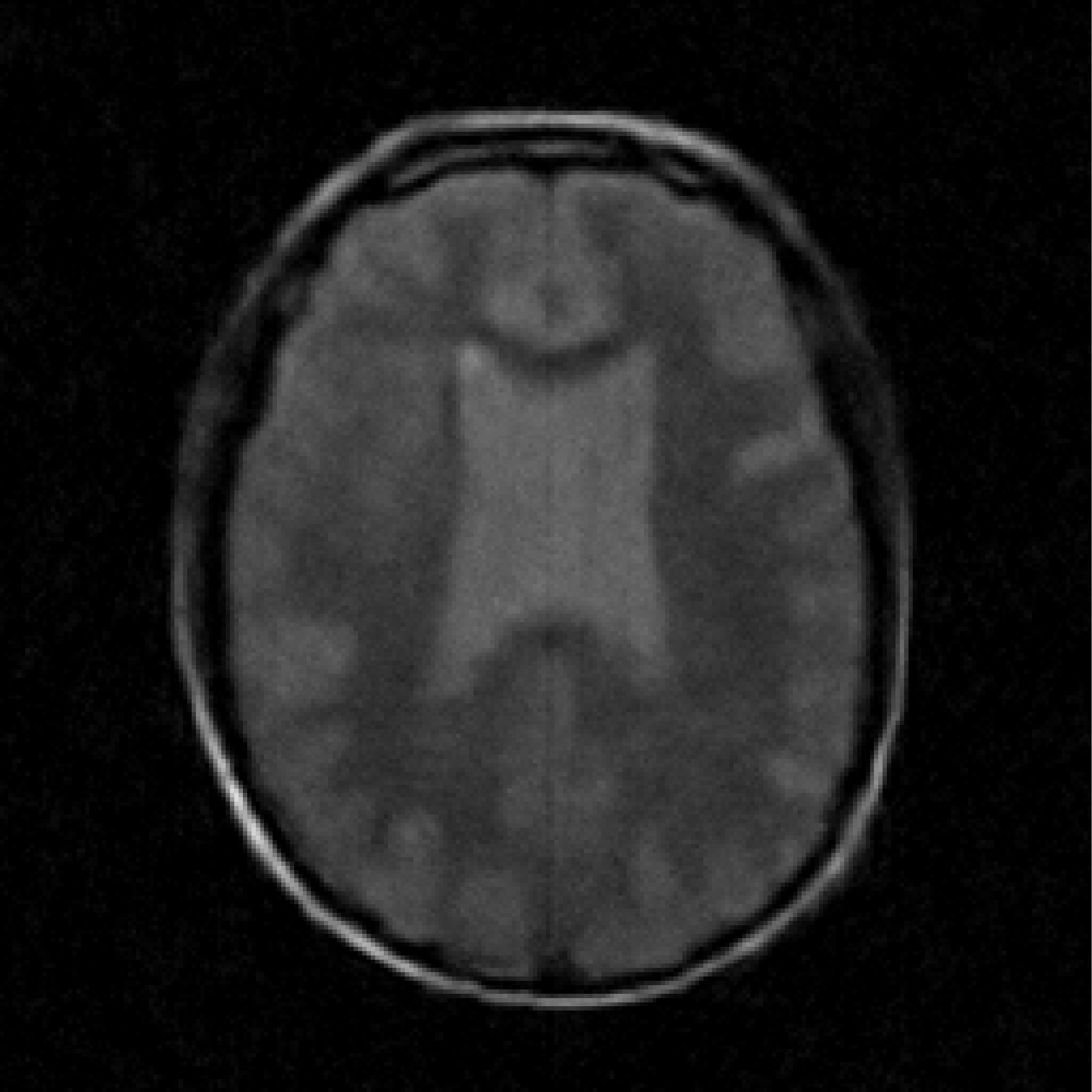}
\end{minipage}\\
		{\small{Original}}&\hspace{-0.45cm}
{\small{Initial}}&\hspace{-0.45cm}
{\small{Analysis \eqref{AnaMRI}}}&\hspace{-0.45cm}
		{\small{DDTF \eqref{DDTFMRI}}}\\
\begin{minipage}{3cm}
			\includegraphics[width=3cm]{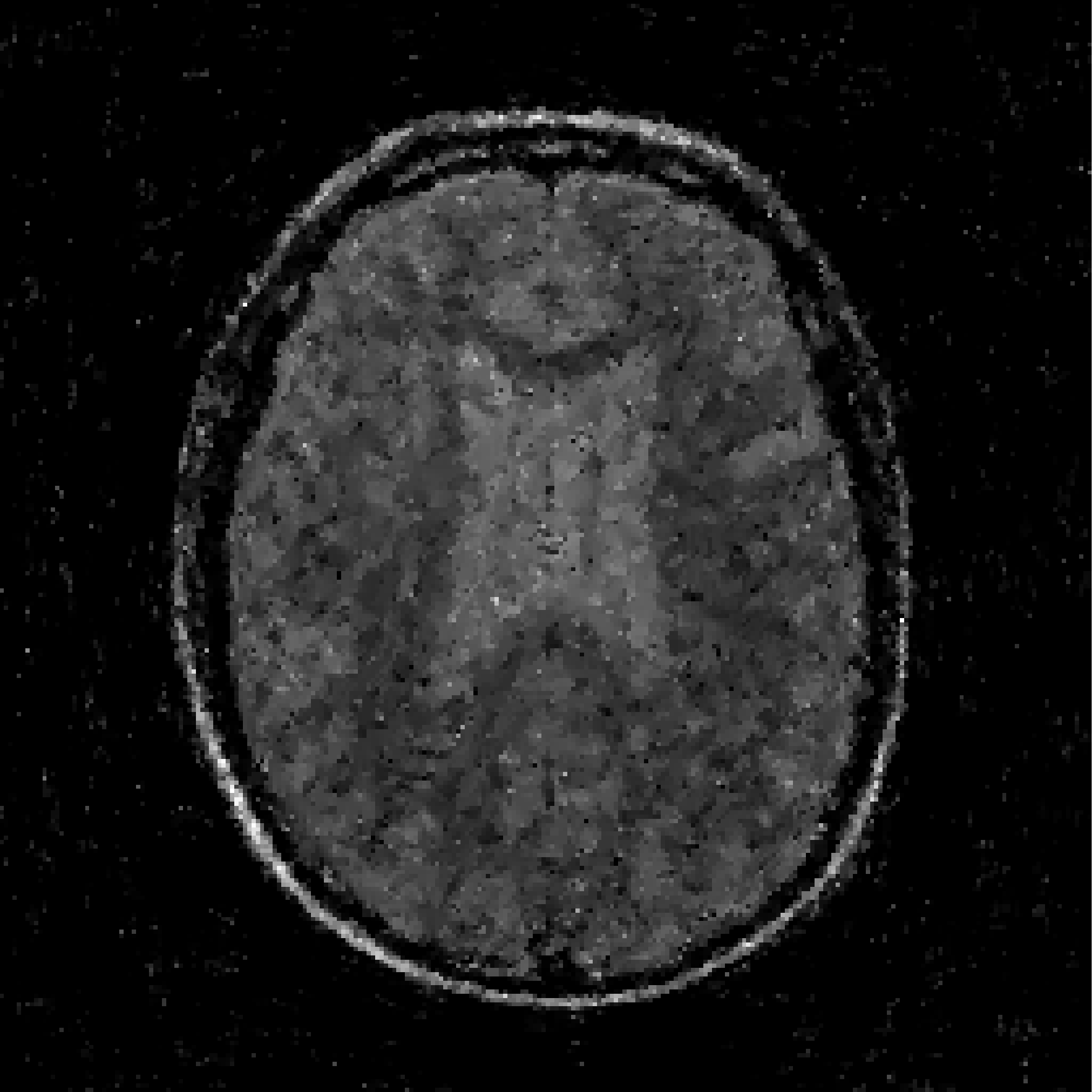}
		\end{minipage}&\hspace{-0.45cm}
		\begin{minipage}{3cm}
			\includegraphics[width=3cm]{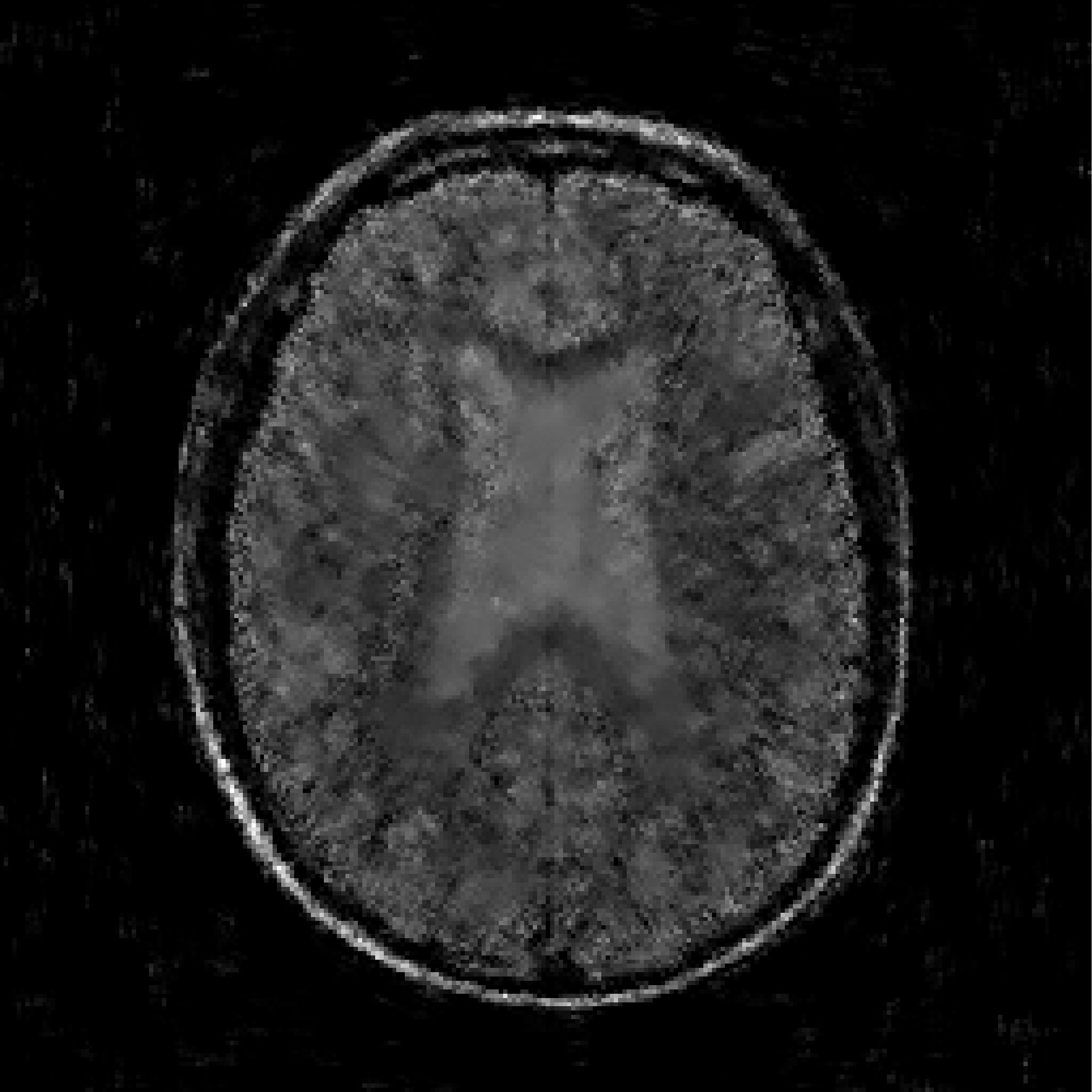}
		\end{minipage}&\hspace{-0.45cm}
		\begin{minipage}{3cm}
			\includegraphics[width=3cm]{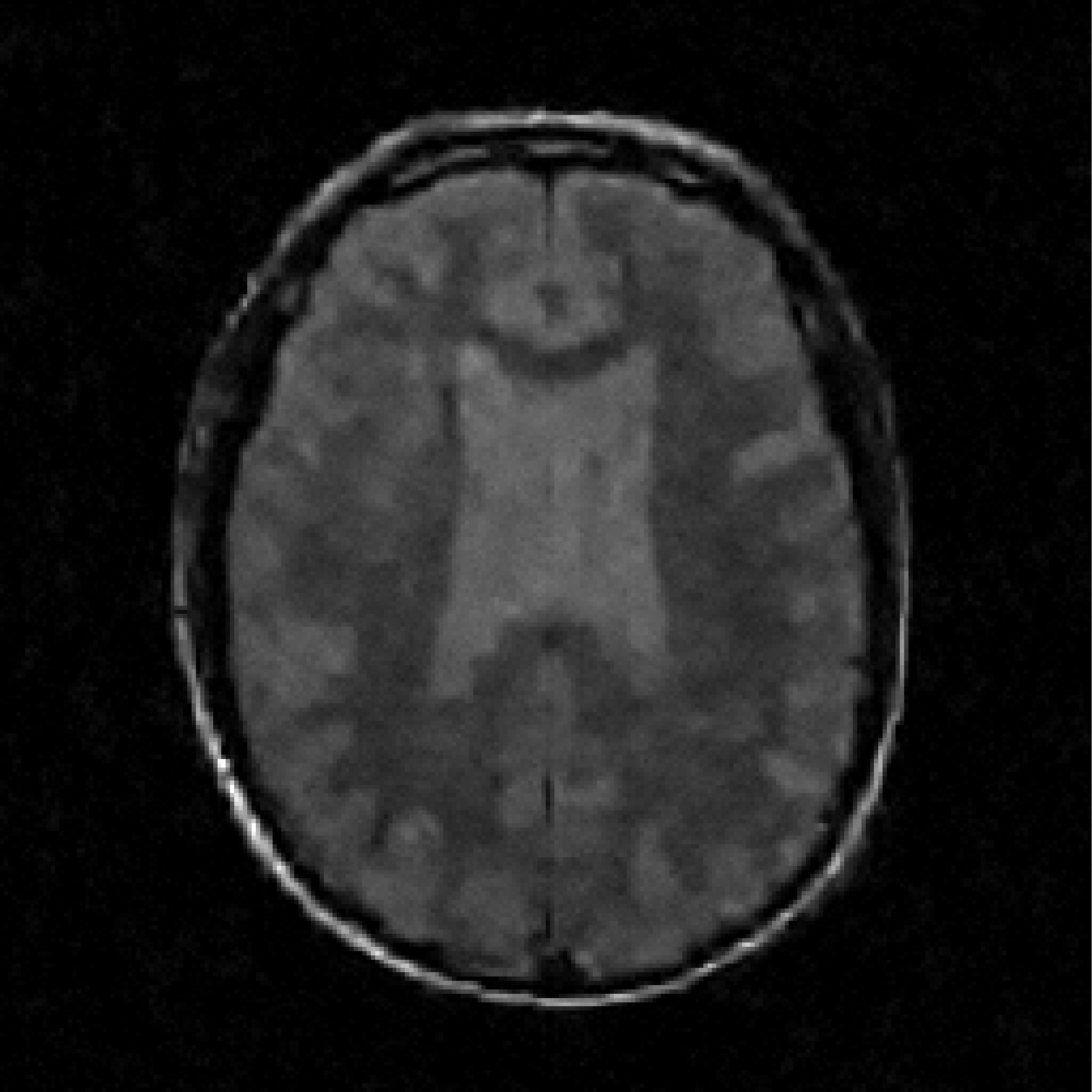}
		\end{minipage}&\hspace{-0.45cm}
		\begin{minipage}{3cm}
			\includegraphics[width=3cm]{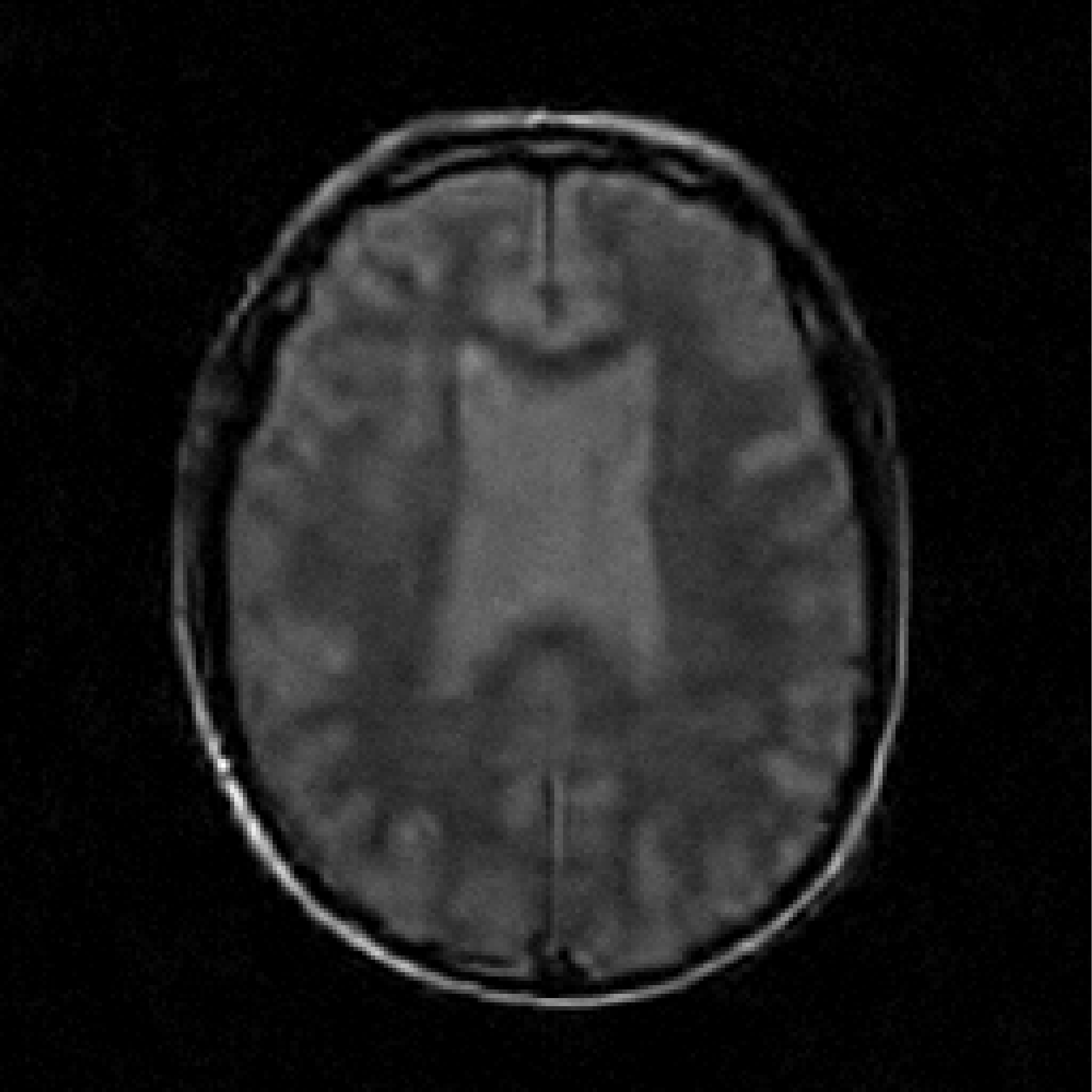}
		\end{minipage}\\
{\small{QPLS \cite{M.J.Ehrhardt2015}}}&\hspace{-0.45cm}
		{\small{JAnal \eqref{JAnalPETMRI}}}&\hspace{-0.45cm}
		{\small{JSTF \eqref{Proposed}}}&\hspace{-0.45cm}
		{\small{JSDDTF \eqref{DDTFPETMRI}}}
	\end{tabular}
	\caption{Visual comparison of PET-PD Radial joint reconstruction results. The first and second rows describe the PET images, and the third and fourth rows depict the MRI images.}\label{PETMRPD15RadialResults}
\end{figure}

\begin{figure}[htp!]
	\centering
		\begin{tabular}{cccc}
		\begin{minipage}{3cm}
			\includegraphics[width=3cm]{PET15Original.pdf}
		\end{minipage}&\hspace{-0.45cm}
\begin{minipage}{3cm}
			\includegraphics[width=3cm]{PET15EM.pdf}
		\end{minipage}&\hspace{-0.45cm}
		\begin{minipage}{3cm}
			\includegraphics[width=3cm]{PET15Anal.pdf}
		\end{minipage}&\hspace{-0.45cm}
		\begin{minipage}{3cm}
			\includegraphics[width=3cm]{PET15DDTF.pdf}
		\end{minipage}\\
		{\small{Original}}&\hspace{-0.45cm}
{\small{Initial}}&\hspace{-0.45cm}
		{\small{Analysis \eqref{AnaPET}}}&\hspace{-0.45cm}
		{\small{DDTF \eqref{DDTFPET}}}\\
		\begin{minipage}{3cm}
			\includegraphics[width=3cm]{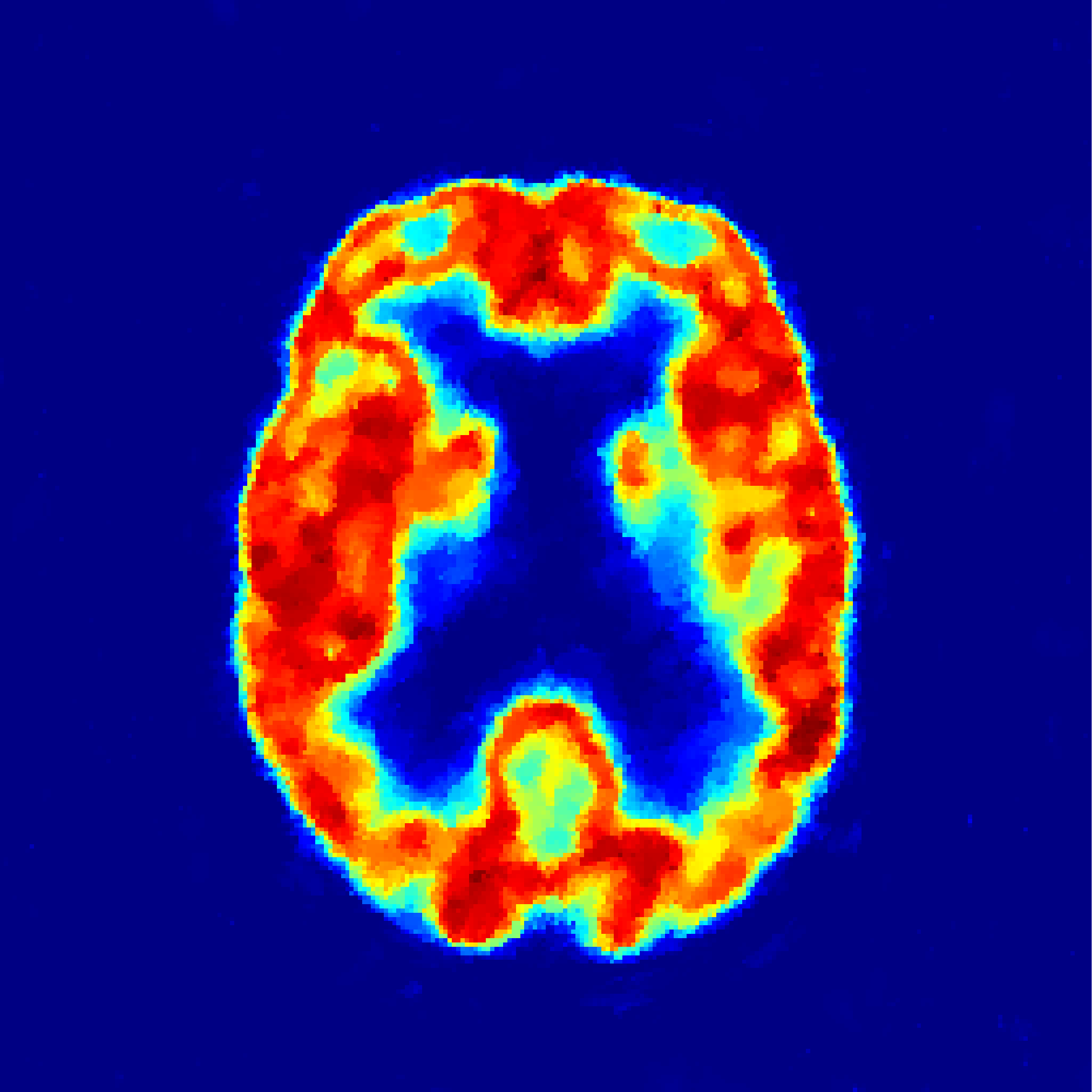}
		\end{minipage}&\hspace{-0.45cm}
		\begin{minipage}{3cm}
			\includegraphics[width=3cm]{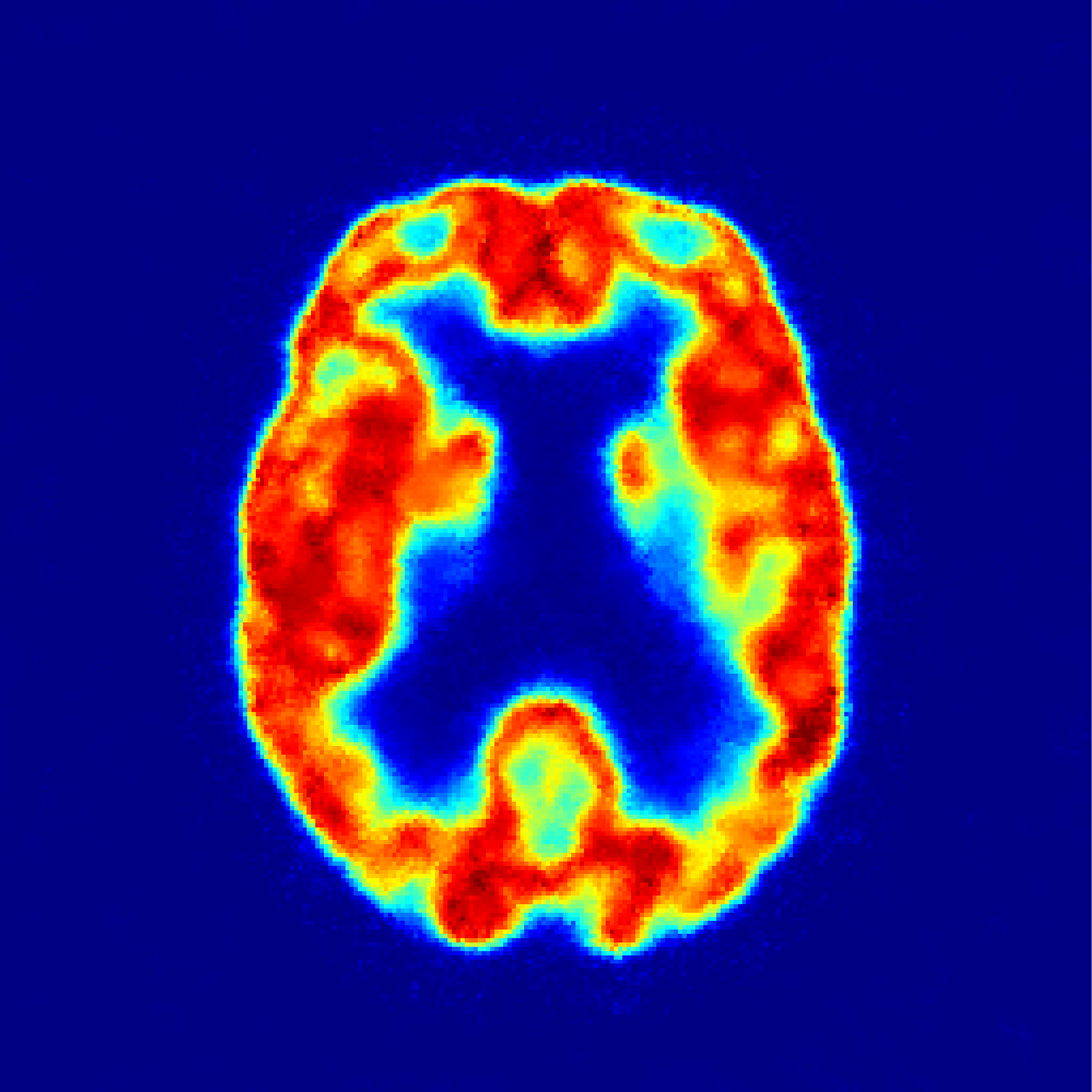}
		\end{minipage}&\hspace{-0.45cm}
		\begin{minipage}{3cm}
			\includegraphics[width=3cm]{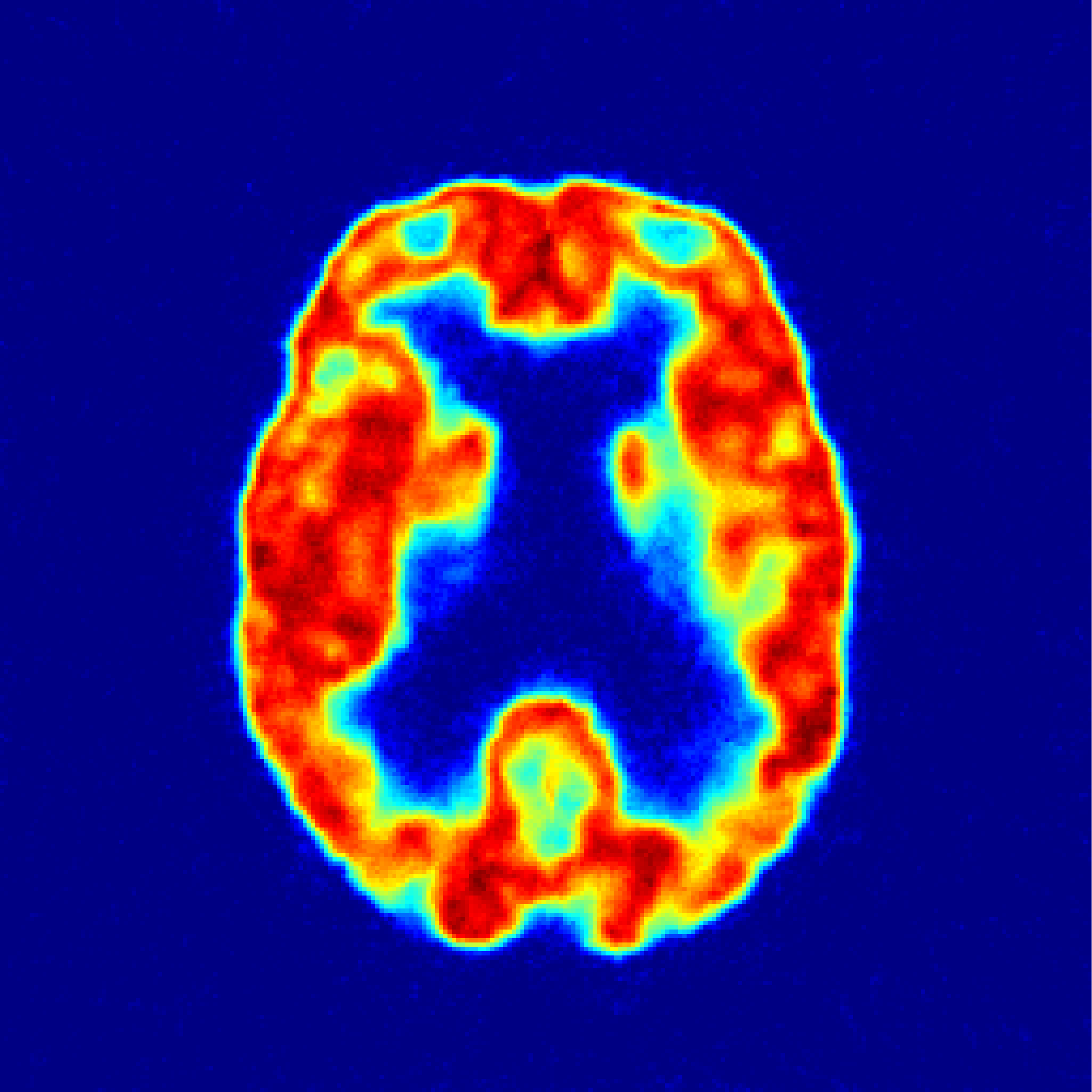}
		\end{minipage}&\hspace{-0.45cm}
		\begin{minipage}{3cm}
			\includegraphics[width=3cm]{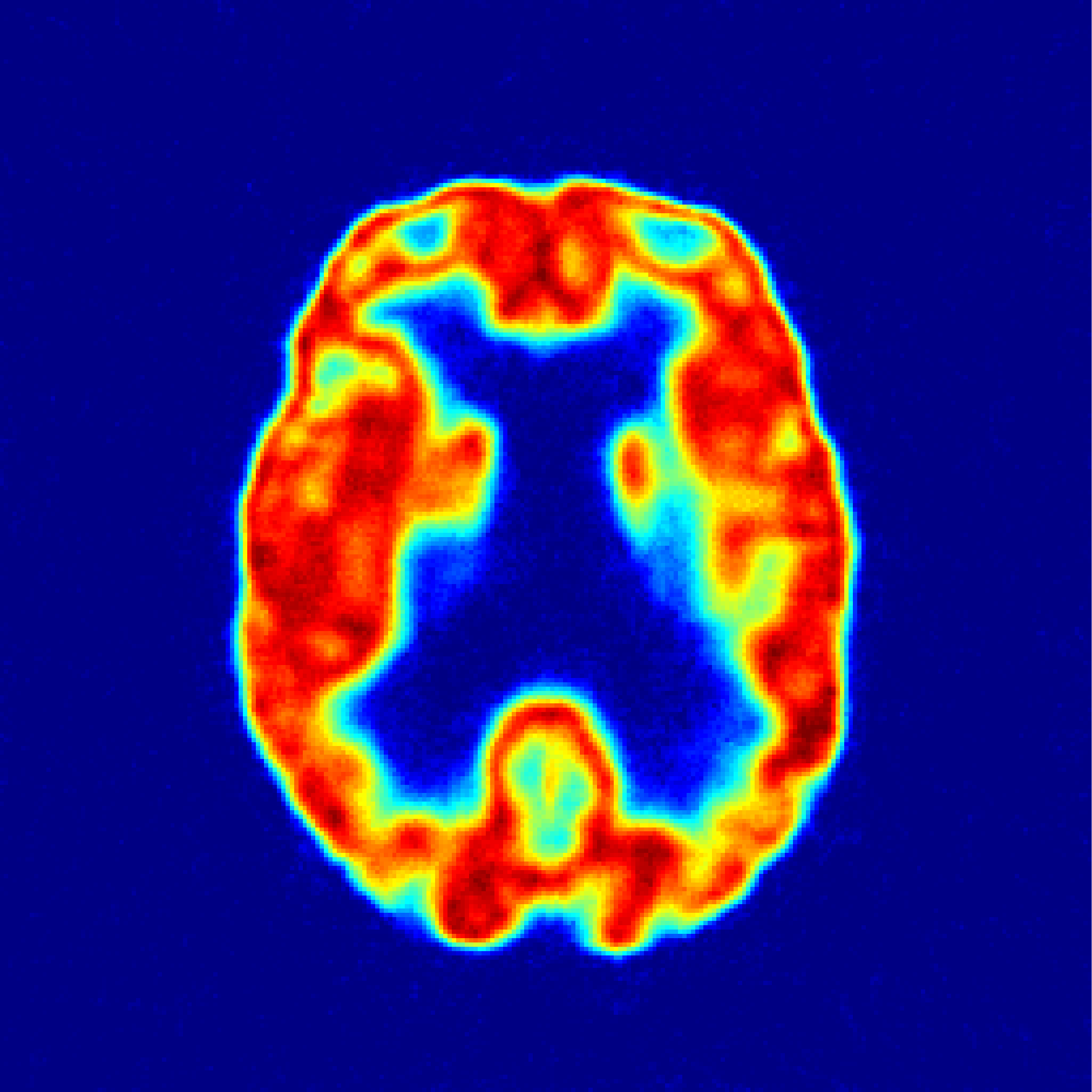}
		\end{minipage}\\
{\small{QPLS \cite{M.J.Ehrhardt2015}}}&\hspace{-0.45cm}
		{\small{JAnal \eqref{JAnalPETMRI}}}&\hspace{-0.45cm}
		{\small{JSTF \eqref{Proposed}}}&\hspace{-0.45cm}
		{\small{JSDDTF \eqref{DDTFPETMRI}}}\\
		\begin{minipage}{3cm}
			\includegraphics[width=3cm]{MRT115Original.pdf}
		\end{minipage}&\hspace{-0.45cm}
		\begin{minipage}{3cm}
			\includegraphics[width=3cm]{MRT115Radial30ZeroFill.pdf}
		\end{minipage}&\hspace{-0.45cm}
		\begin{minipage}{3cm}
			\includegraphics[width=3cm]{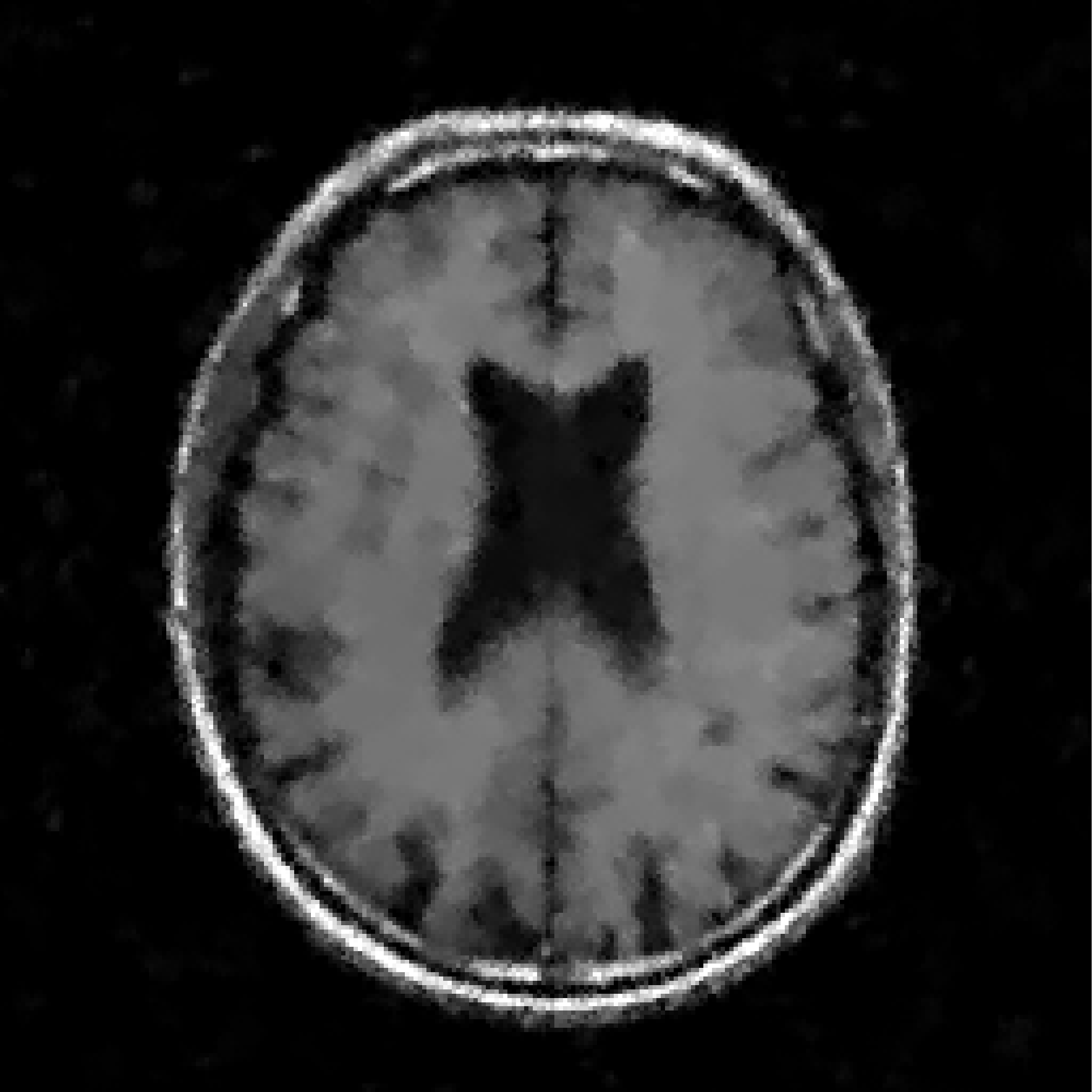}
		\end{minipage}&\hspace{-0.45cm}
\begin{minipage}{3cm}
\includegraphics[width=3cm]{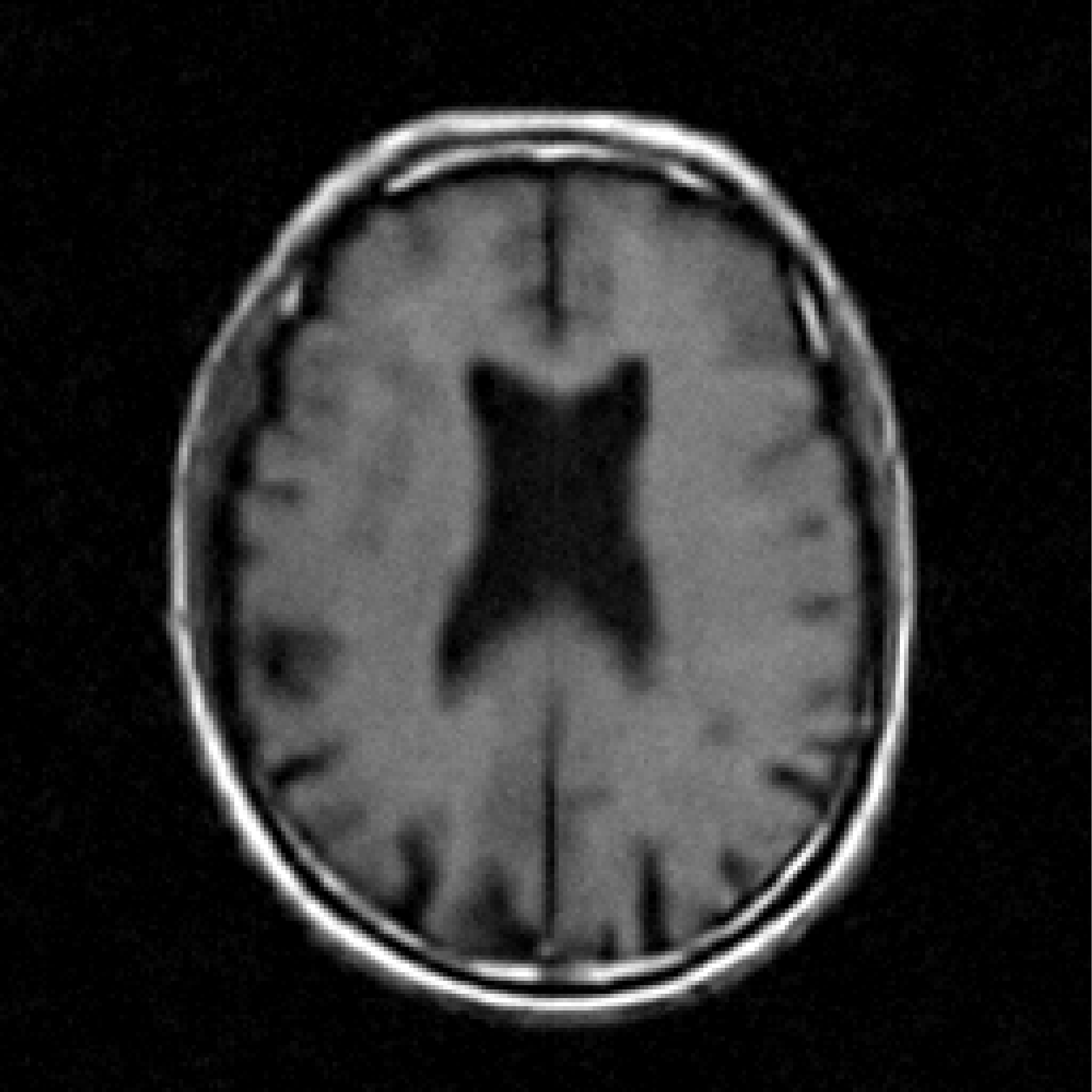}
\end{minipage}\\
		{\small{Original}}&\hspace{-0.45cm}
{\small{Initial}}&\hspace{-0.45cm}
{\small{Analysis \eqref{AnaMRI}}}&\hspace{-0.45cm}
		{\small{DDTF \eqref{DDTFMRI}}}\\
\begin{minipage}{3cm}
			\includegraphics[width=3cm]{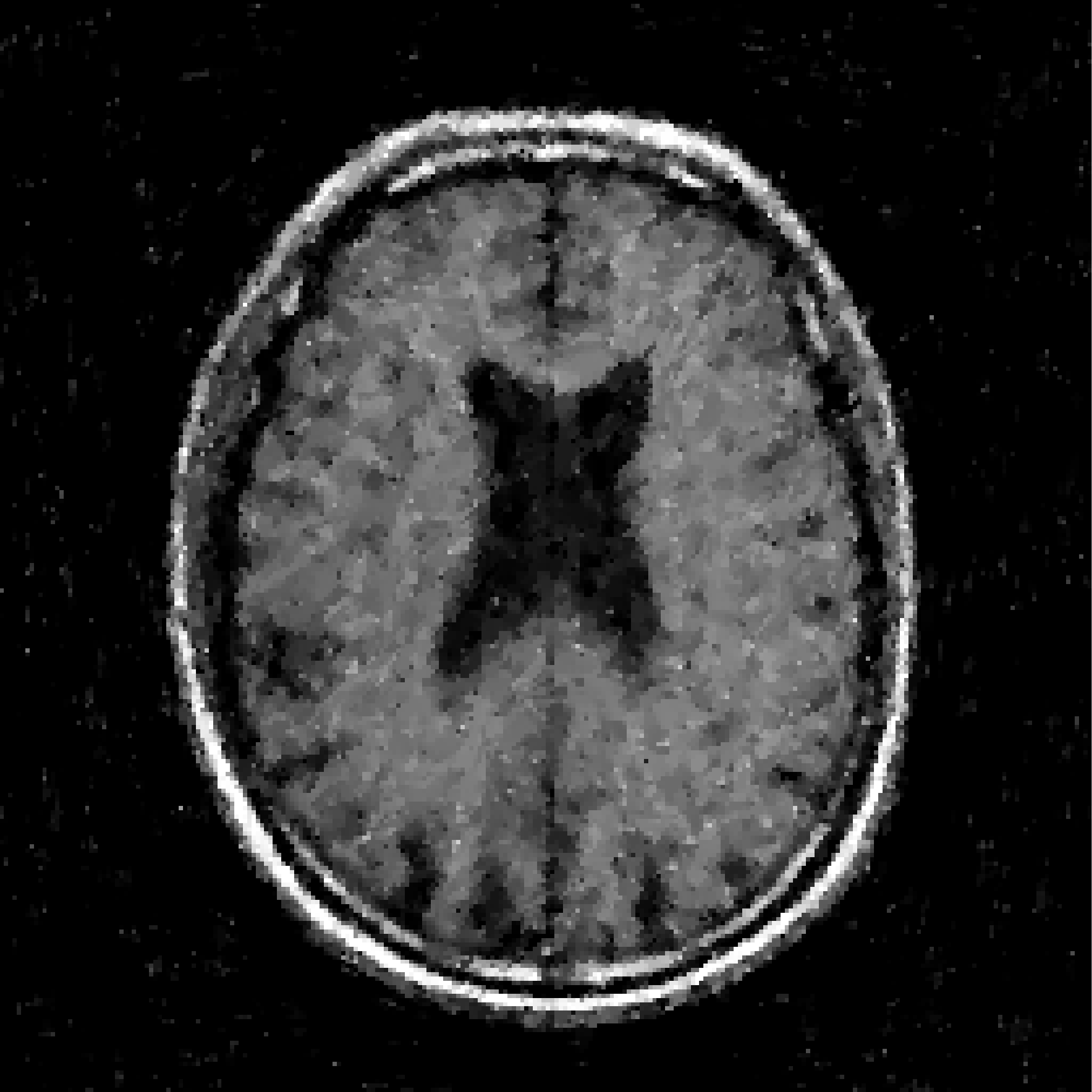}
		\end{minipage}&\hspace{-0.45cm}
		\begin{minipage}{3cm}
			\includegraphics[width=3cm]{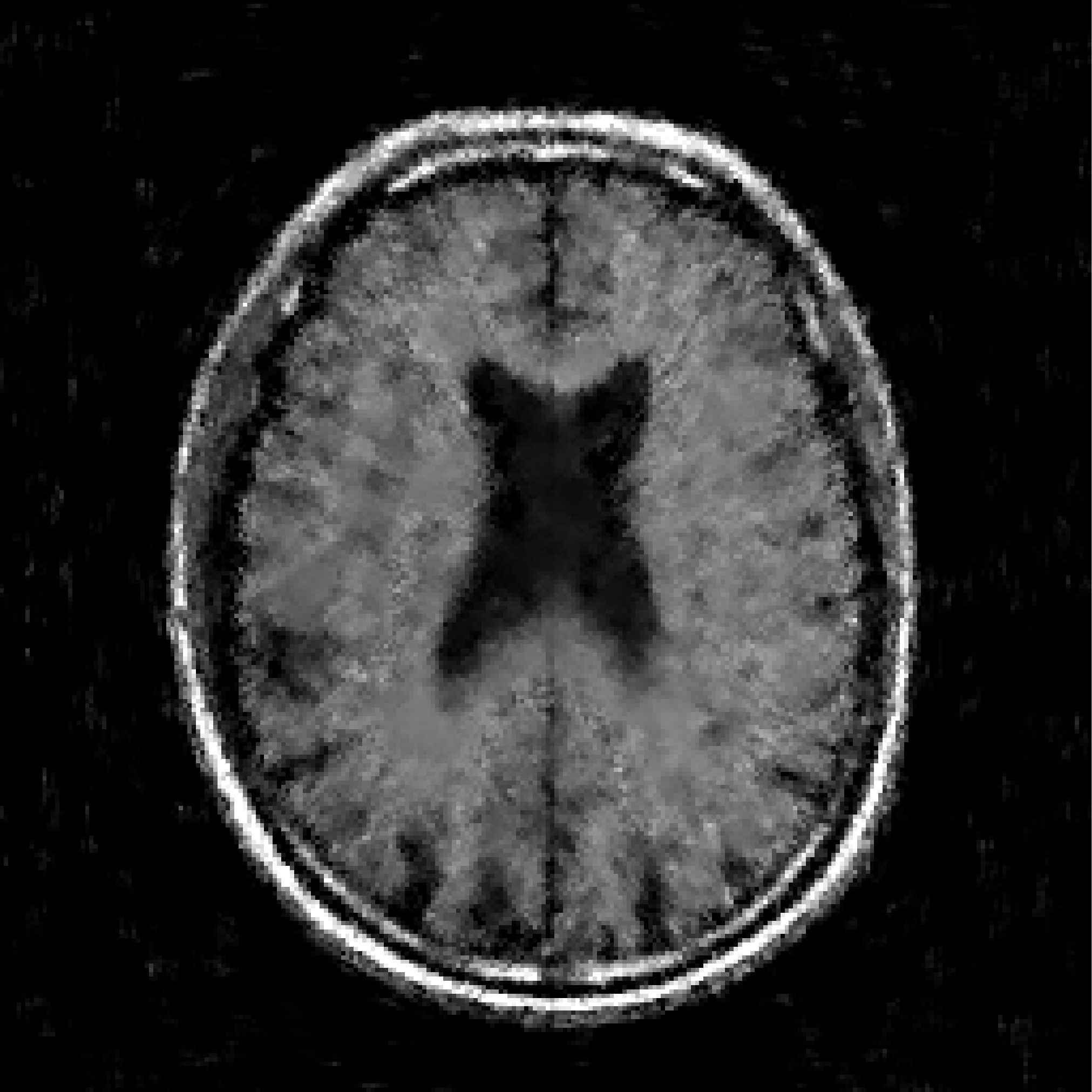}
		\end{minipage}&\hspace{-0.45cm}
		\begin{minipage}{3cm}
			\includegraphics[width=3cm]{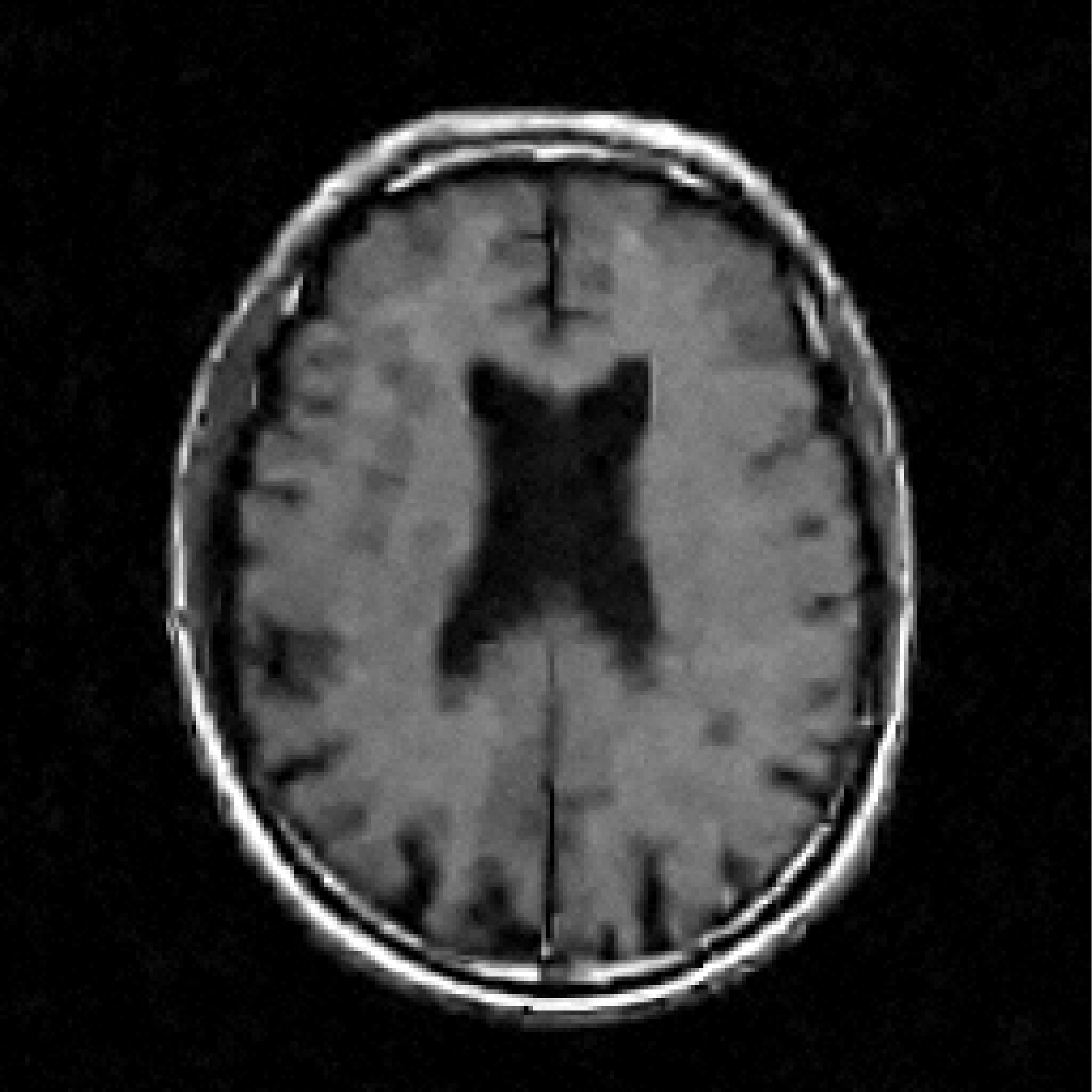}
		\end{minipage}&\hspace{-0.45cm}
		\begin{minipage}{3cm}
			\includegraphics[width=3cm]{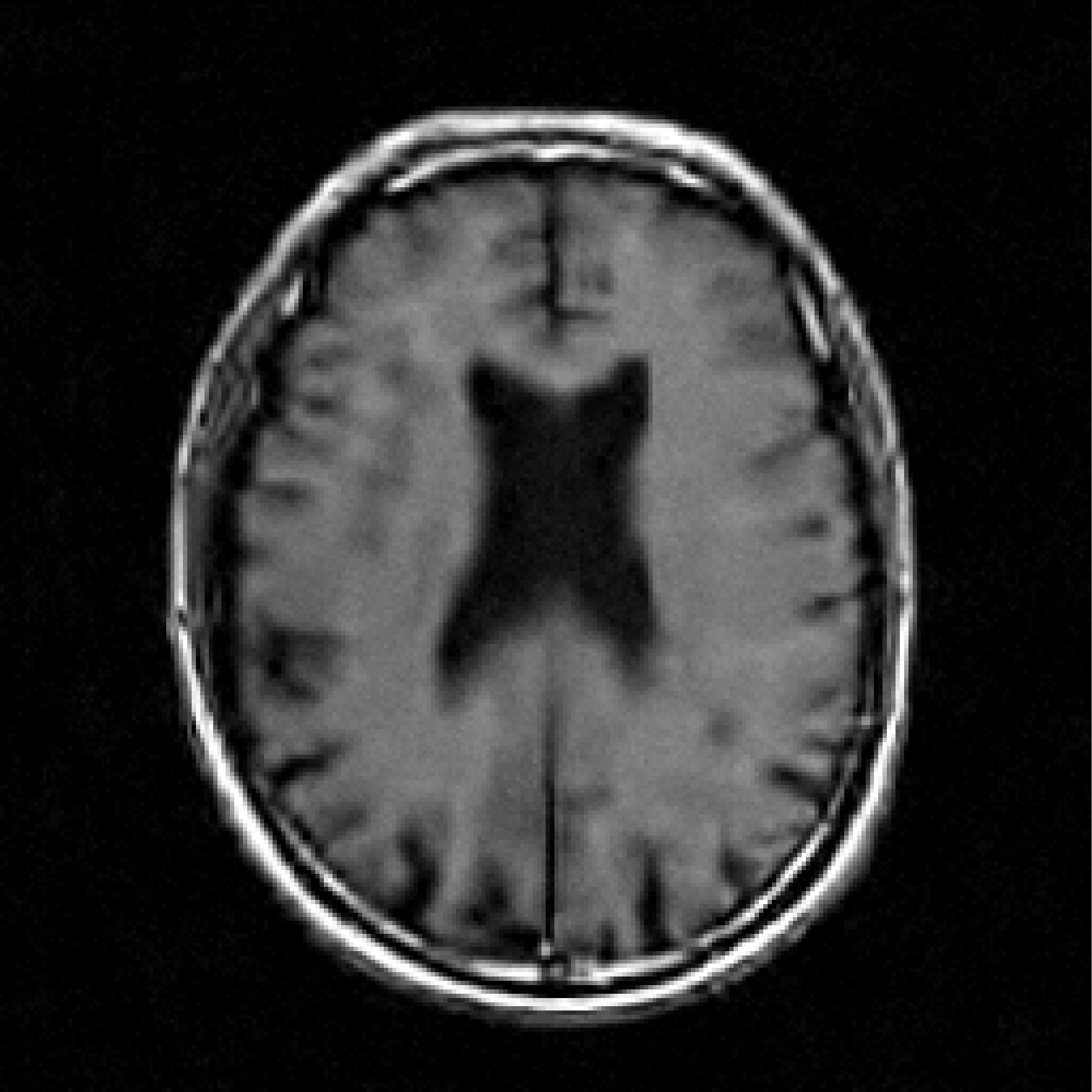}
		\end{minipage}\\
{\small{QPLS \cite{M.J.Ehrhardt2015}}}&\hspace{-0.45cm}
		{\small{JAnal \eqref{JAnalPETMRI}}}&\hspace{-0.45cm}
		{\small{JSTF \eqref{Proposed}}}&\hspace{-0.45cm}
		{\small{JSDDTF \eqref{DDTFPETMRI}}}
	\end{tabular}
	\caption{Visual comparison of PET-T1 Radial joint reconstruction results. The first and second rows describe the PET images, and the third and fourth rows depicts the MRI images.}\label{PETMRT115RadialResults}
\end{figure}

\begin{figure}[htp!]
	\centering
			\begin{tabular}{cccc}
		\begin{minipage}{3cm}
			\includegraphics[width=3cm]{PET15Original.pdf}
		\end{minipage}&\hspace{-0.45cm}
\begin{minipage}{3cm}
			\includegraphics[width=3cm]{PET15EM.pdf}
		\end{minipage}&\hspace{-0.45cm}
		\begin{minipage}{3cm}
			\includegraphics[width=3cm]{PET15Anal.pdf}
		\end{minipage}&\hspace{-0.45cm}
		\begin{minipage}{3cm}
			\includegraphics[width=3cm]{PET15DDTF.pdf}
		\end{minipage}\\
		{\small{Original}}&\hspace{-0.45cm}
{\small{Initial}}&\hspace{-0.45cm}
		{\small{Analysis \eqref{AnaPET}}}&\hspace{-0.45cm}
		{\small{DDTF \eqref{DDTFPET}}}\\
		\begin{minipage}{3cm}
			\includegraphics[width=3cm]{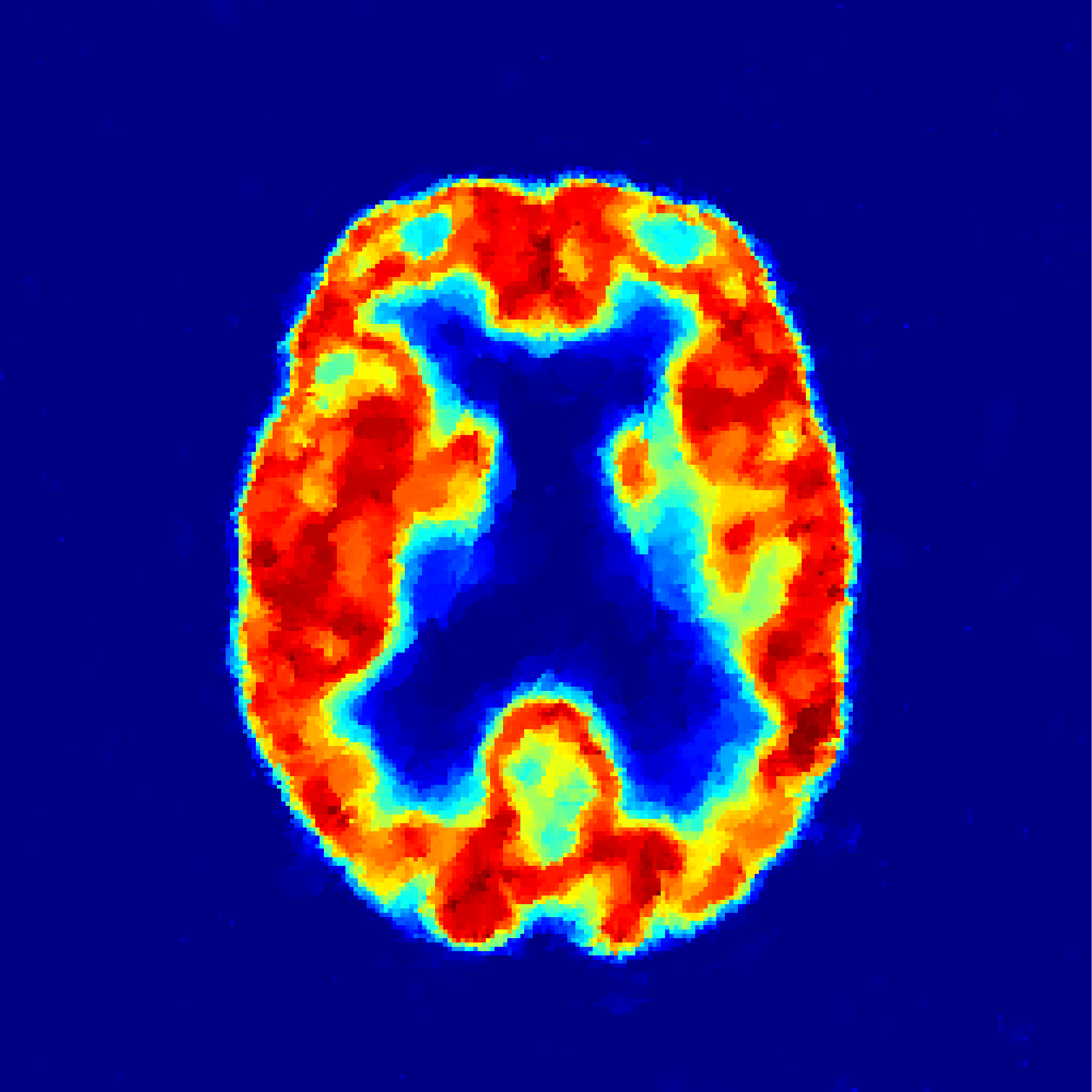}
		\end{minipage}&\hspace{-0.45cm}
		\begin{minipage}{3cm}
			\includegraphics[width=3cm]{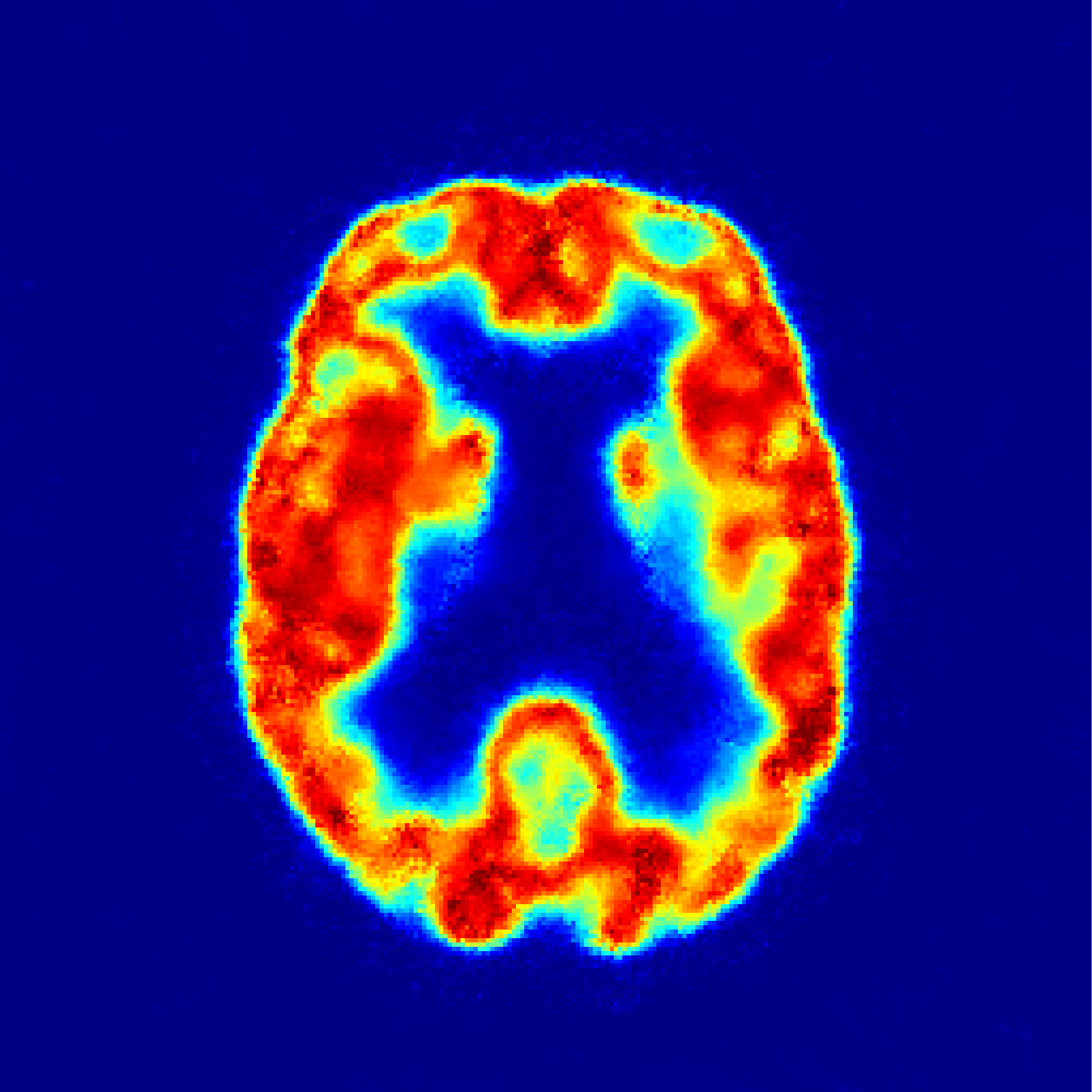}
		\end{minipage}&\hspace{-0.45cm}
		\begin{minipage}{3cm}
			\includegraphics[width=3cm]{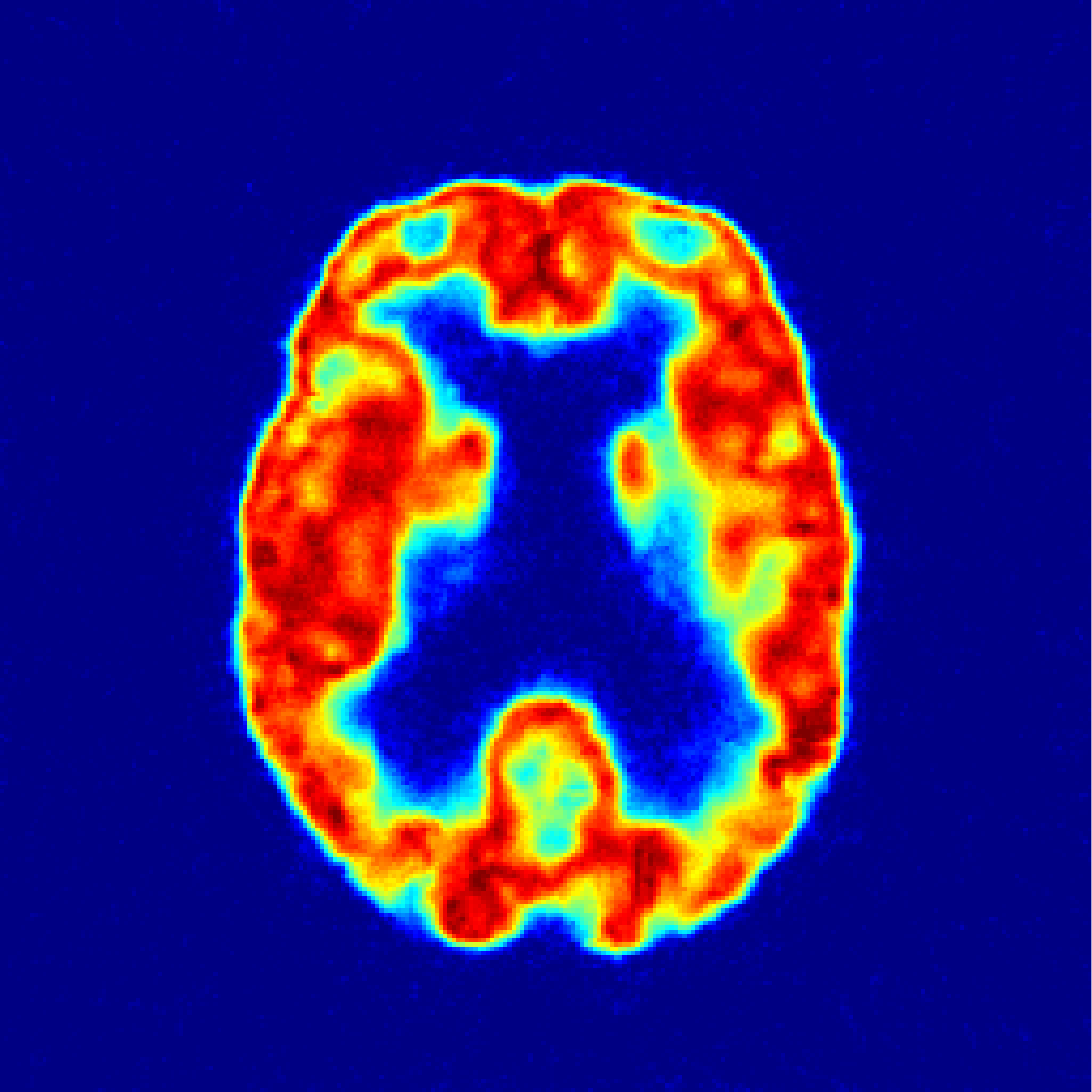}
		\end{minipage}&\hspace{-0.45cm}
		\begin{minipage}{3cm}
			\includegraphics[width=3cm]{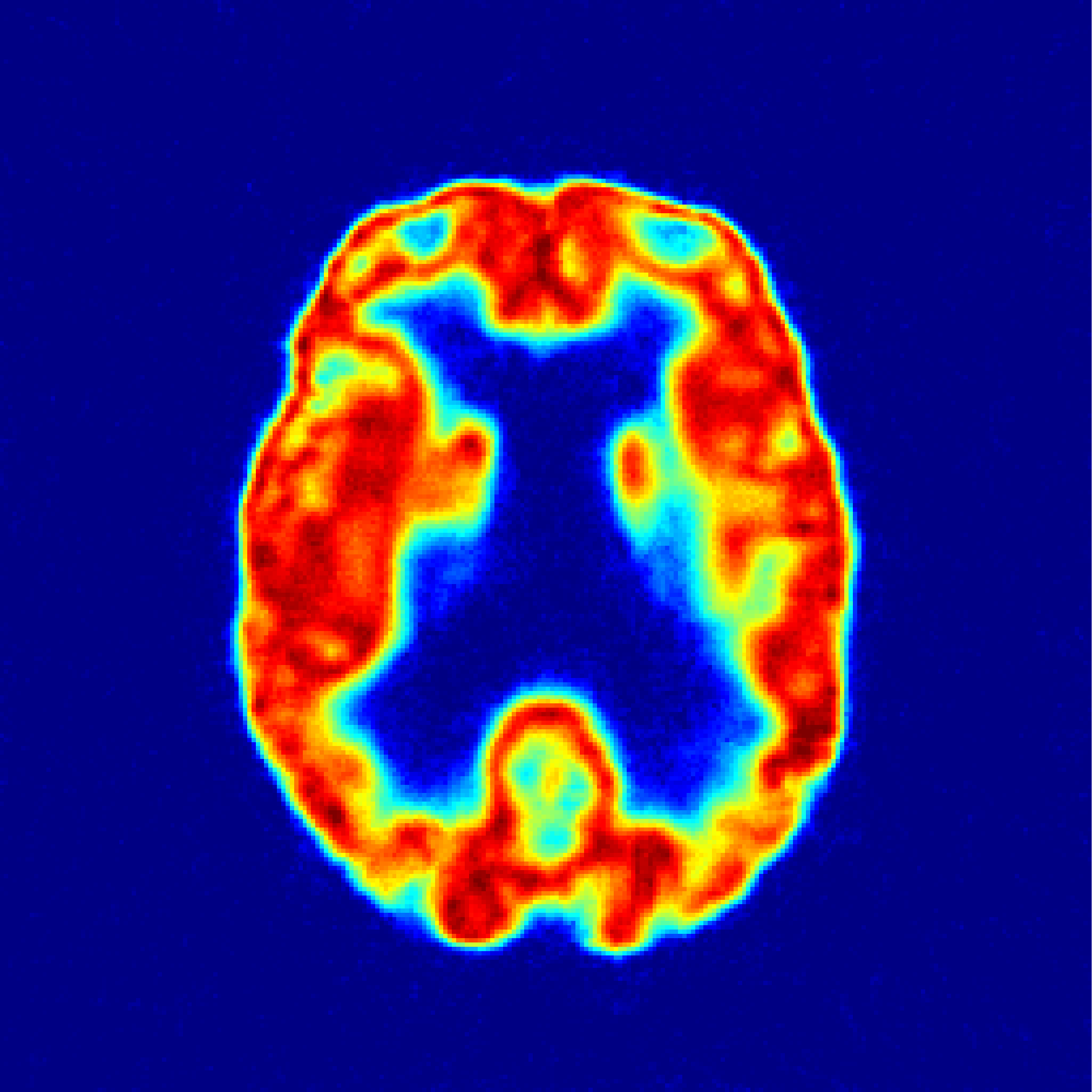}
		\end{minipage}\\
{\small{QPLS \cite{M.J.Ehrhardt2015}}}&\hspace{-0.45cm}
		{\small{JAnal \eqref{JAnalPETMRI}}}&\hspace{-0.45cm}
		{\small{JSTF \eqref{Proposed}}}&\hspace{-0.45cm}
		{\small{JSDDTF \eqref{DDTFPETMRI}}}\\
		\begin{minipage}{3cm}
			\includegraphics[width=3cm]{MRT215Original.pdf}
		\end{minipage}&\hspace{-0.45cm}
		\begin{minipage}{3cm}
			\includegraphics[width=3cm]{MRT215Radial30ZeroFill.pdf}
		\end{minipage}&\hspace{-0.45cm}
		\begin{minipage}{3cm}
			\includegraphics[width=3cm]{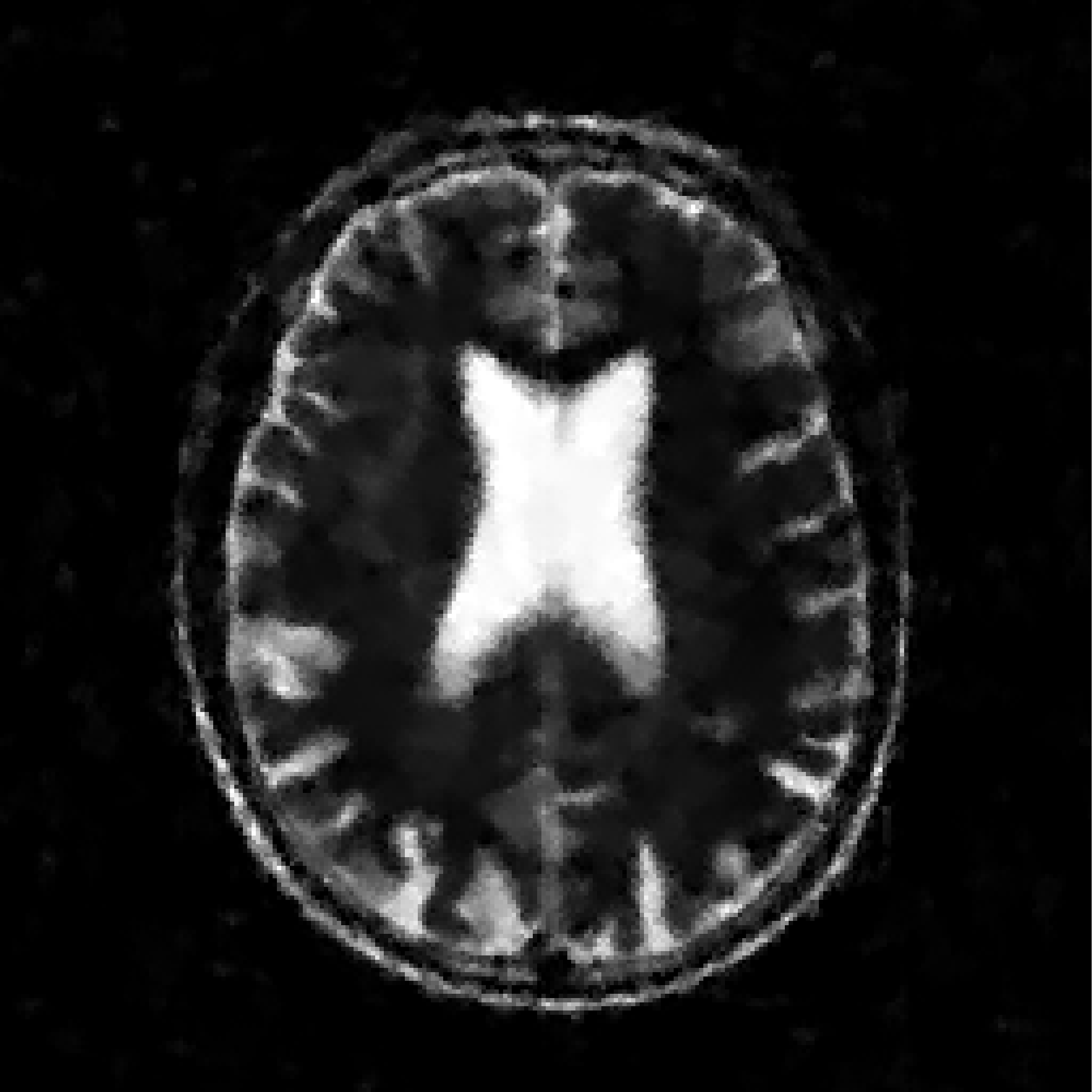}
		\end{minipage}&\hspace{-0.45cm}
\begin{minipage}{3cm}
\includegraphics[width=3cm]{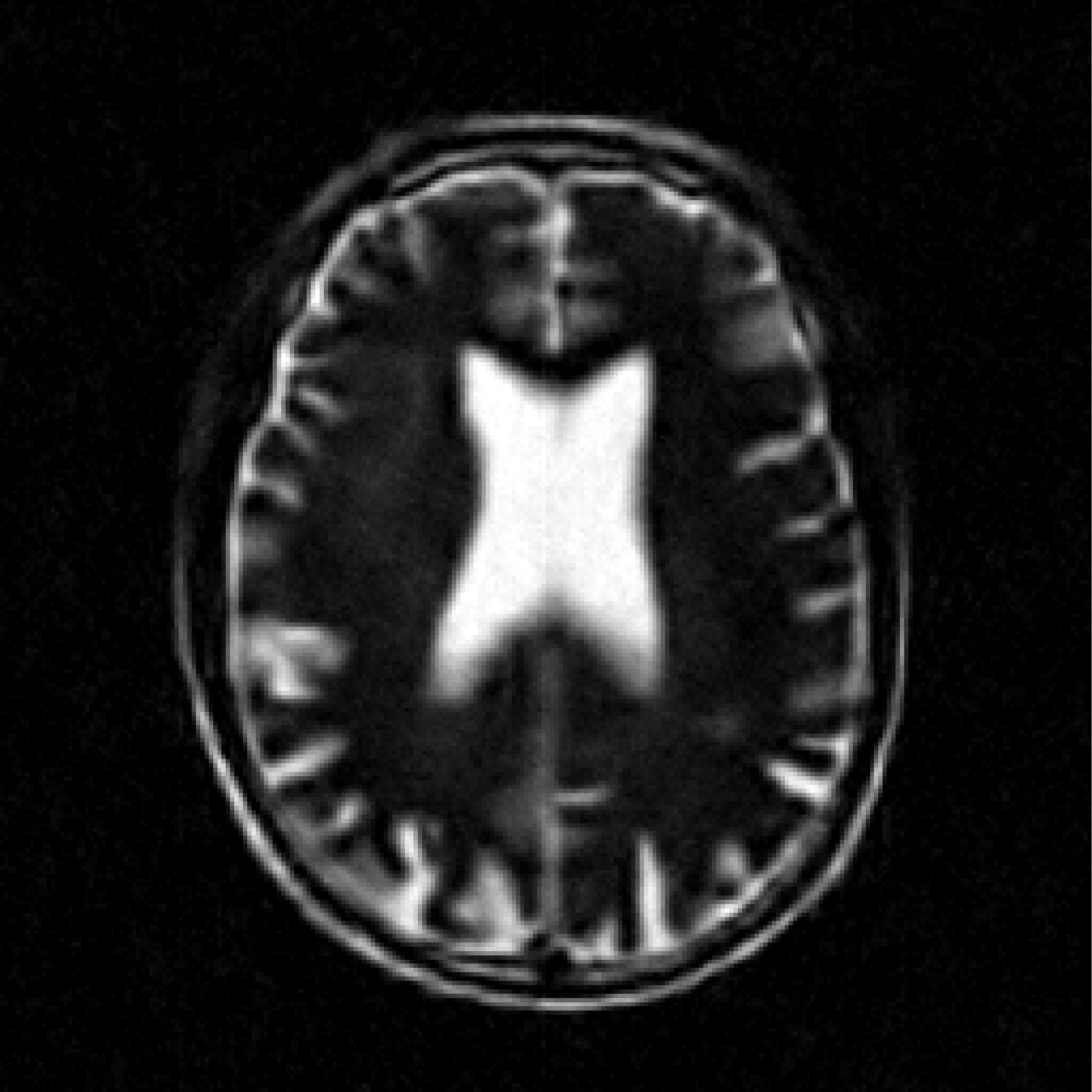}
\end{minipage}\\
		{\small{Original}}&\hspace{-0.45cm}
{\small{Initial}}&\hspace{-0.45cm}
{\small{Analysis \eqref{AnaMRI}}}&\hspace{-0.45cm}
		{\small{DDTF \eqref{DDTFMRI}}}\\
\begin{minipage}{3cm}
			\includegraphics[width=3cm]{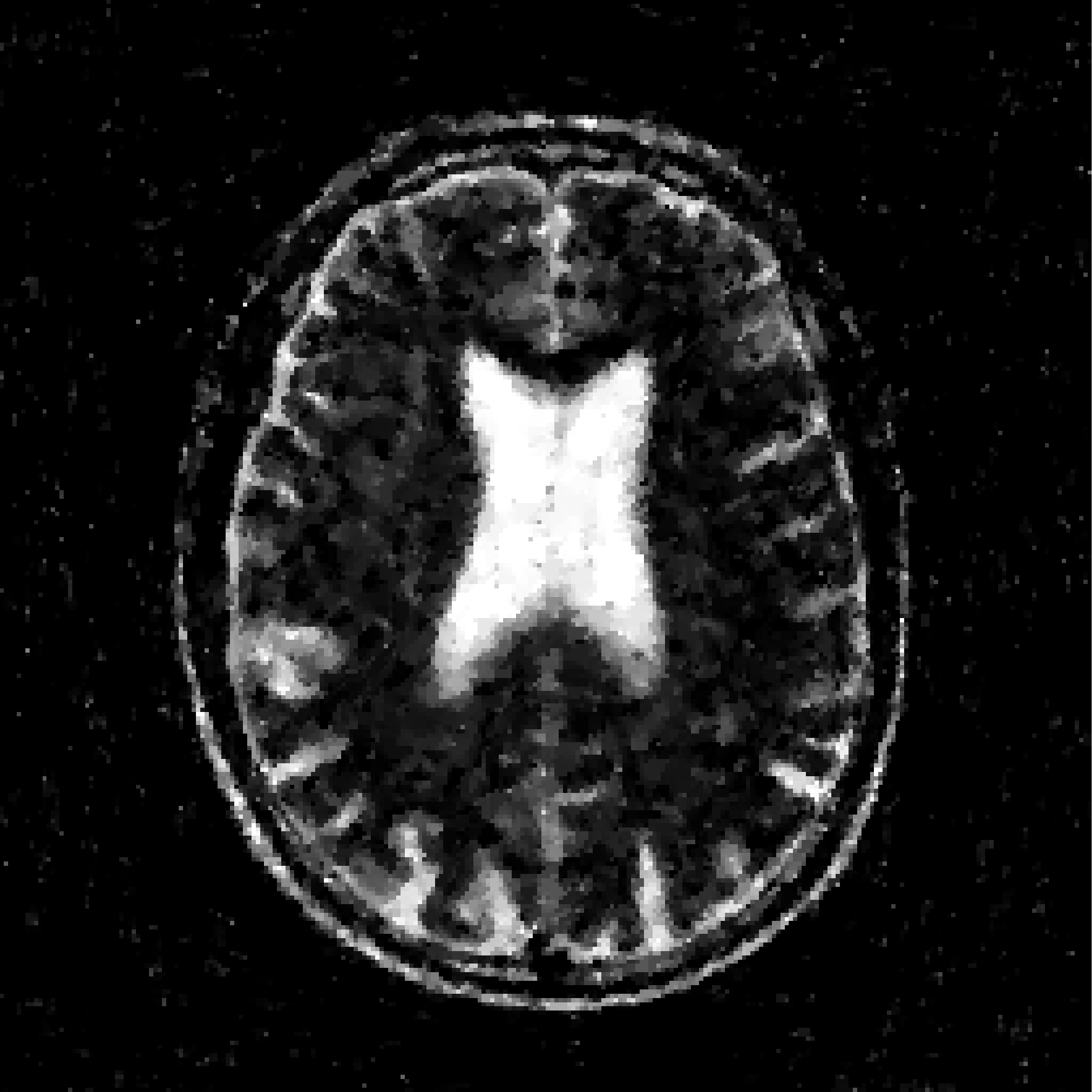}
		\end{minipage}&\hspace{-0.45cm}
		\begin{minipage}{3cm}
			\includegraphics[width=3cm]{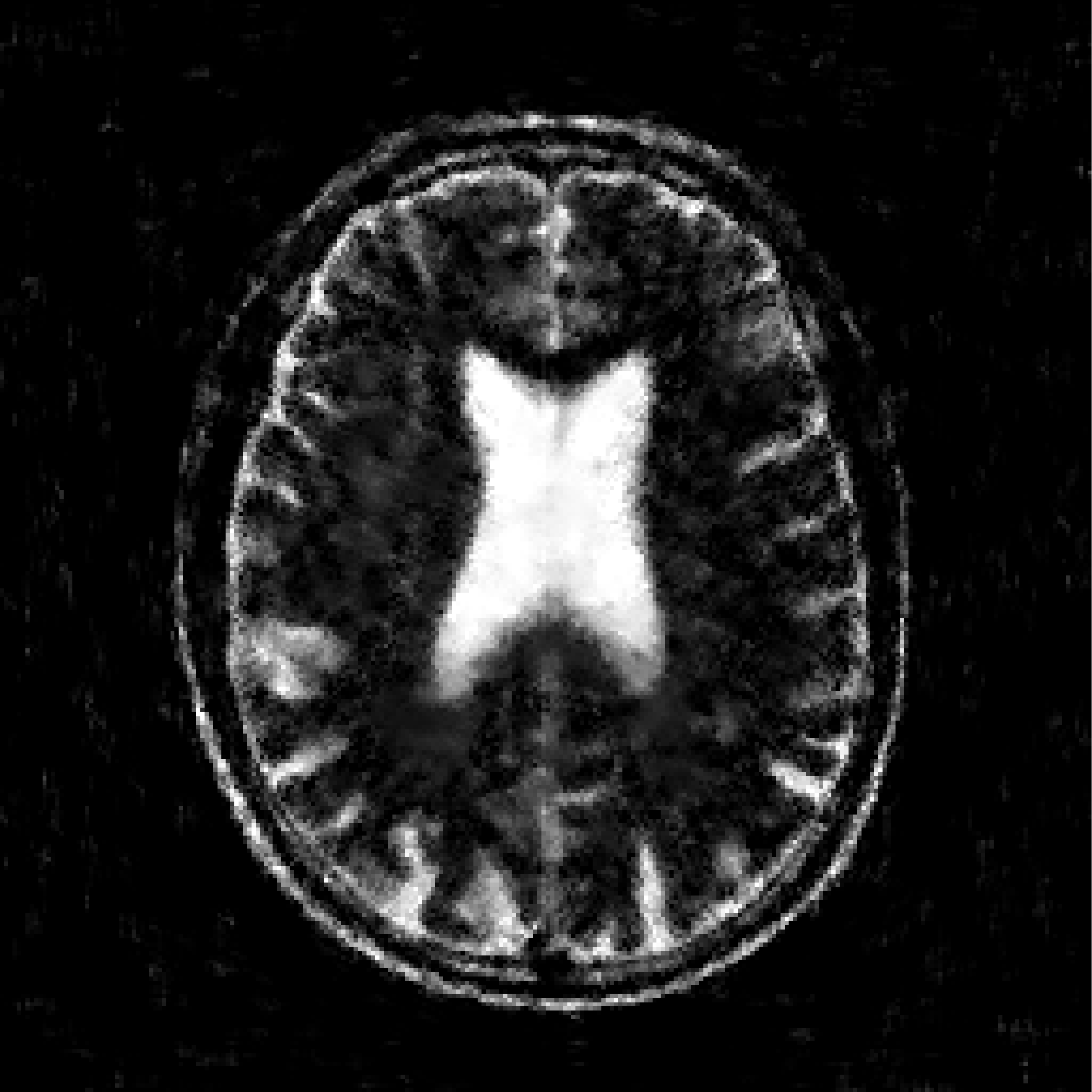}
		\end{minipage}&\hspace{-0.45cm}
		\begin{minipage}{3cm}
			\includegraphics[width=3cm]{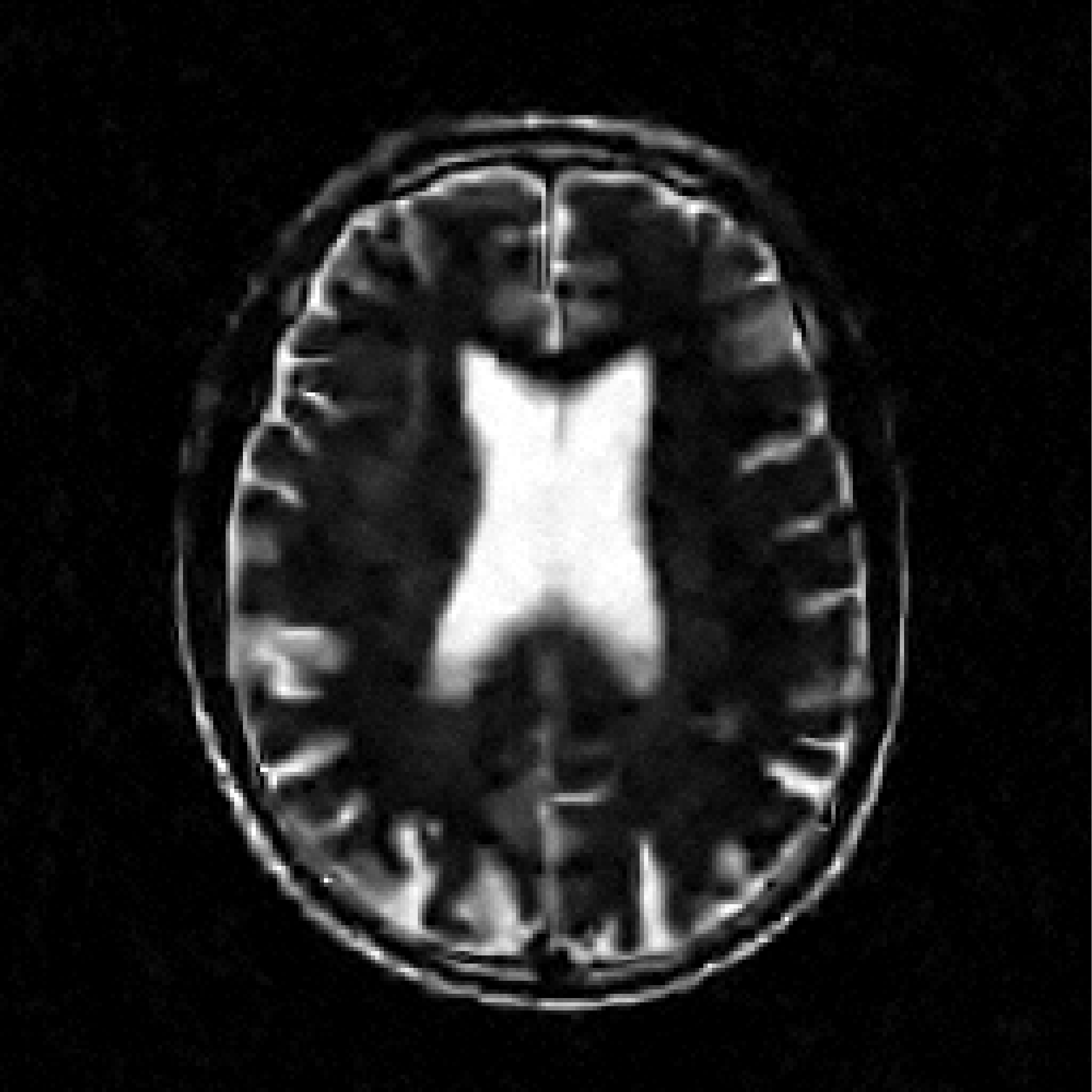}
		\end{minipage}&\hspace{-0.45cm}
		\begin{minipage}{3cm}
			\includegraphics[width=3cm]{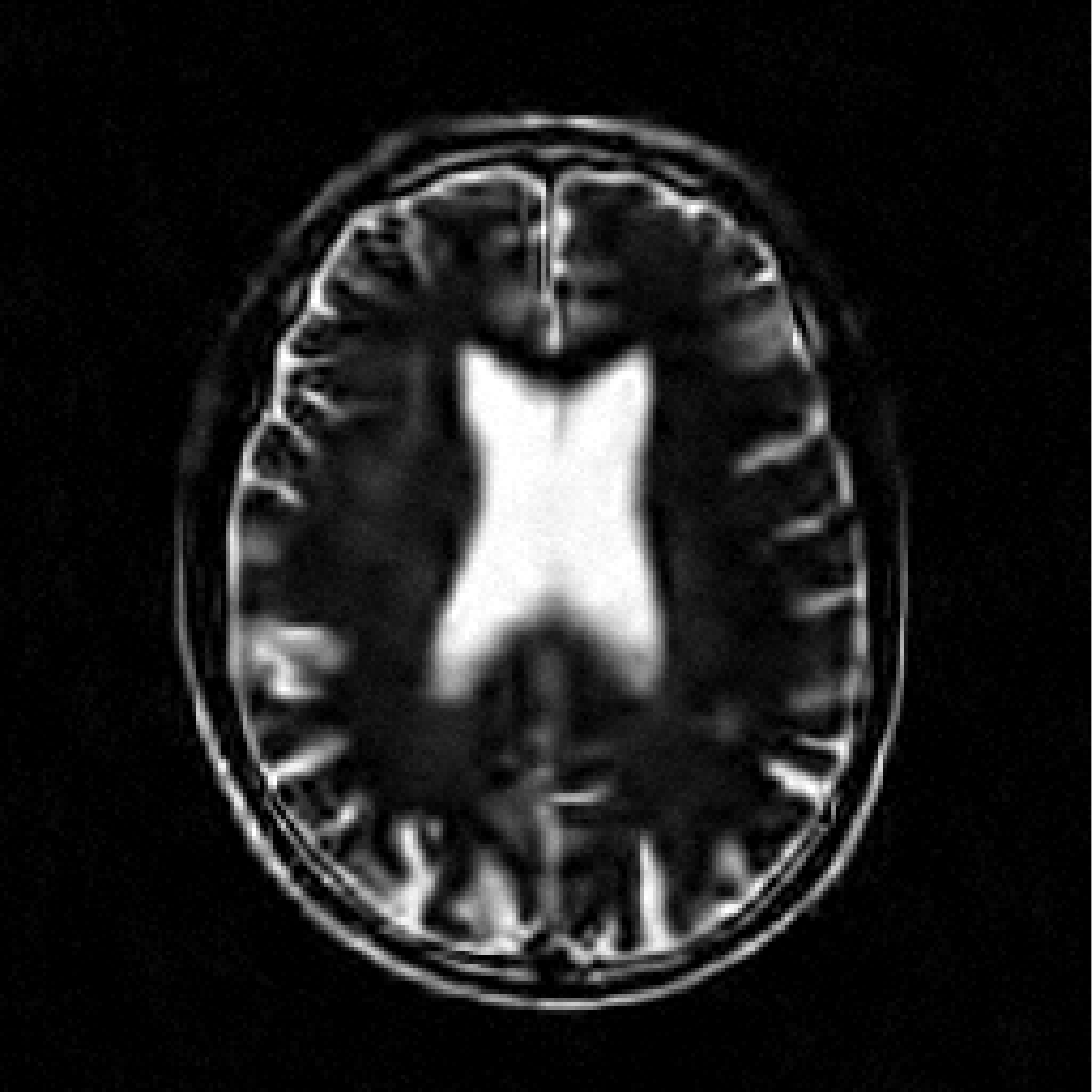}
		\end{minipage}\\
{\small{QPLS \cite{M.J.Ehrhardt2015}}}&\hspace{-0.45cm}
		{\small{JAnal \eqref{JAnalPETMRI}}}&\hspace{-0.45cm}
		{\small{JSTF \eqref{Proposed}}}&\hspace{-0.45cm}
		{\small{JSDDTF \eqref{DDTFPETMRI}}}
	\end{tabular}
	\caption{Visual comparison of PET-T2 Radial joint reconstruction results. The first and second rows describe the PET images, and the third and fourth rows depict the MRI images.}\label{PETMRT215RadialResults}
\end{figure}

\begin{figure}[htp!]
	\centering
	\begin{tabular}{cccc}
		\begin{minipage}{3cm}
			\includegraphics[width=3cm]{PET15Original.pdf}
		\end{minipage}&\hspace{-0.45cm}
\begin{minipage}{3cm}
			\includegraphics[width=3cm]{PET15EM.pdf}
		\end{minipage}&\hspace{-0.45cm}
		\begin{minipage}{3cm}
			\includegraphics[width=3cm]{PET15Anal.pdf}
		\end{minipage}&\hspace{-0.45cm}
		\begin{minipage}{3cm}
			\includegraphics[width=3cm]{PET15DDTF.pdf}
		\end{minipage}\\
		{\small{Original}}&\hspace{-0.45cm}
{\small{Initial}}&\hspace{-0.45cm}
		{\small{Analysis \eqref{AnaPET}}}&\hspace{-0.45cm}
		{\small{DDTF \eqref{DDTFPET}}}\\
		\begin{minipage}{3cm}
			\includegraphics[width=3cm]{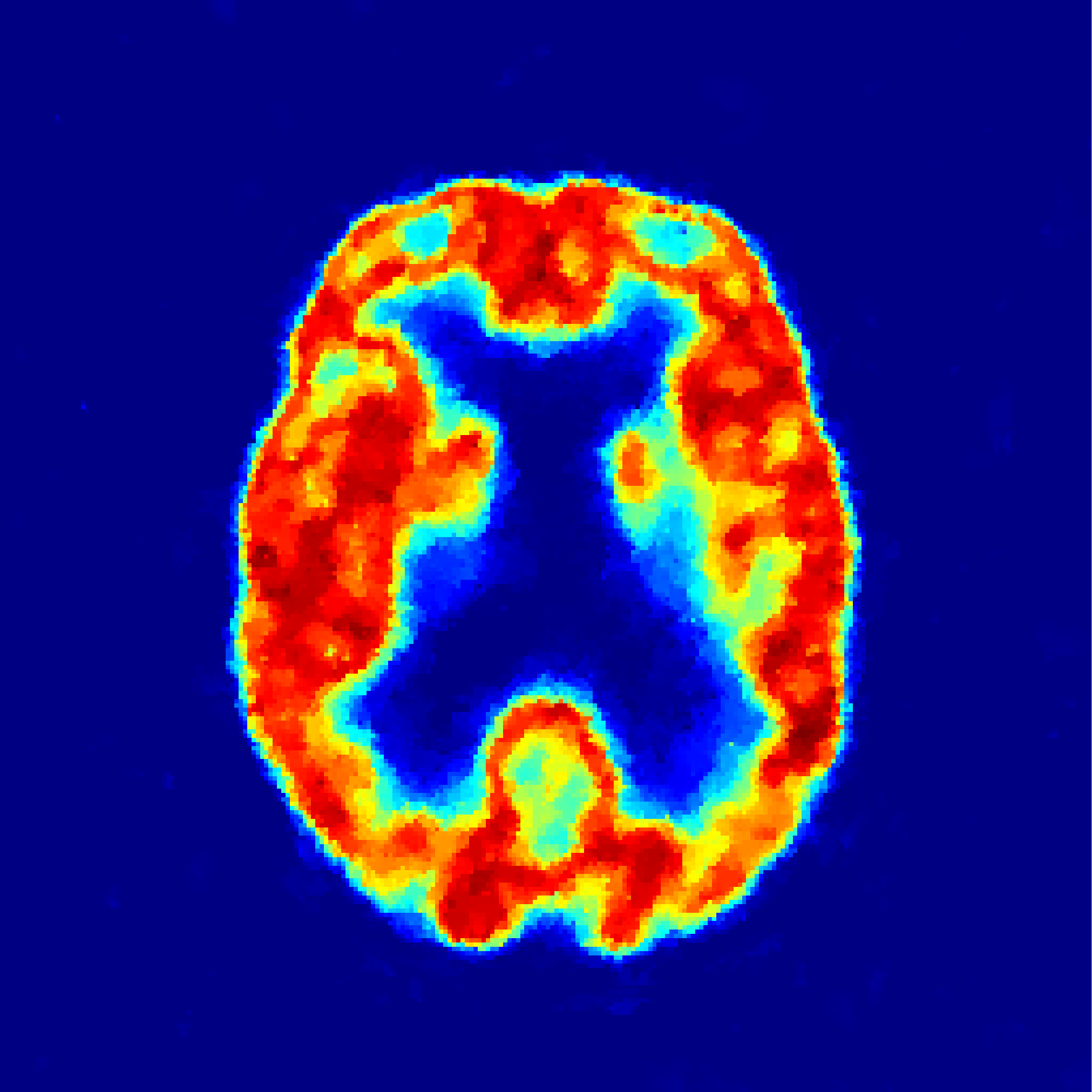}
		\end{minipage}&\hspace{-0.45cm}
		\begin{minipage}{3cm}
			\includegraphics[width=3cm]{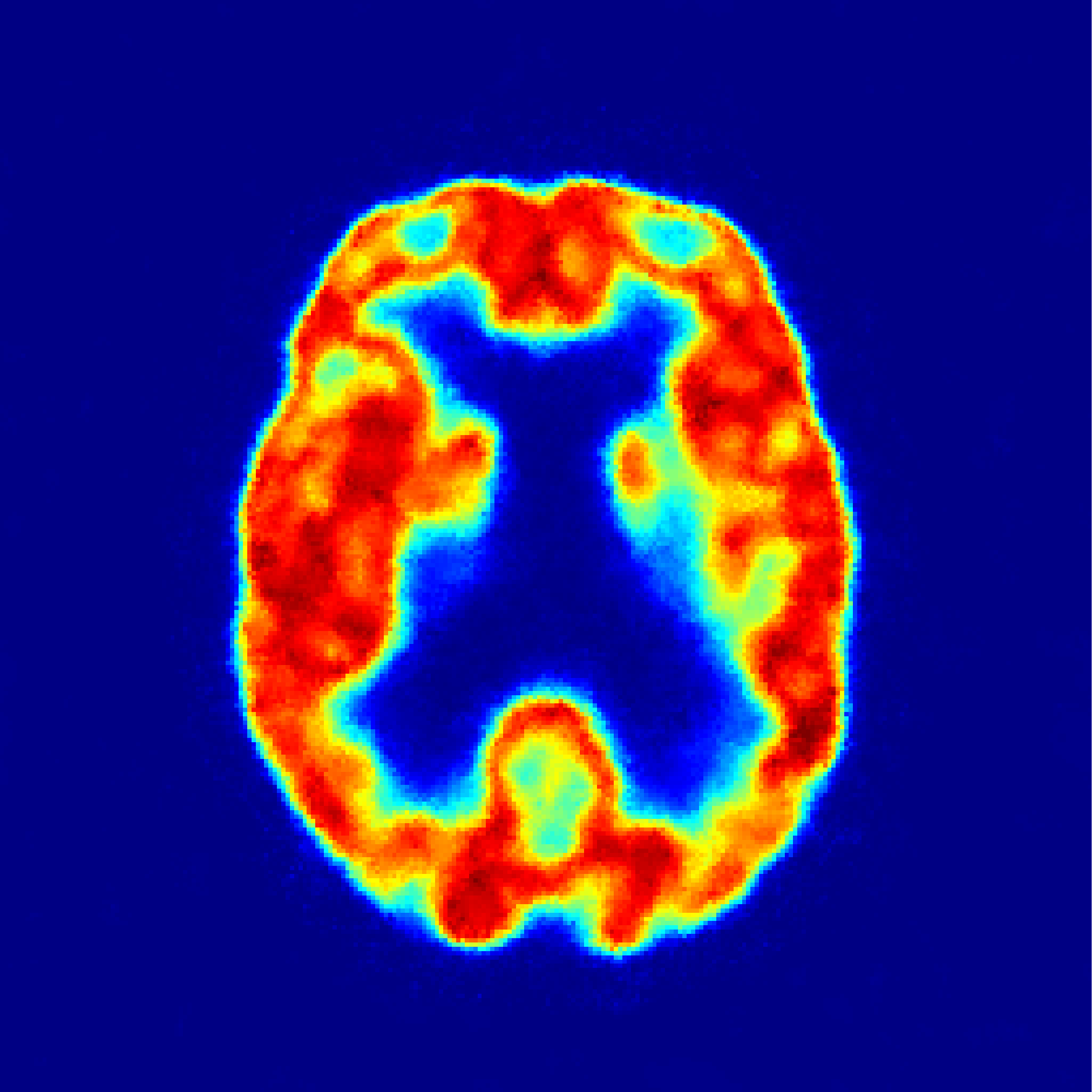}
		\end{minipage}&\hspace{-0.45cm}
		\begin{minipage}{3cm}
			\includegraphics[width=3cm]{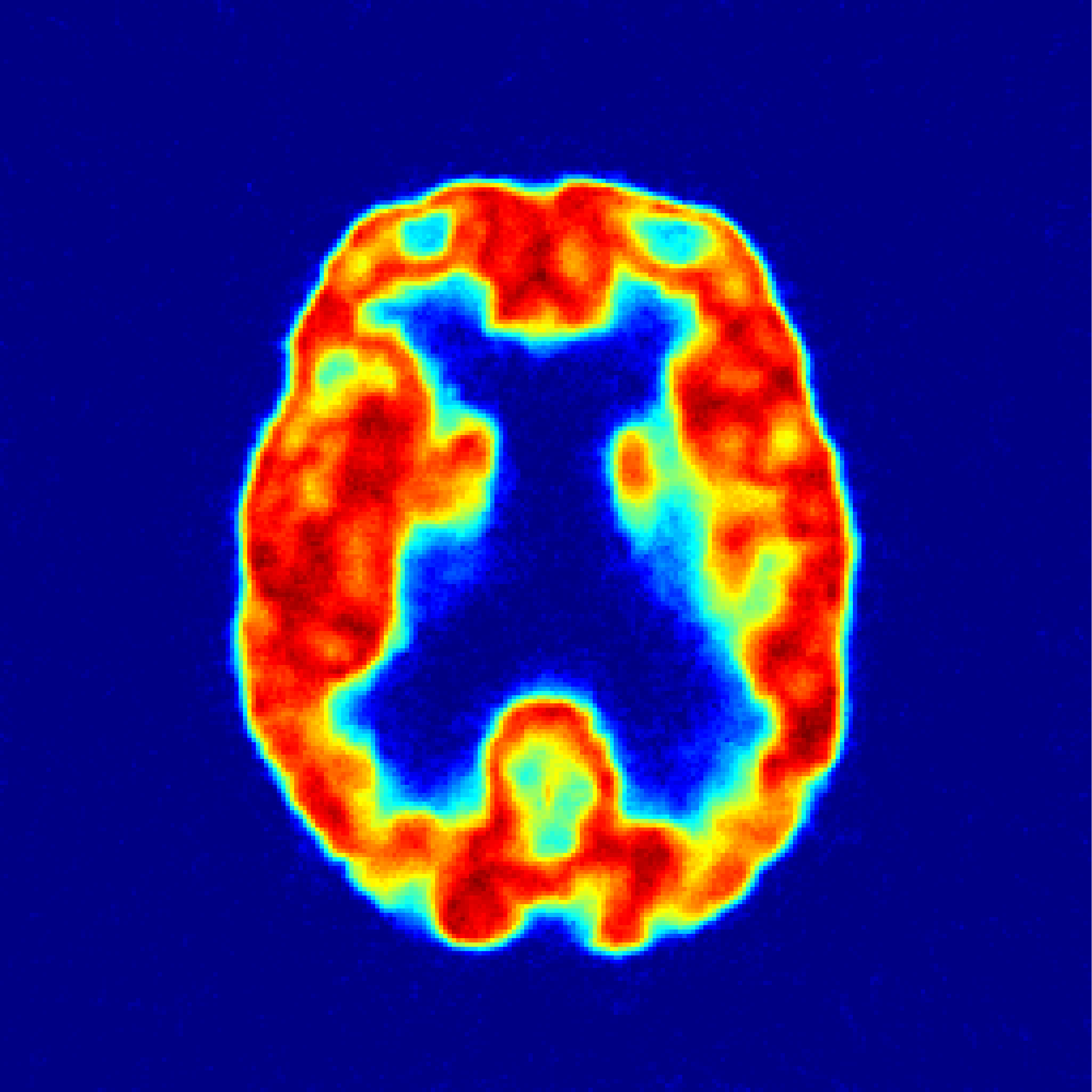}
		\end{minipage}&\hspace{-0.45cm}
		\begin{minipage}{3cm}
			\includegraphics[width=3cm]{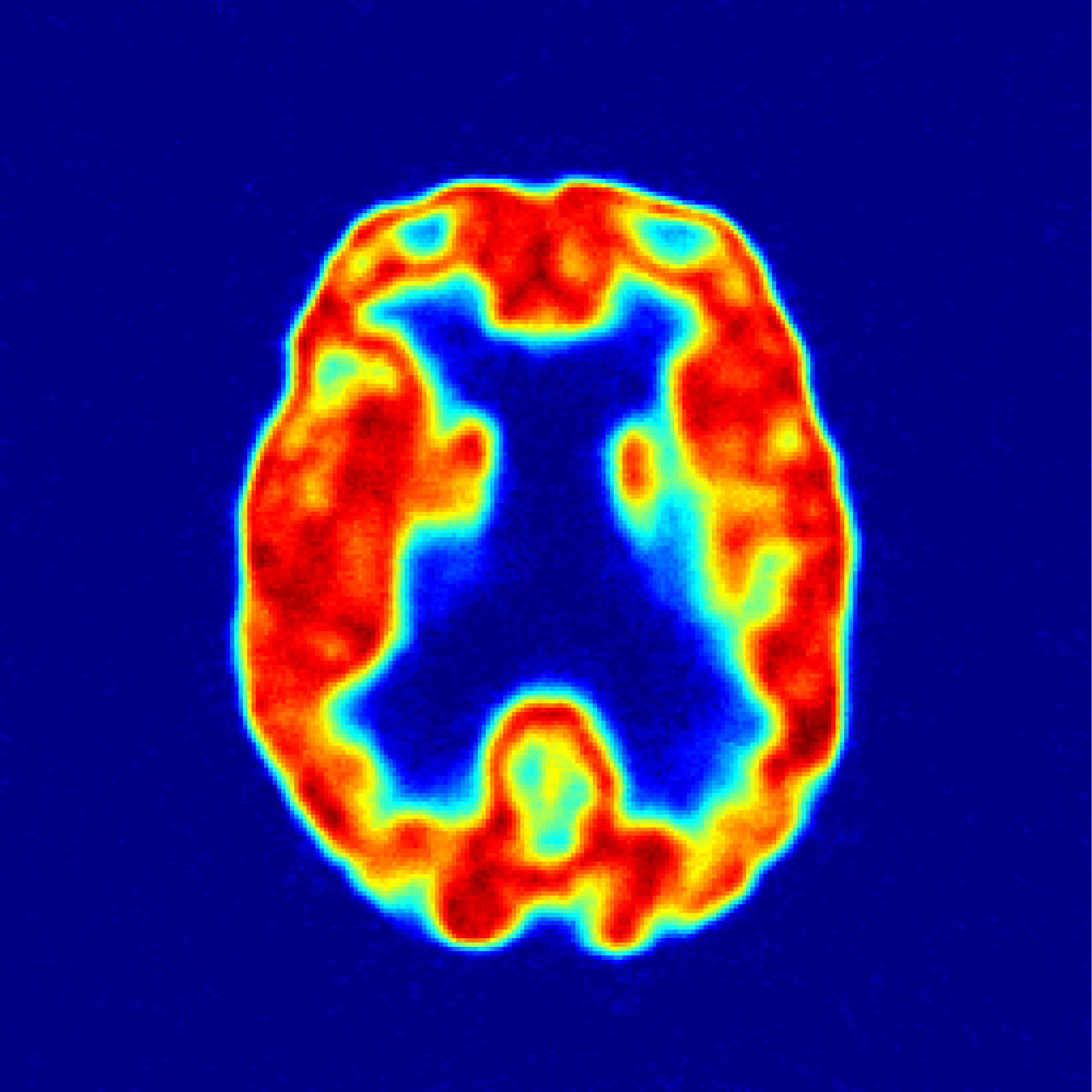}
		\end{minipage}\\
{\small{QPLS \cite{M.J.Ehrhardt2015}}}&\hspace{-0.45cm}
		{\small{JAnal \eqref{JAnalPETMRI}}}&\hspace{-0.45cm}
		{\small{JSTF \eqref{Proposed}}}&\hspace{-0.45cm}
		{\small{JSDDTF \eqref{DDTFPETMRI}}}\\
		\begin{minipage}{3cm}
			\includegraphics[width=3cm]{MRPD15Original.pdf}
		\end{minipage}&\hspace{-0.45cm}
		\begin{minipage}{3cm}
			\includegraphics[width=3cm]{MRPD15RandomZeroFill.pdf}
		\end{minipage}&\hspace{-0.45cm}
		\begin{minipage}{3cm}
			\includegraphics[width=3cm]{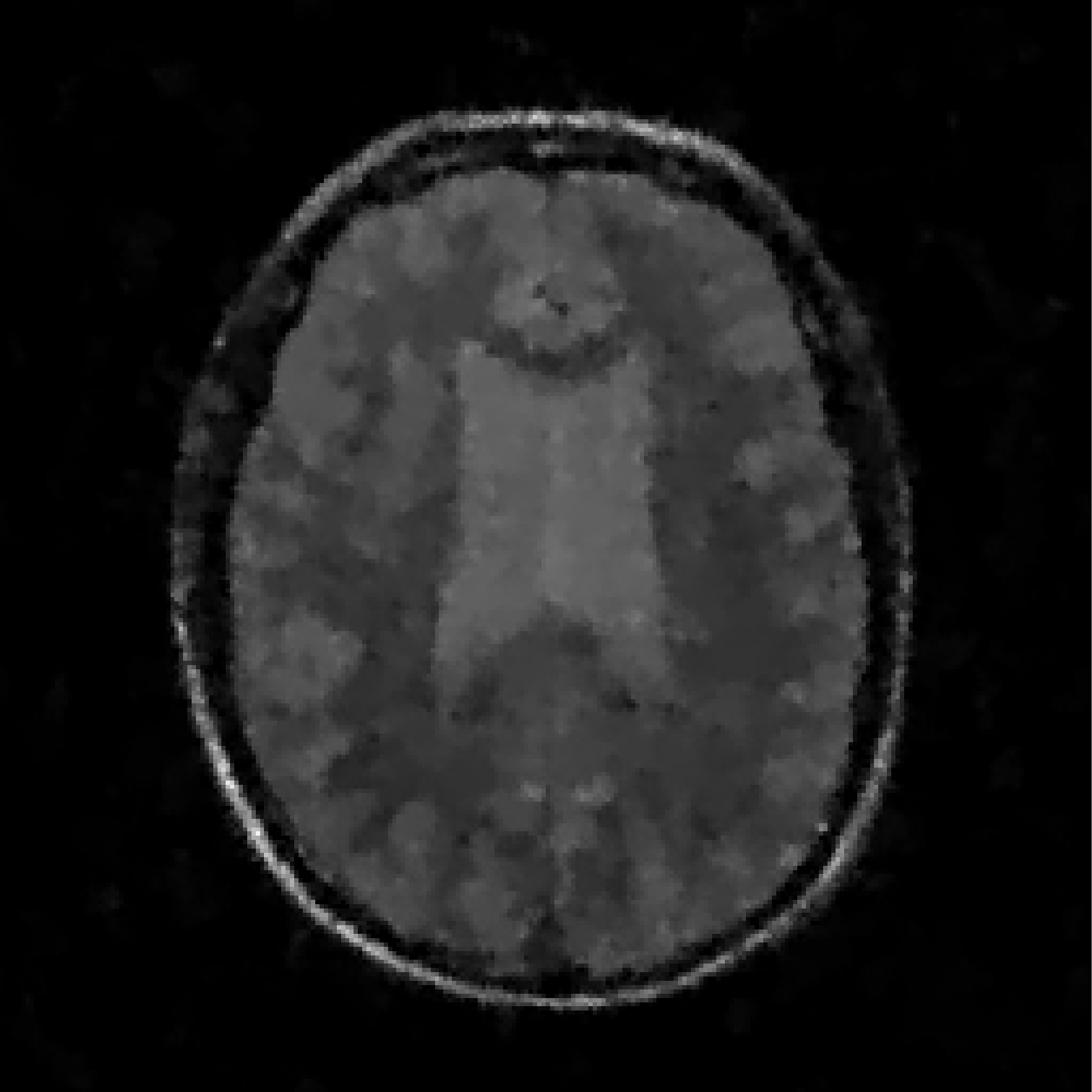}
		\end{minipage}&\hspace{-0.45cm}
\begin{minipage}{3cm}
\includegraphics[width=3cm]{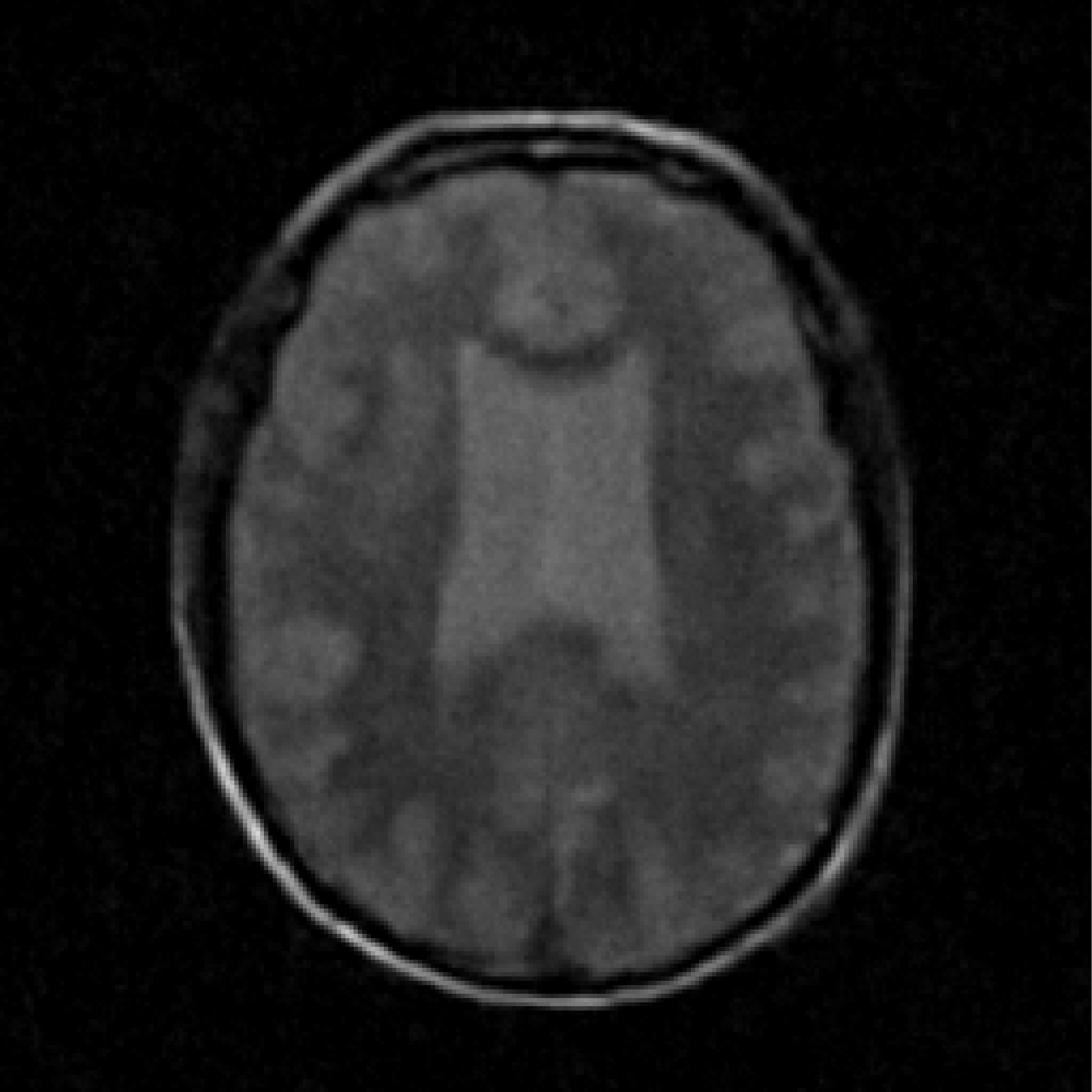}
\end{minipage}\\
		{\small{Original}}&\hspace{-0.45cm}
{\small{Initial}}&\hspace{-0.45cm}
{\small{Analysis \eqref{AnaMRI}}}&\hspace{-0.45cm}
		{\small{DDTF \eqref{DDTFMRI}}}\\
\begin{minipage}{3cm}
			\includegraphics[width=3cm]{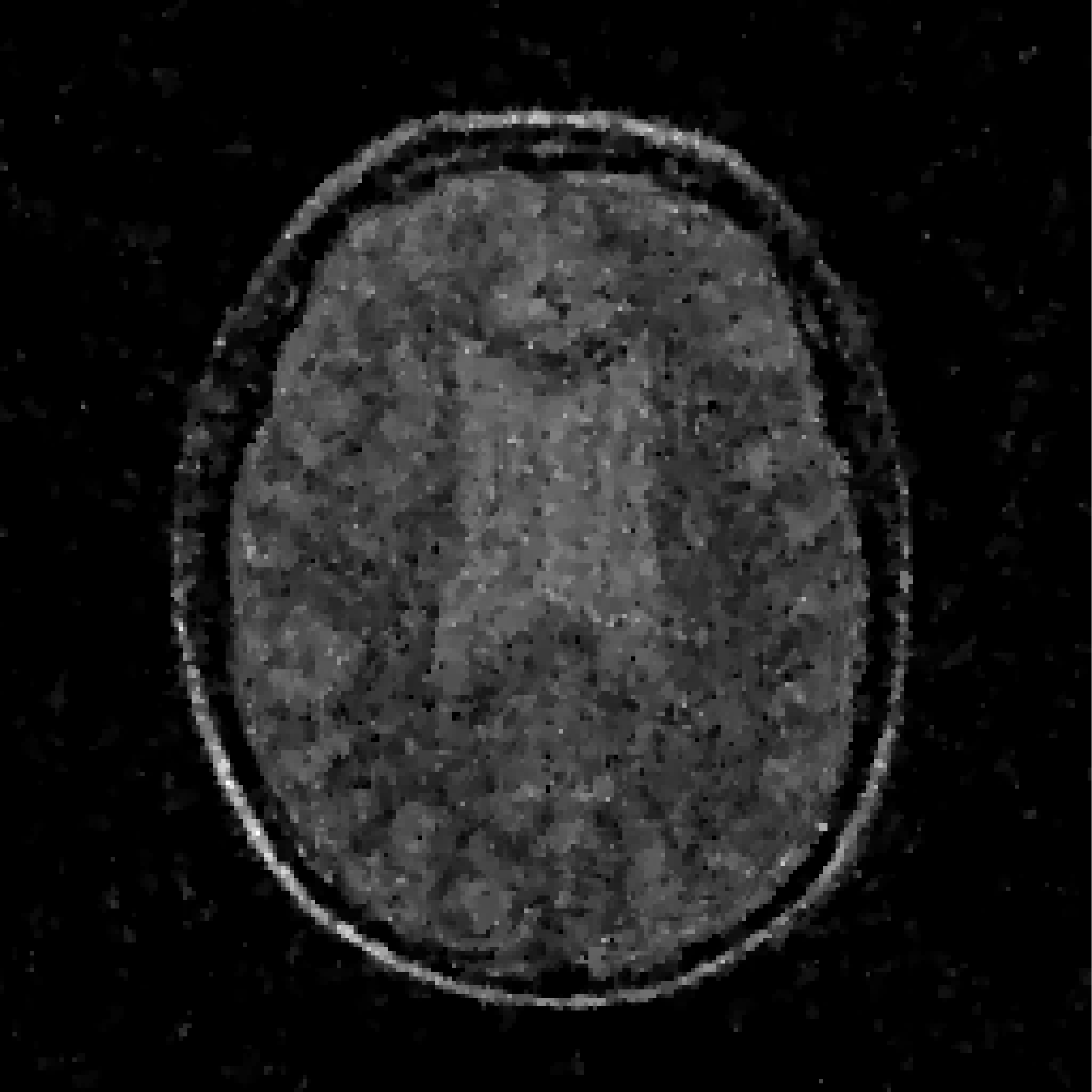}
		\end{minipage}&\hspace{-0.45cm}
		\begin{minipage}{3cm}
			\includegraphics[width=3cm]{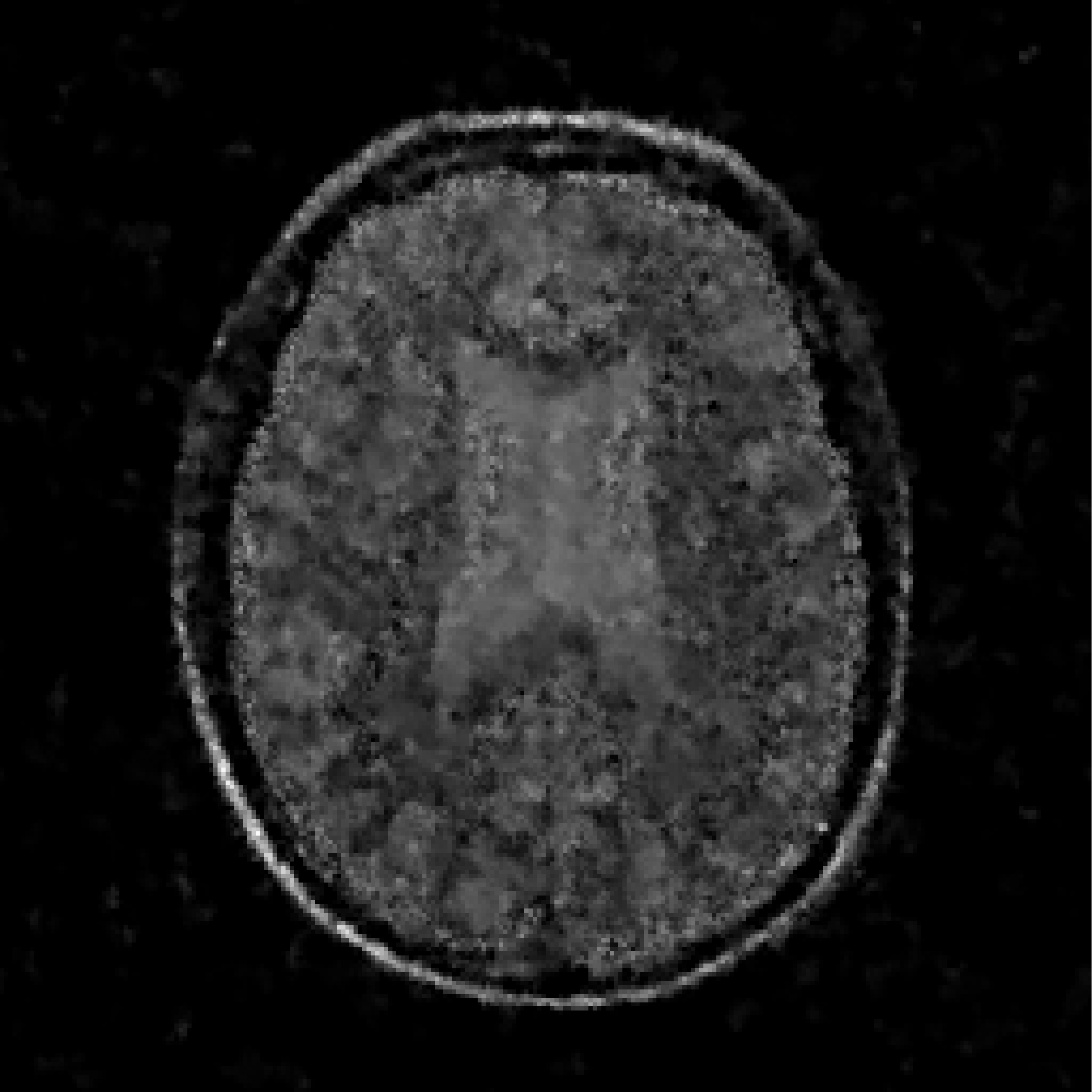}
		\end{minipage}&\hspace{-0.45cm}
		\begin{minipage}{3cm}
			\includegraphics[width=3cm]{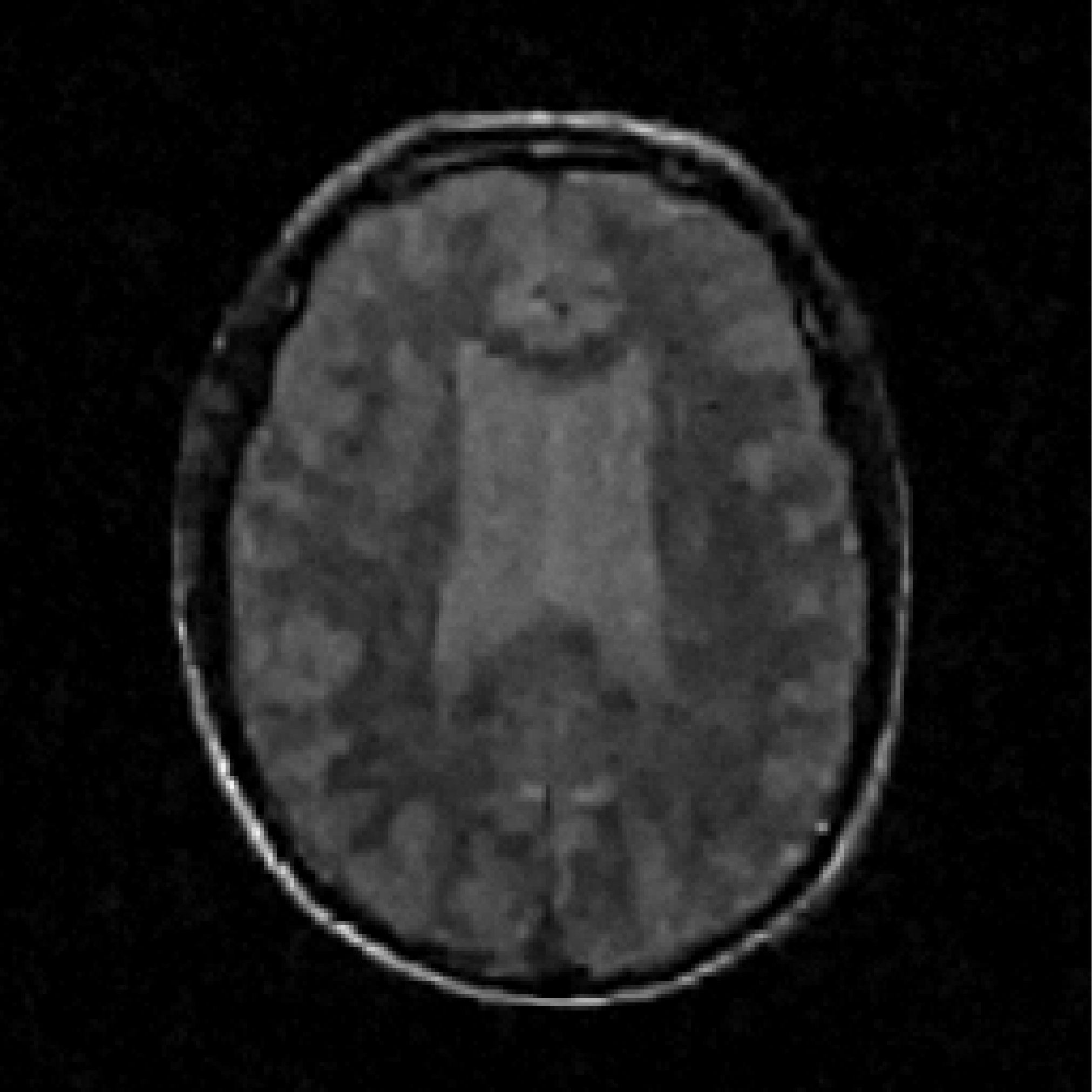}
		\end{minipage}&\hspace{-0.45cm}
		\begin{minipage}{3cm}
			\includegraphics[width=3cm]{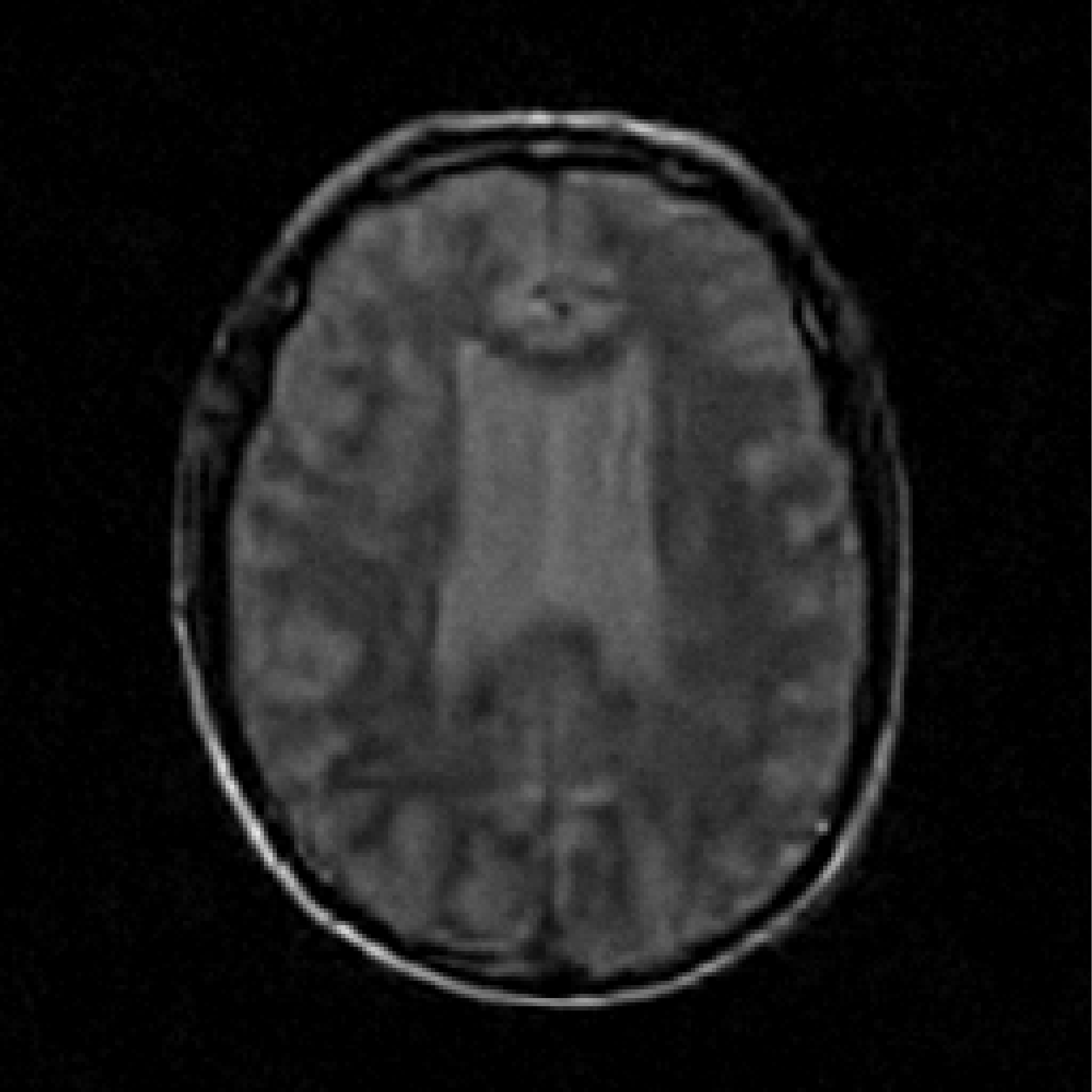}
		\end{minipage}\\
{\small{QPLS \cite{M.J.Ehrhardt2015}}}&\hspace{-0.45cm}
		{\small{JAnal \eqref{JAnalPETMRI}}}&\hspace{-0.45cm}
		{\small{JSTF \eqref{Proposed}}}&\hspace{-0.45cm}
		{\small{JSDDTF \eqref{DDTFPETMRI}}}
	\end{tabular}
	\caption{Visual comparison of PET-PD Random joint reconstruction results. The first and second rows describe the PET images, and the third and fourth rows depict the MRI images.}\label{PETMRPD15RandomResults}
\end{figure}

\begin{figure}[htp!]
	\centering
	\begin{tabular}{cccc}
		\begin{minipage}{3cm}
			\includegraphics[width=3cm]{PET15Original.pdf}
		\end{minipage}&\hspace{-0.45cm}
\begin{minipage}{3cm}
			\includegraphics[width=3cm]{PET15EM.pdf}
		\end{minipage}&\hspace{-0.45cm}
		\begin{minipage}{3cm}
			\includegraphics[width=3cm]{PET15Anal.pdf}
		\end{minipage}&\hspace{-0.45cm}
		\begin{minipage}{3cm}
			\includegraphics[width=3cm]{PET15DDTF.pdf}
		\end{minipage}\\
		{\small{Original}}&\hspace{-0.45cm}
{\small{Initial}}&\hspace{-0.45cm}
		{\small{Analysis \eqref{AnaPET}}}&\hspace{-0.45cm}
		{\small{DDTF \eqref{DDTFPET}}}\\
		\begin{minipage}{3cm}
			\includegraphics[width=3cm]{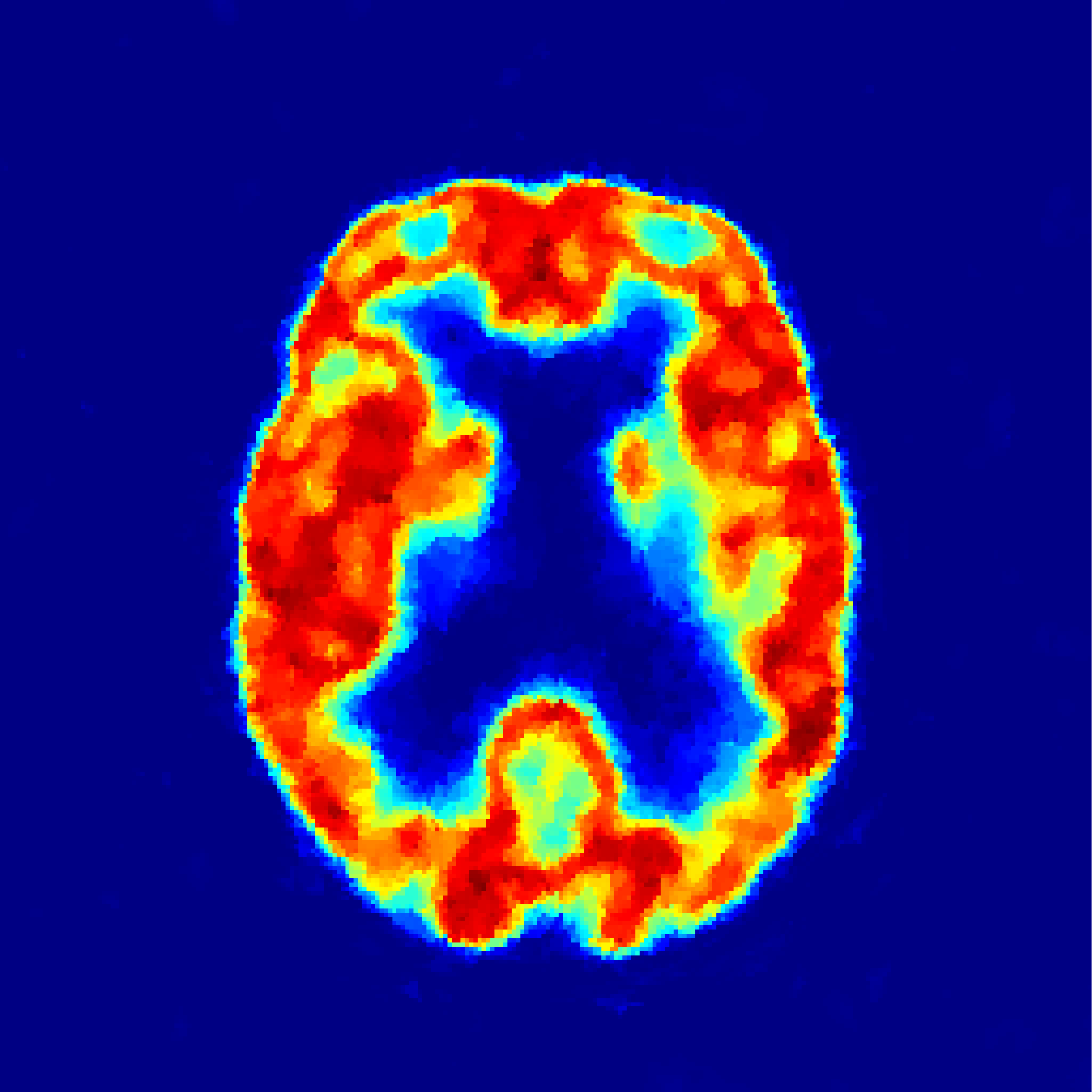}
		\end{minipage}&\hspace{-0.45cm}
		\begin{minipage}{3cm}
			\includegraphics[width=3cm]{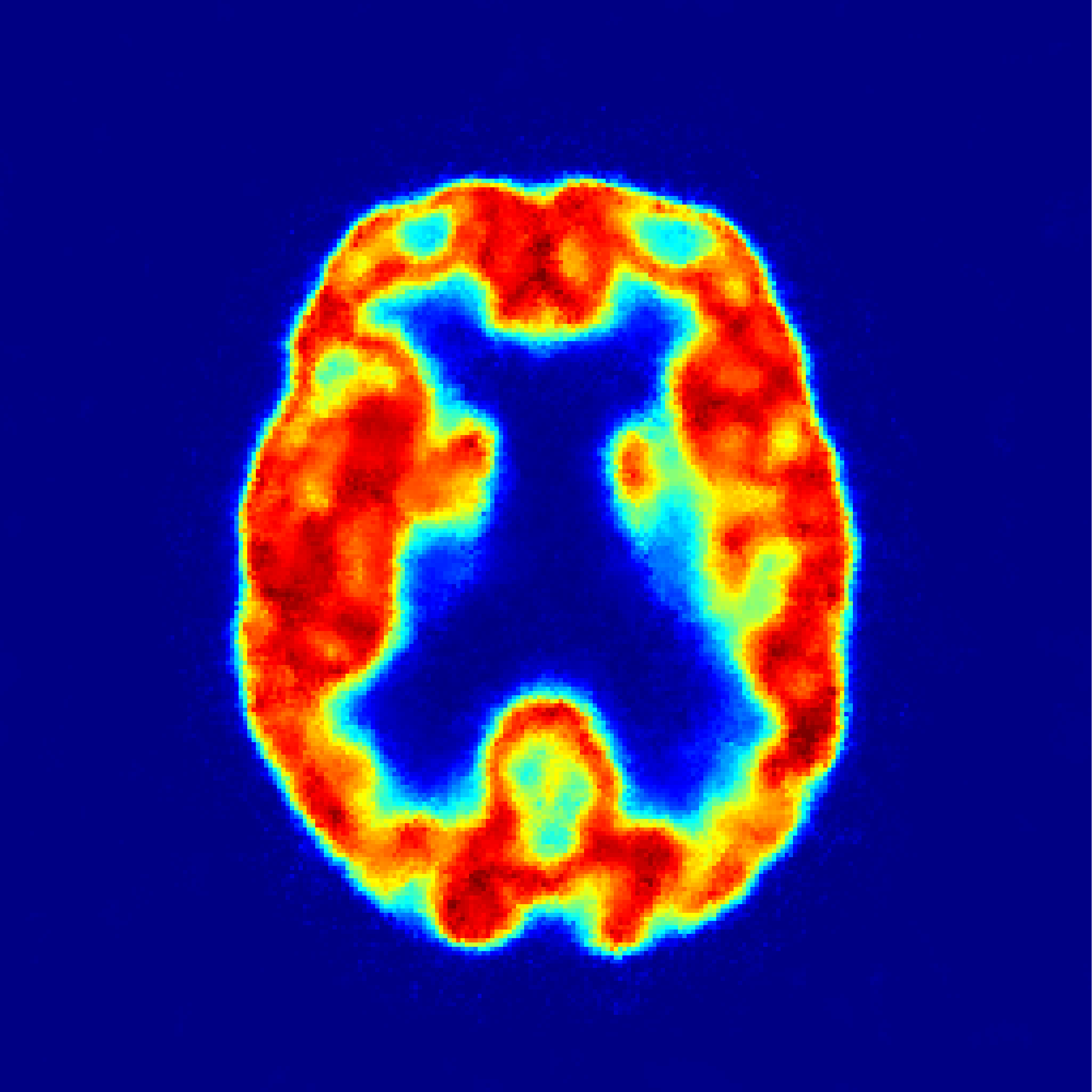}
		\end{minipage}&\hspace{-0.45cm}
		\begin{minipage}{3cm}
			\includegraphics[width=3cm]{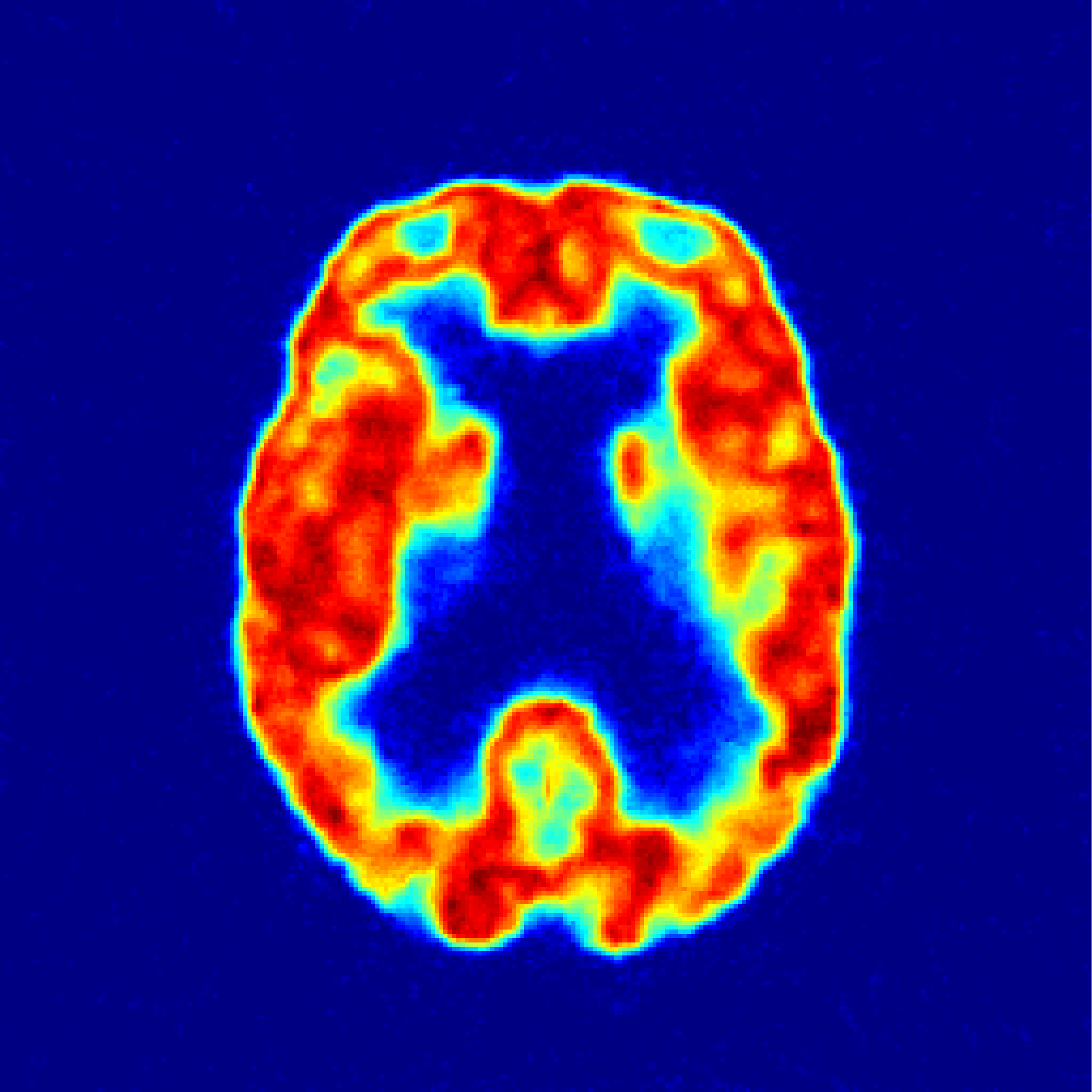}
		\end{minipage}&\hspace{-0.45cm}
		\begin{minipage}{3cm}
			\includegraphics[width=3cm]{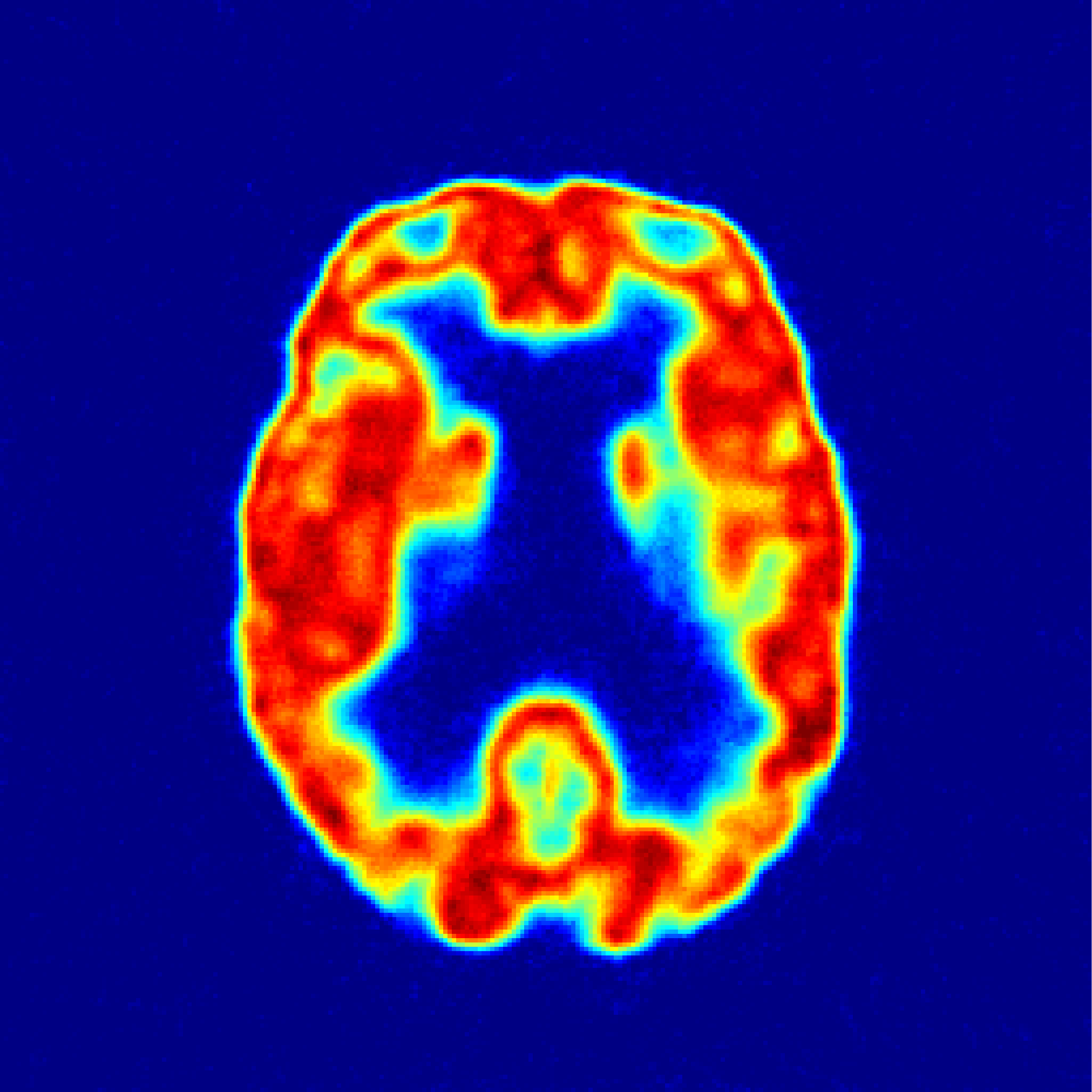}
		\end{minipage}\\
{\small{QPLS \cite{M.J.Ehrhardt2015}}}&\hspace{-0.45cm}
		{\small{JAnal \eqref{JAnalPETMRI}}}&\hspace{-0.45cm}
		{\small{JSTF \eqref{Proposed}}}&\hspace{-0.45cm}
		{\small{JSDDTF \eqref{DDTFPETMRI}}}\\
		\begin{minipage}{3cm}
			\includegraphics[width=3cm]{MRT115Original.pdf}
		\end{minipage}&\hspace{-0.45cm}
		\begin{minipage}{3cm}
			\includegraphics[width=3cm]{MRT115RandomZeroFill.pdf}
		\end{minipage}&\hspace{-0.45cm}
		\begin{minipage}{3cm}
			\includegraphics[width=3cm]{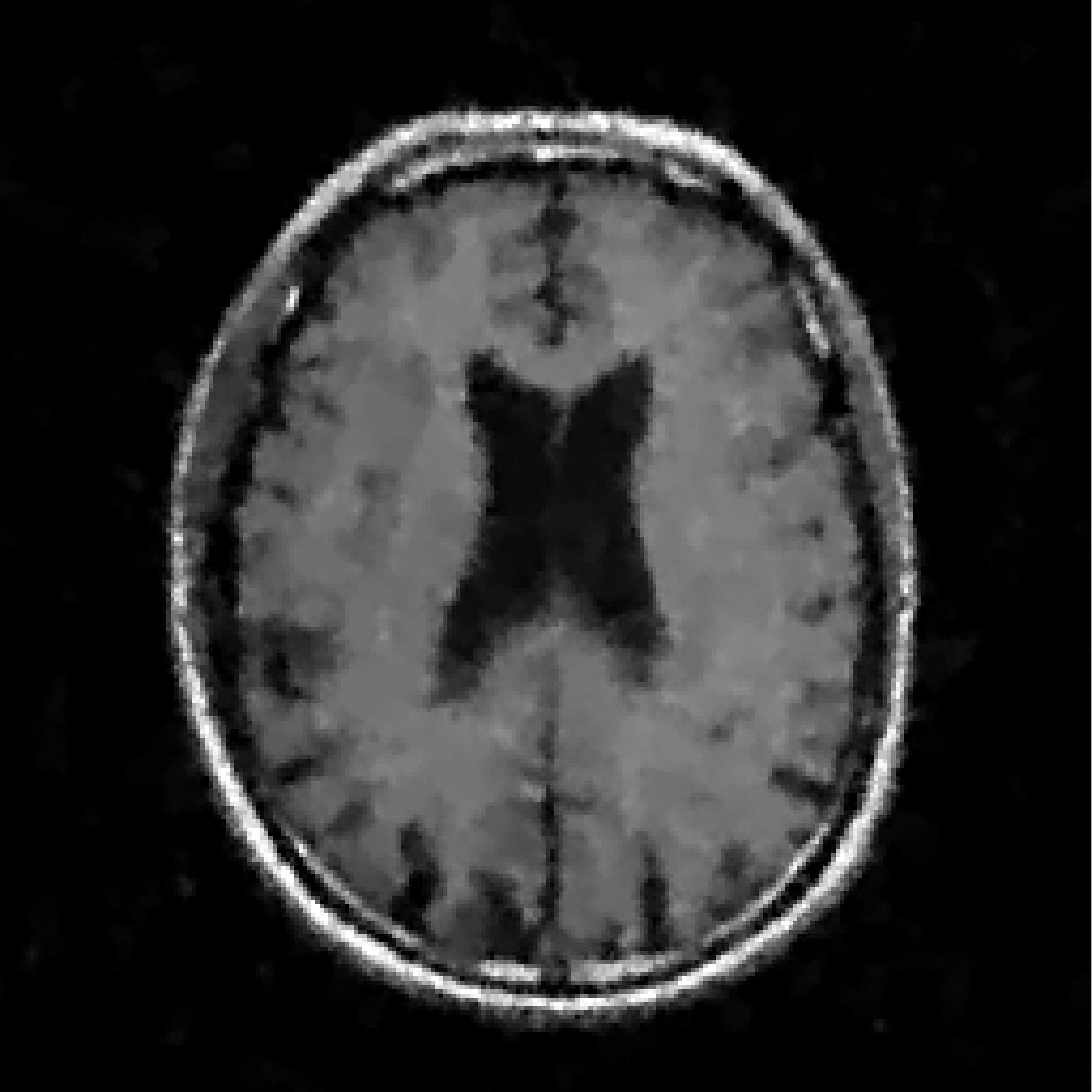}
		\end{minipage}&\hspace{-0.45cm}
\begin{minipage}{3cm}
\includegraphics[width=3cm]{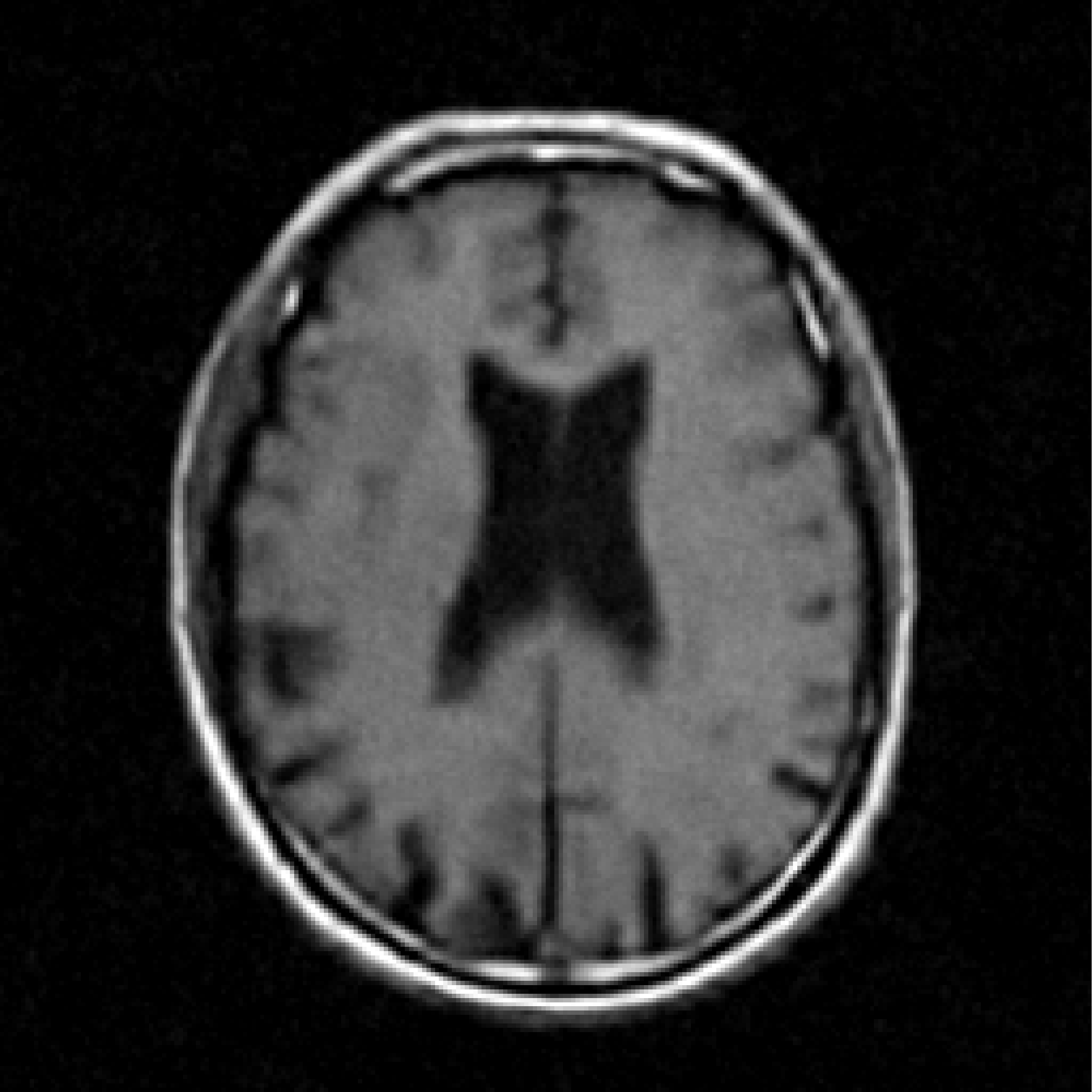}
\end{minipage}\\
		{\small{Original}}&\hspace{-0.45cm}
{\small{Initial}}&\hspace{-0.45cm}
{\small{Analysis \eqref{AnaMRI}}}&\hspace{-0.45cm}
		{\small{DDTF \eqref{DDTFMRI}}}\\
\begin{minipage}{3cm}
			\includegraphics[width=3cm]{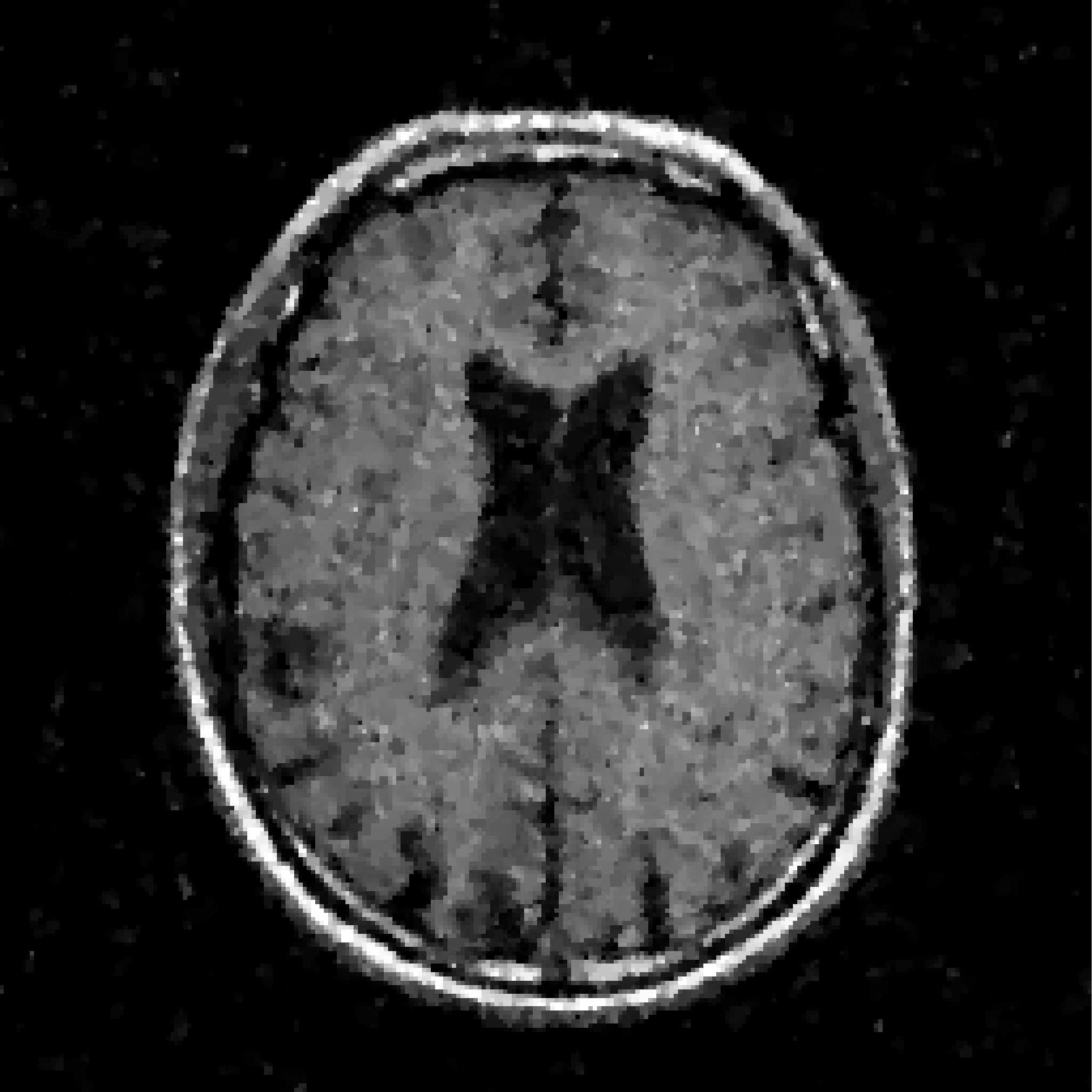}
		\end{minipage}&\hspace{-0.45cm}
		\begin{minipage}{3cm}
			\includegraphics[width=3cm]{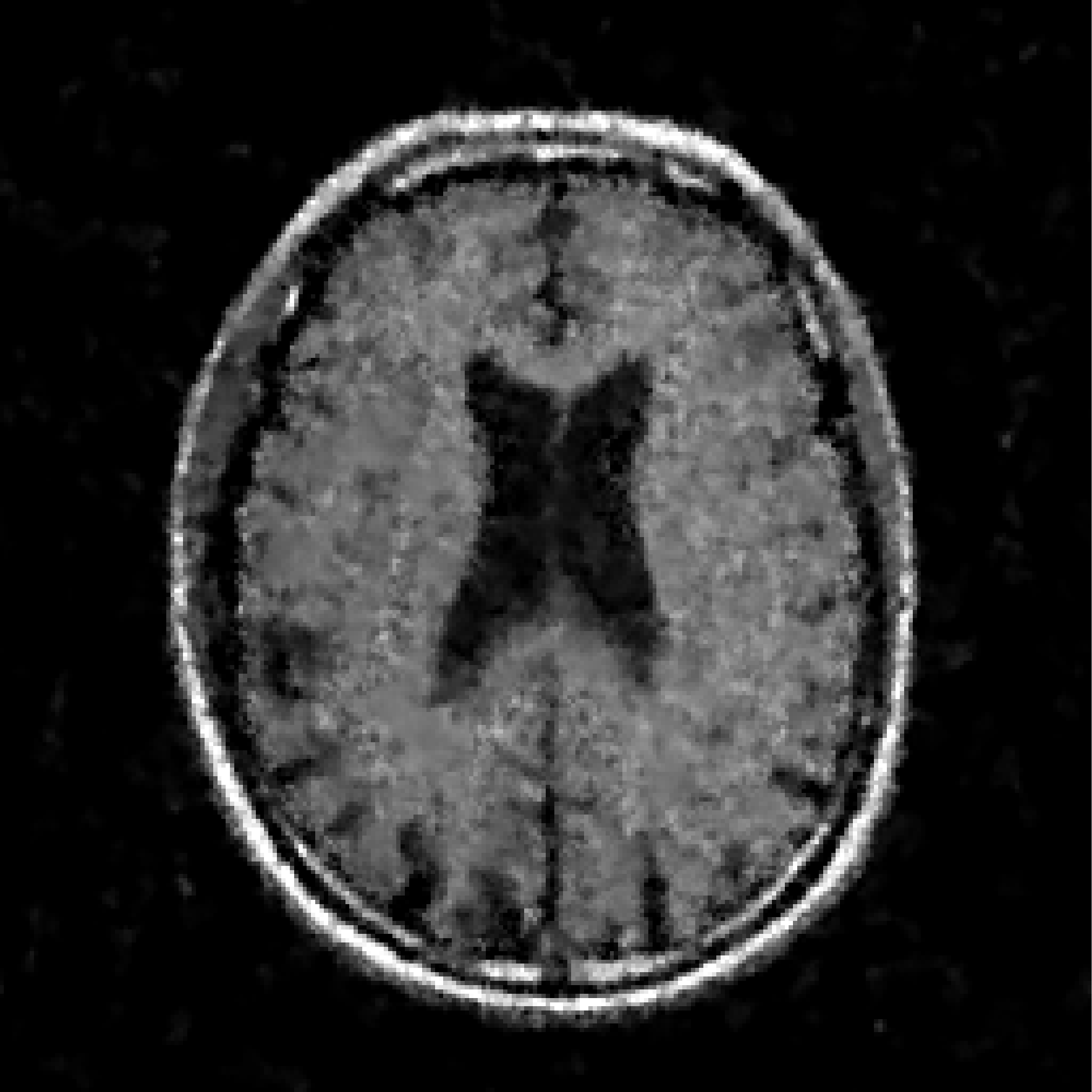}
		\end{minipage}&\hspace{-0.45cm}
		\begin{minipage}{3cm}
			\includegraphics[width=3cm]{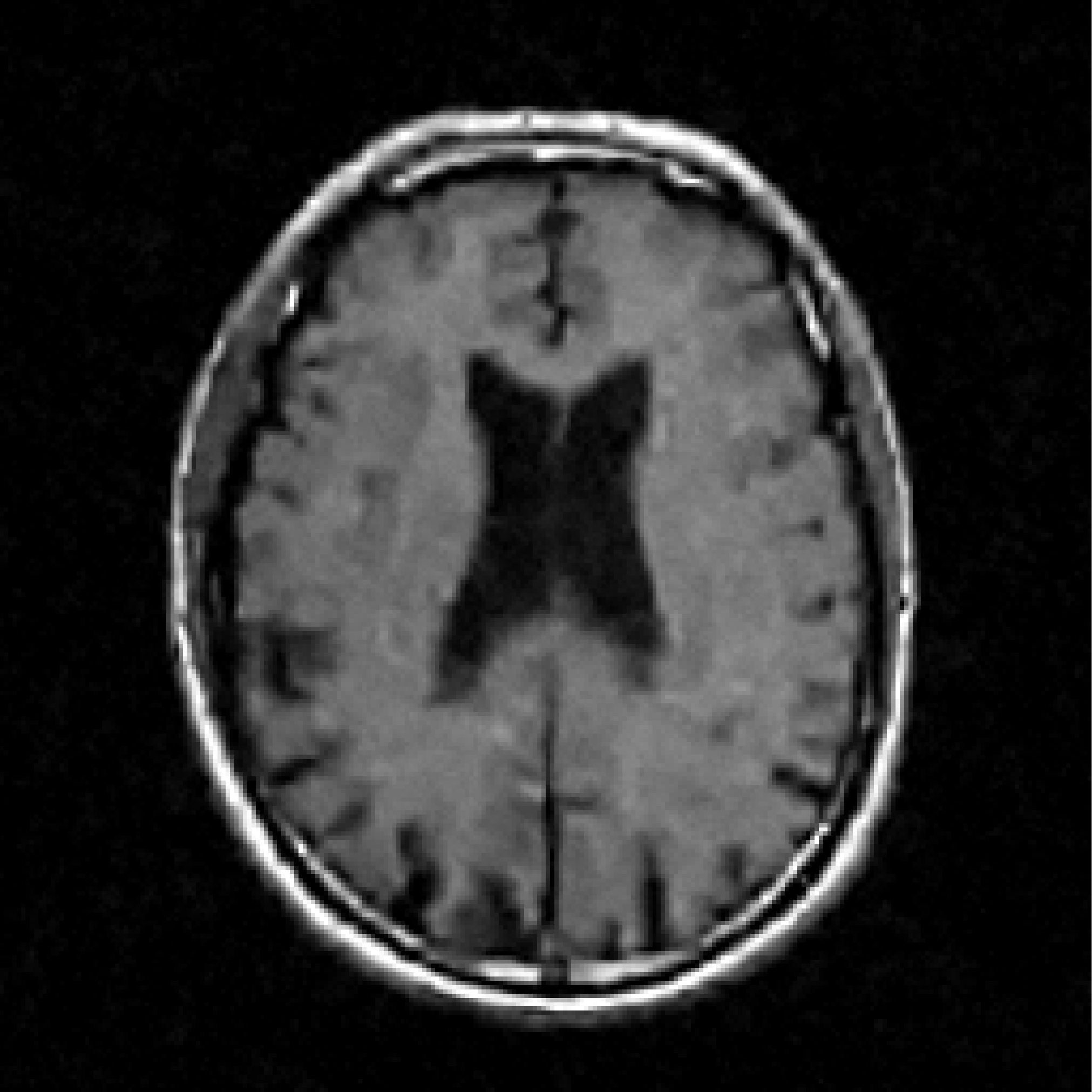}
		\end{minipage}&\hspace{-0.45cm}
		\begin{minipage}{3cm}
			\includegraphics[width=3cm]{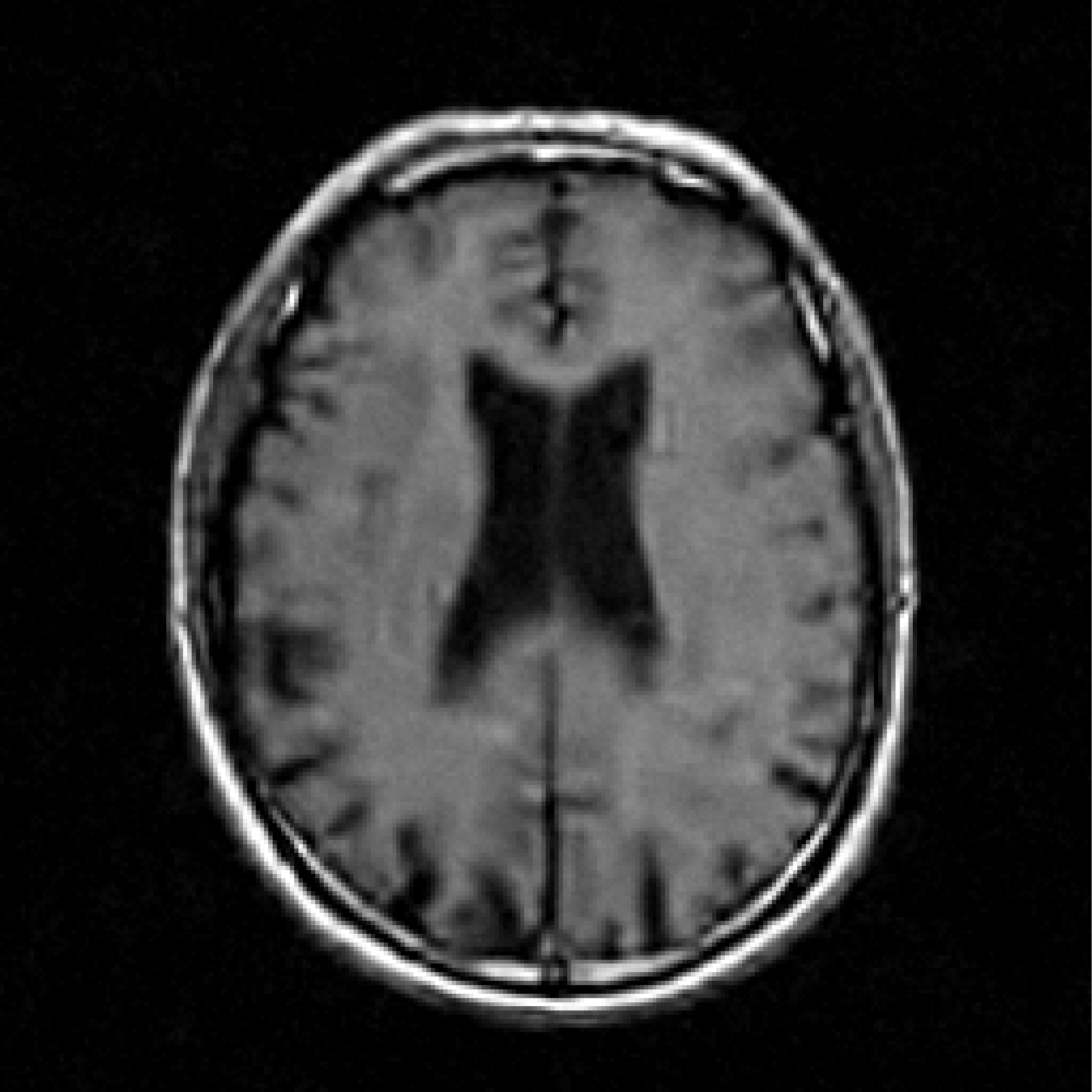}
		\end{minipage}\\
{\small{QPLS \cite{M.J.Ehrhardt2015}}}&\hspace{-0.45cm}
		{\small{JAnal \eqref{JAnalPETMRI}}}&\hspace{-0.45cm}
		{\small{JSTF \eqref{Proposed}}}&\hspace{-0.45cm}
		{\small{JSDDTF \eqref{DDTFPETMRI}}}
	\end{tabular}
	\caption{Visual comparison of PET-T1 Random joint reconstruction results. The first and second rows describe the PET images, and the third and fourth rows depict the MRI images.}\label{PETMRT115RandomResults}
\end{figure}

\begin{figure}[htp!]
	\centering
	\begin{tabular}{cccc}
		\begin{minipage}{3cm}
			\includegraphics[width=3cm]{PET15Original.pdf}
		\end{minipage}&\hspace{-0.45cm}
\begin{minipage}{3cm}
			\includegraphics[width=3cm]{PET15EM.pdf}
		\end{minipage}&\hspace{-0.45cm}
		\begin{minipage}{3cm}
			\includegraphics[width=3cm]{PET15Anal.pdf}
		\end{minipage}&\hspace{-0.45cm}
		\begin{minipage}{3cm}
			\includegraphics[width=3cm]{PET15DDTF.pdf}
		\end{minipage}\\
		{\small{Original}}&\hspace{-0.45cm}
{\small{Initial}}&\hspace{-0.45cm}
		{\small{Analysis \eqref{AnaPET}}}&\hspace{-0.45cm}
		{\small{DDTF \eqref{DDTFPET}}}\\
		\begin{minipage}{3cm}
			\includegraphics[width=3cm]{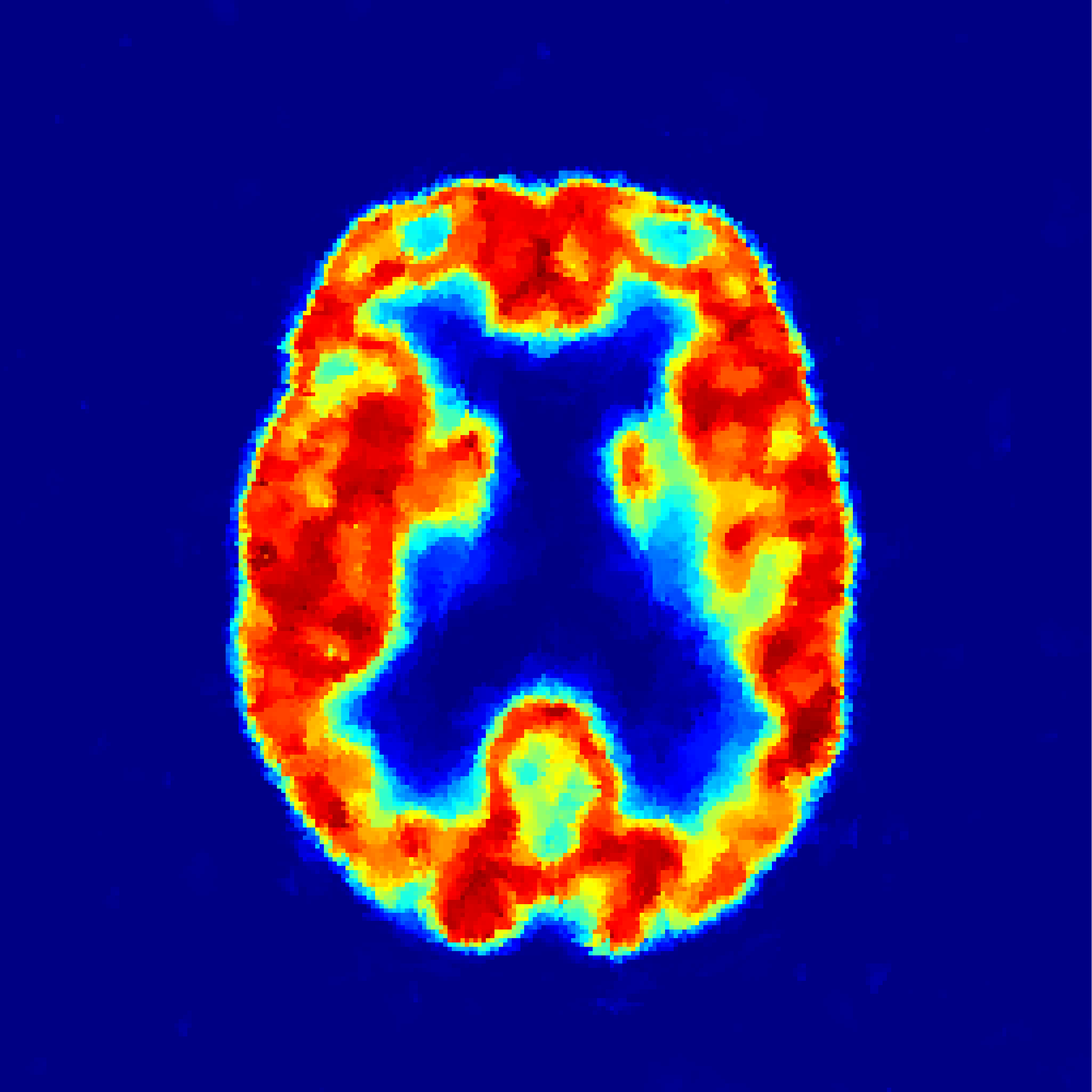}
		\end{minipage}&\hspace{-0.45cm}
		\begin{minipage}{3cm}
			\includegraphics[width=3cm]{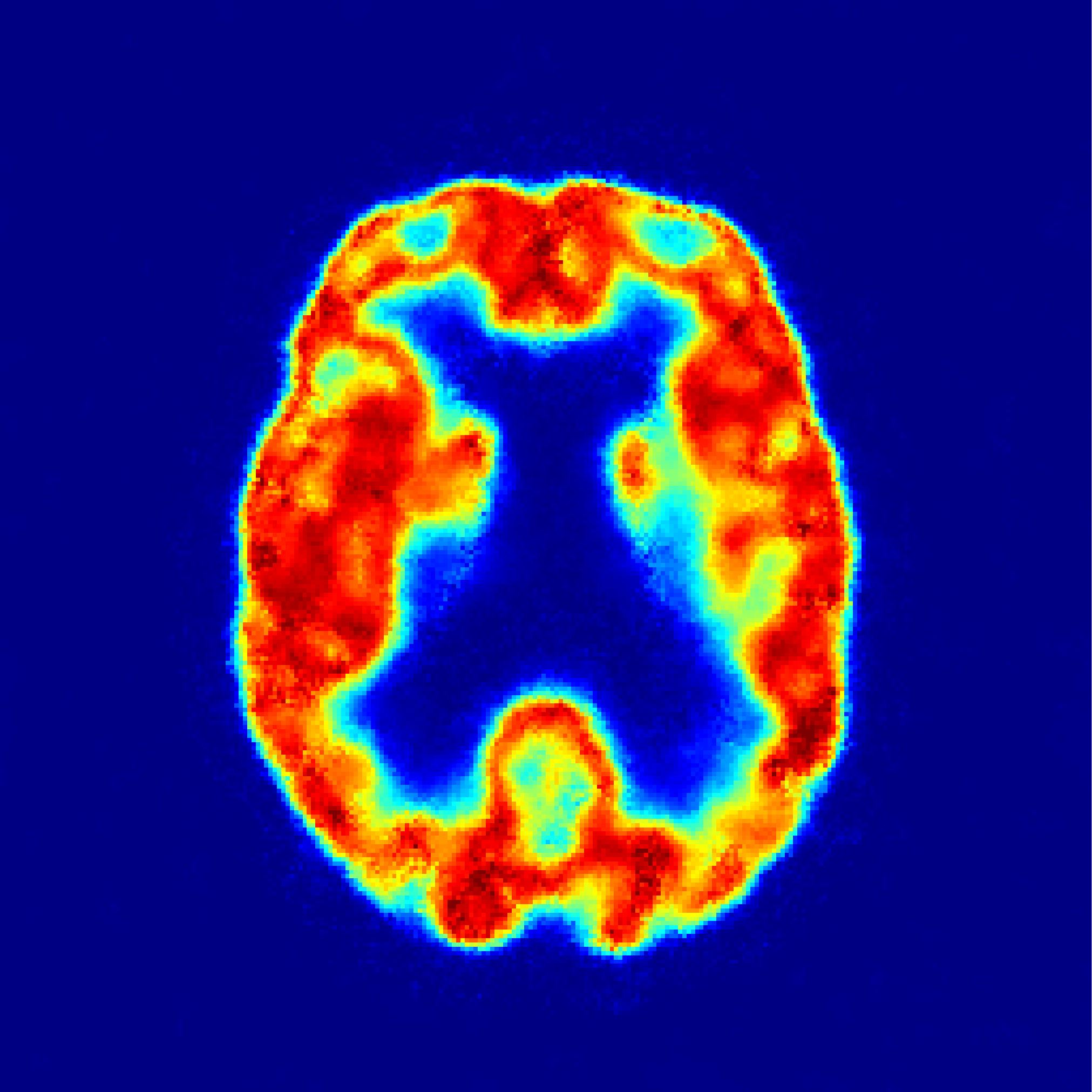}
		\end{minipage}&\hspace{-0.45cm}
		\begin{minipage}{3cm}
			\includegraphics[width=3cm]{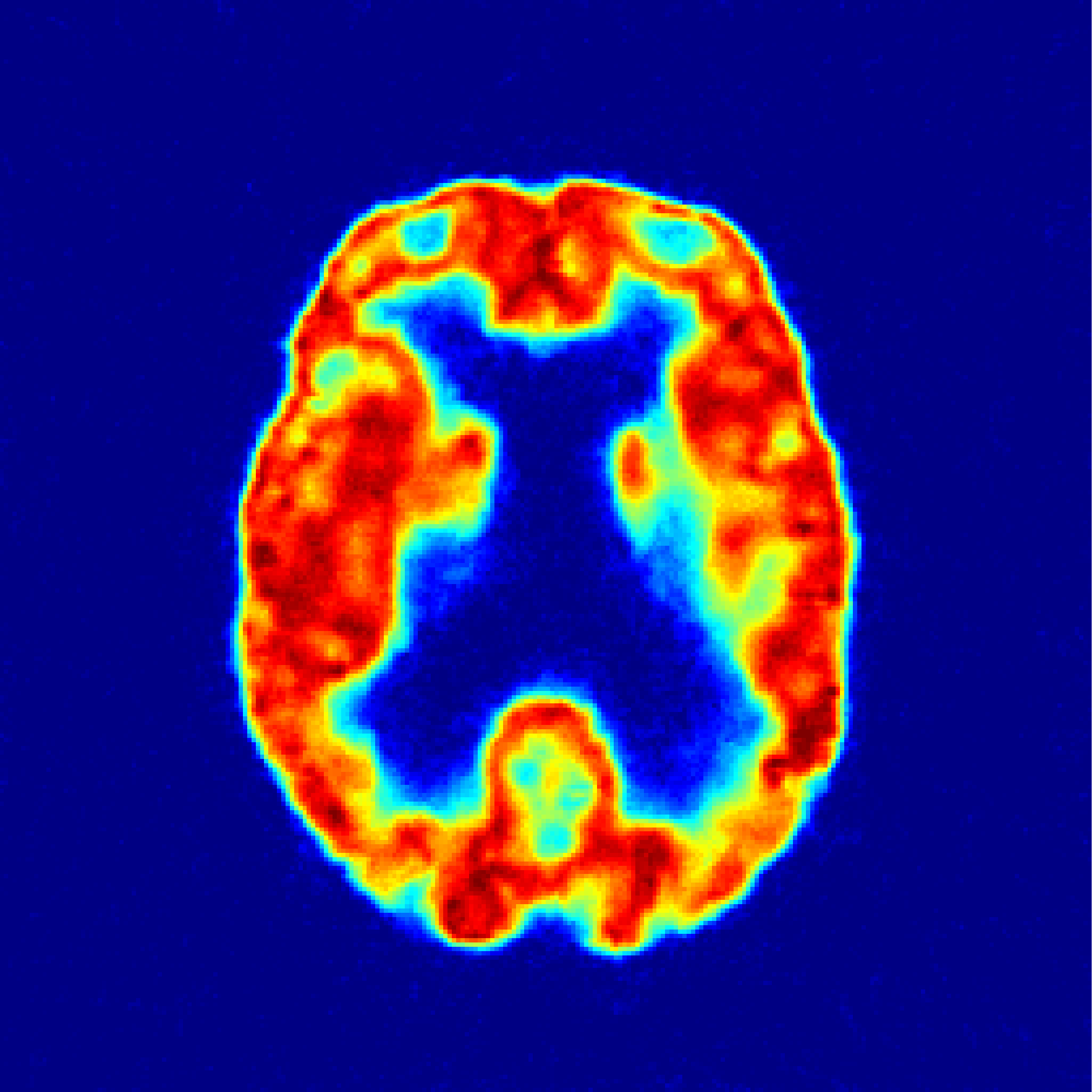}
		\end{minipage}&\hspace{-0.45cm}
		\begin{minipage}{3cm}
			\includegraphics[width=3cm]{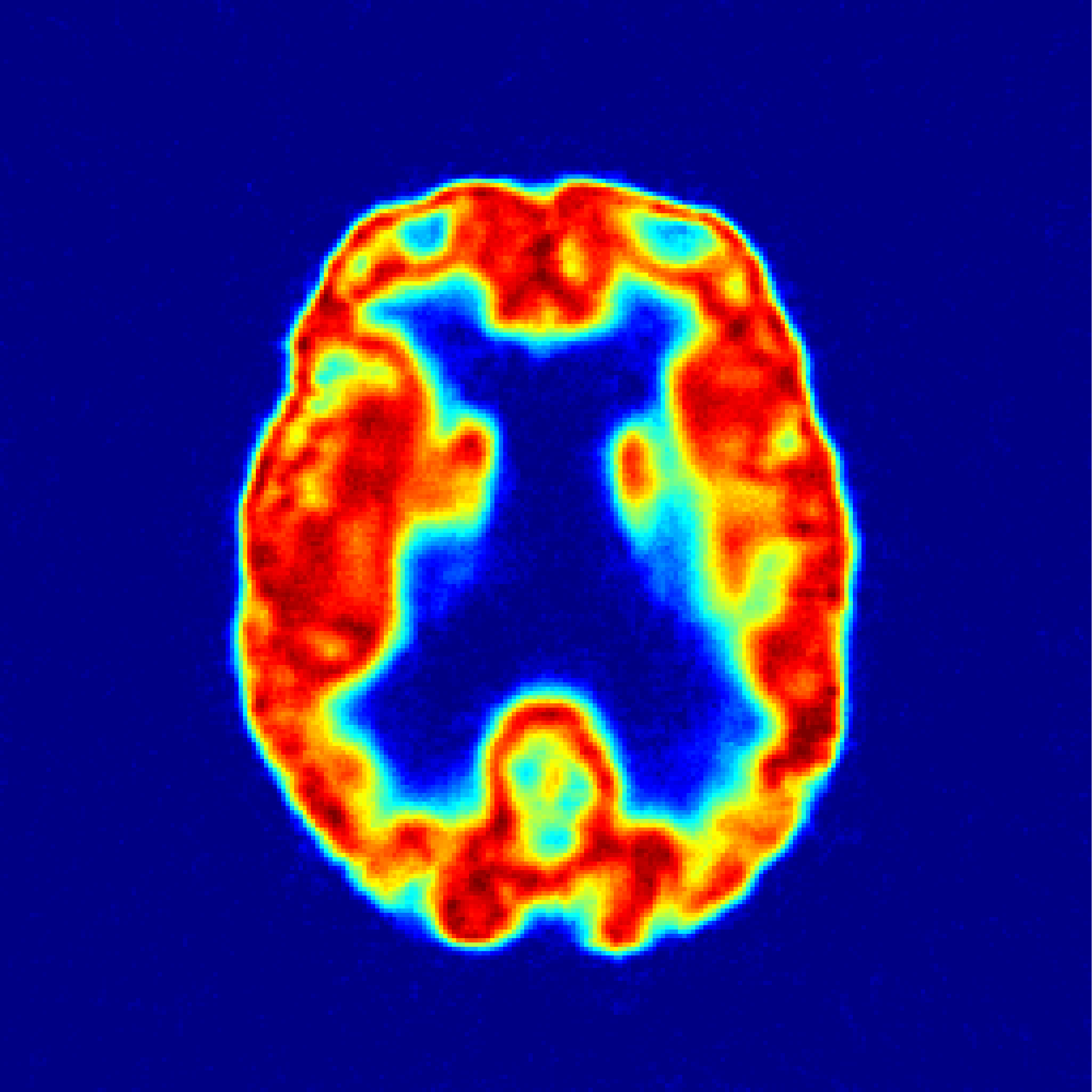}
		\end{minipage}\\
{\small{QPLS \cite{M.J.Ehrhardt2015}}}&\hspace{-0.45cm}
		{\small{JAnal \eqref{JAnalPETMRI}}}&\hspace{-0.45cm}
		{\small{JSTF \eqref{Proposed}}}&\hspace{-0.45cm}
		{\small{JSDDTF \eqref{DDTFPETMRI}}}\\
		\begin{minipage}{3cm}
			\includegraphics[width=3cm]{MRT215Original.pdf}
		\end{minipage}&\hspace{-0.45cm}
		\begin{minipage}{3cm}
			\includegraphics[width=3cm]{MRT215RandomZeroFill.pdf}
		\end{minipage}&\hspace{-0.45cm}
		\begin{minipage}{3cm}
			\includegraphics[width=3cm]{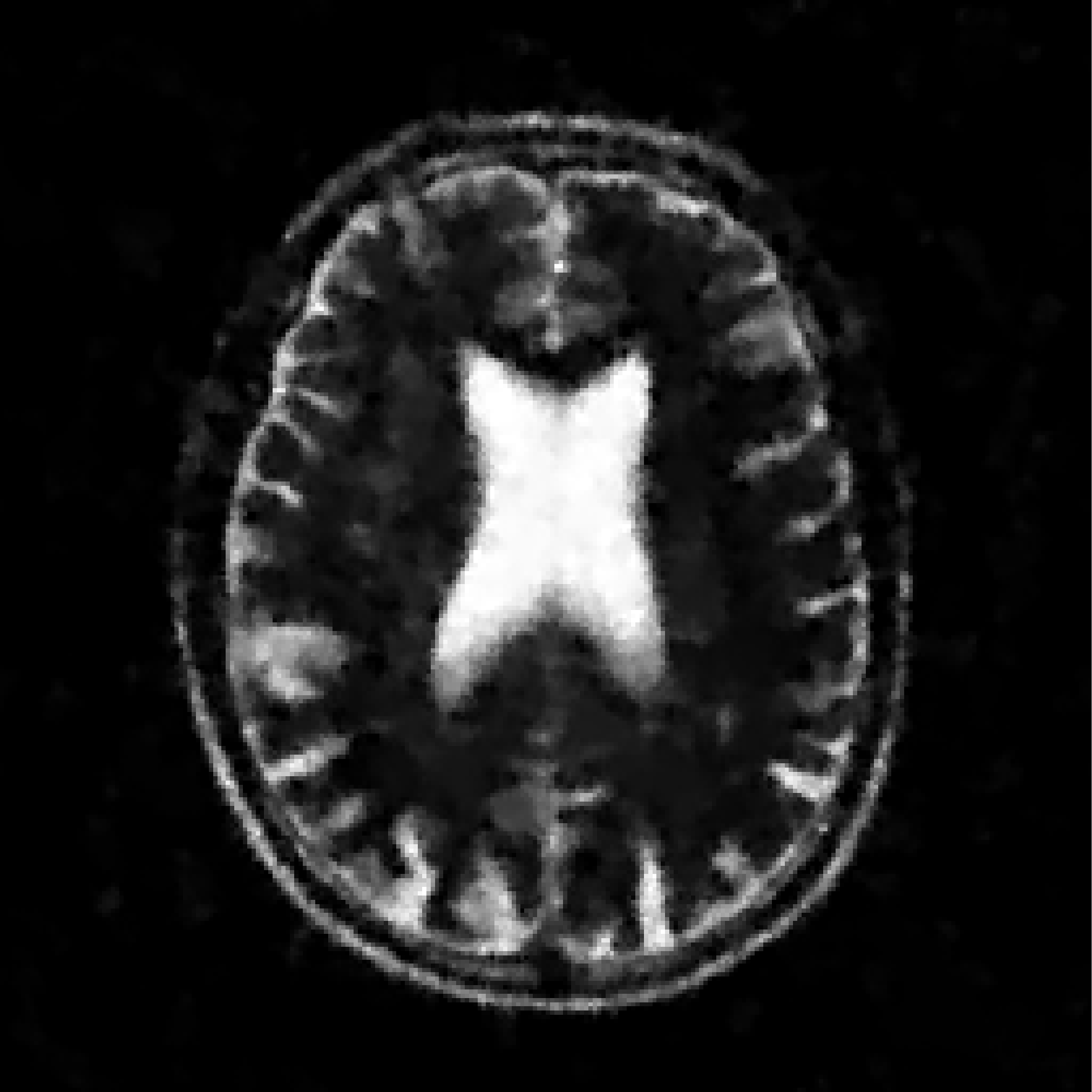}
		\end{minipage}&\hspace{-0.45cm}
\begin{minipage}{3cm}
\includegraphics[width=3cm]{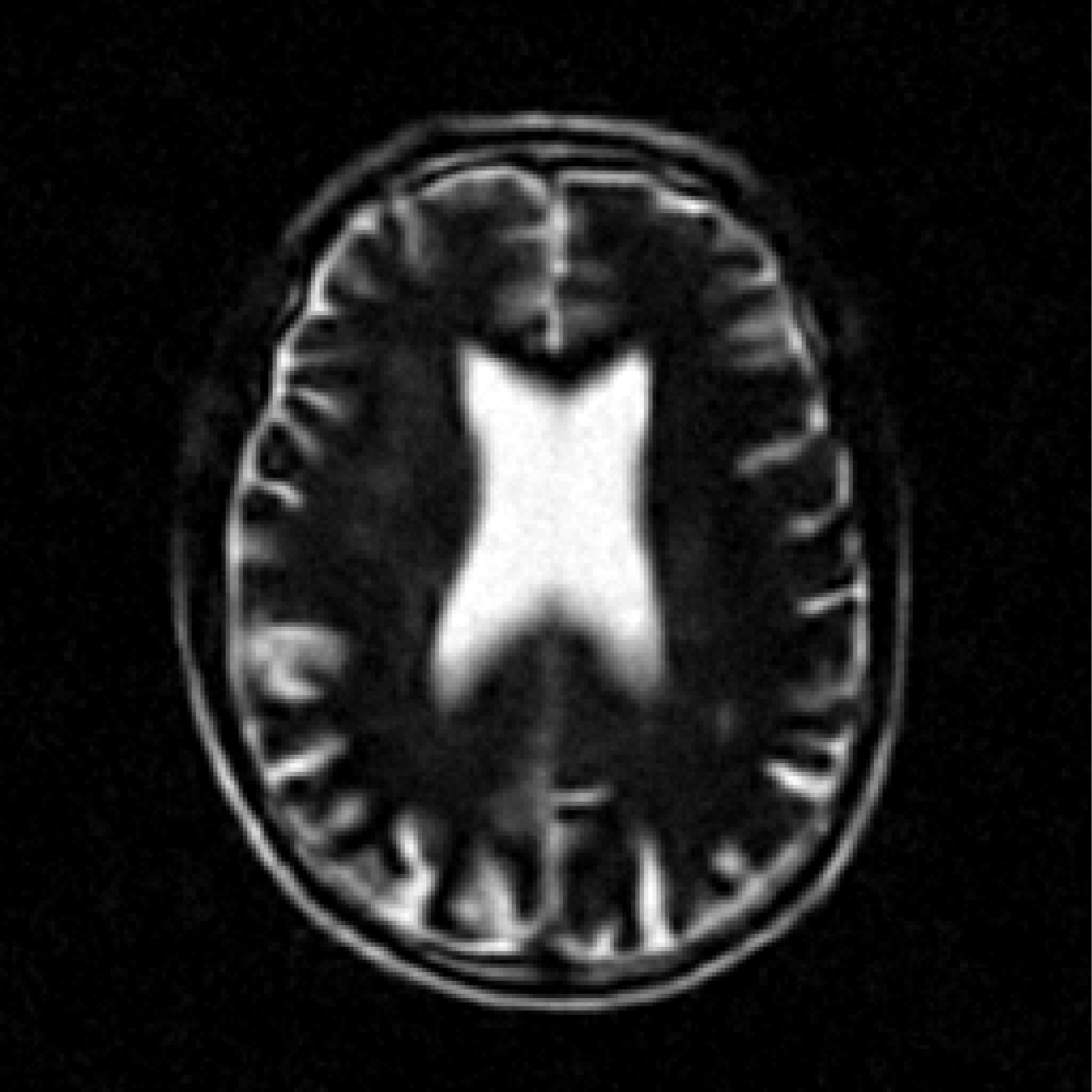}
\end{minipage}\\
		{\small{Original}}&\hspace{-0.45cm}
{\small{Initial}}&\hspace{-0.45cm}
{\small{Analysis \eqref{AnaMRI}}}&\hspace{-0.45cm}
		{\small{DDTF \eqref{DDTFMRI}}}\\
\begin{minipage}{3cm}
			\includegraphics[width=3cm]{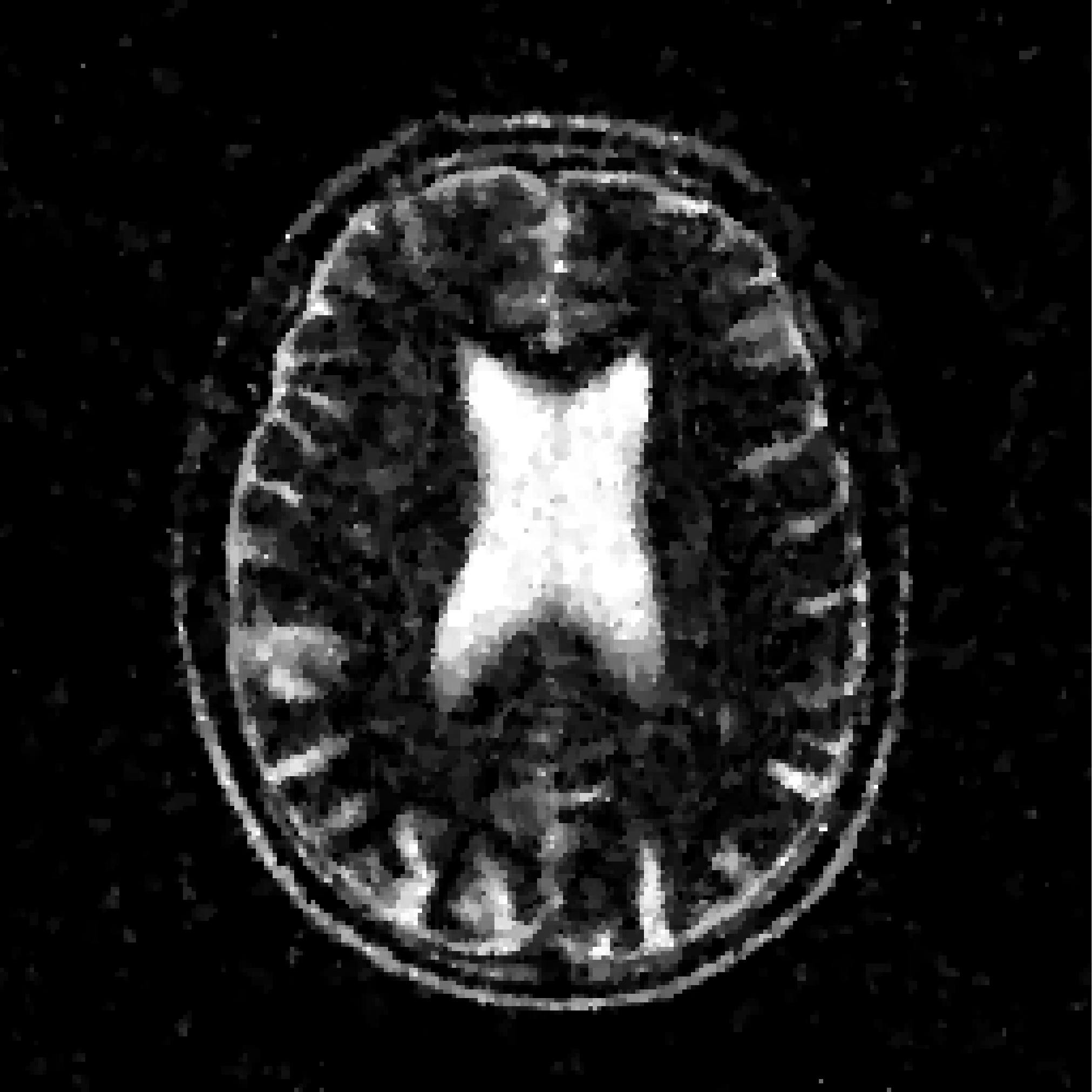}
		\end{minipage}&\hspace{-0.45cm}
		\begin{minipage}{3cm}
			\includegraphics[width=3cm]{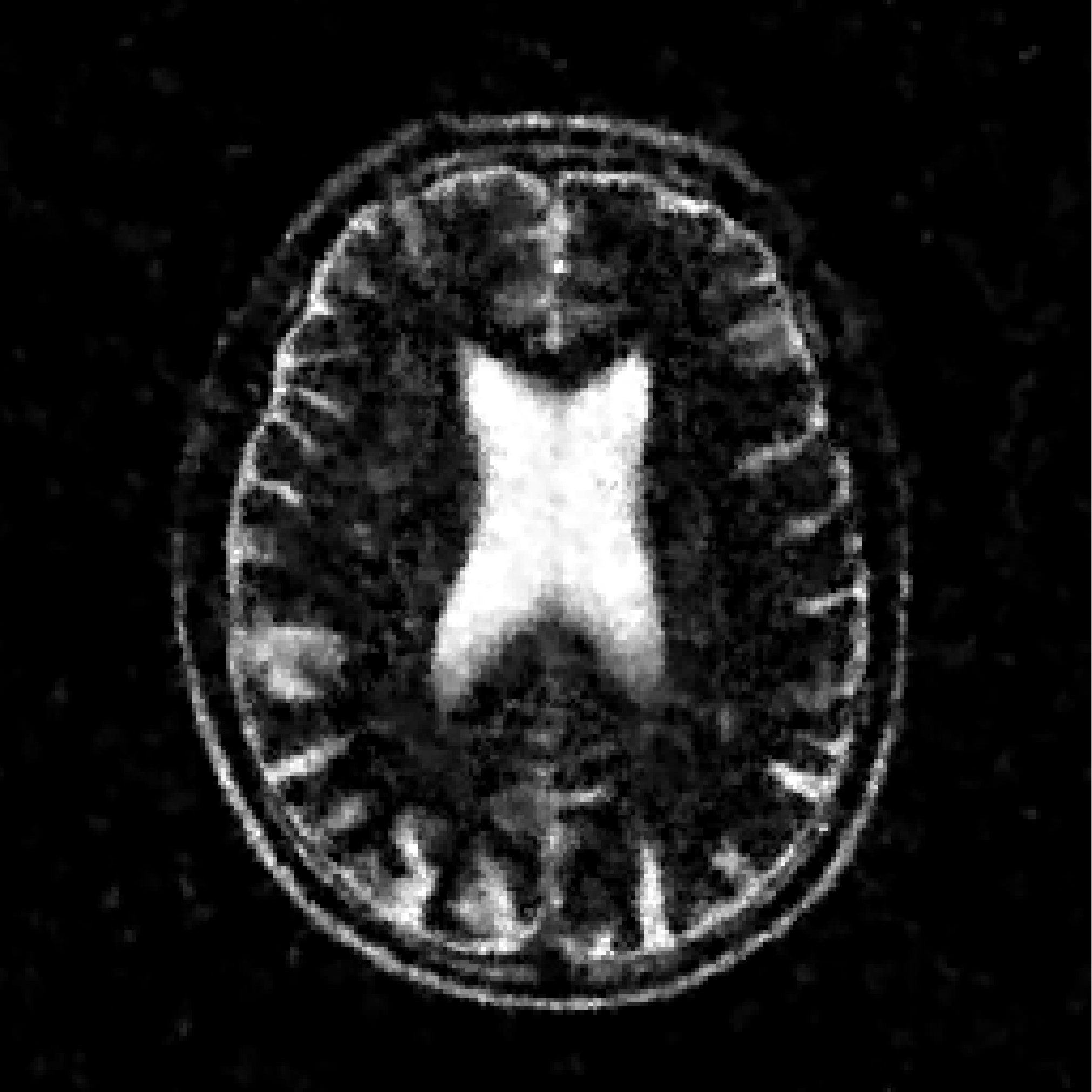}
		\end{minipage}&\hspace{-0.45cm}
		\begin{minipage}{3cm}
			\includegraphics[width=3cm]{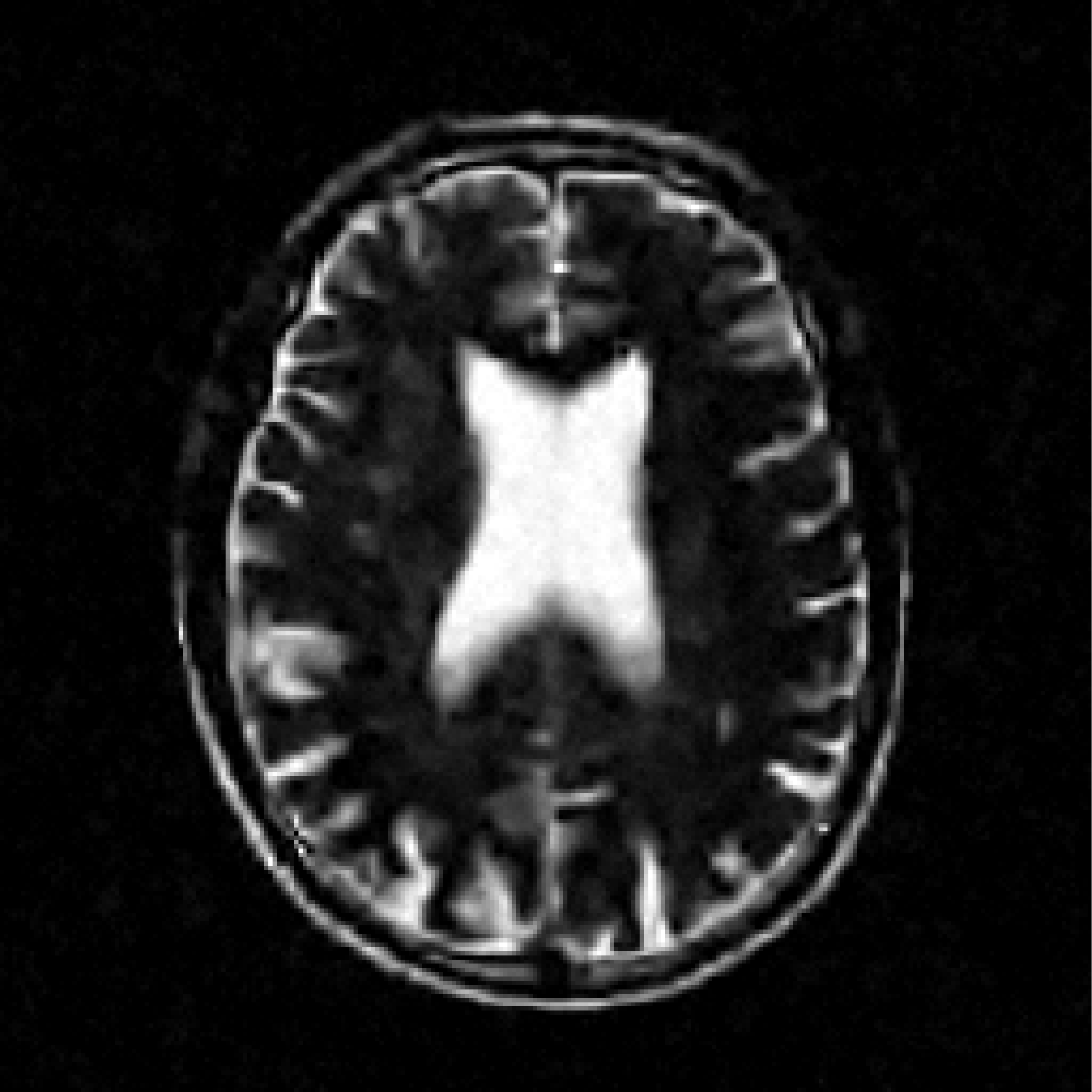}
		\end{minipage}&\hspace{-0.45cm}
		\begin{minipage}{3cm}
			\includegraphics[width=3cm]{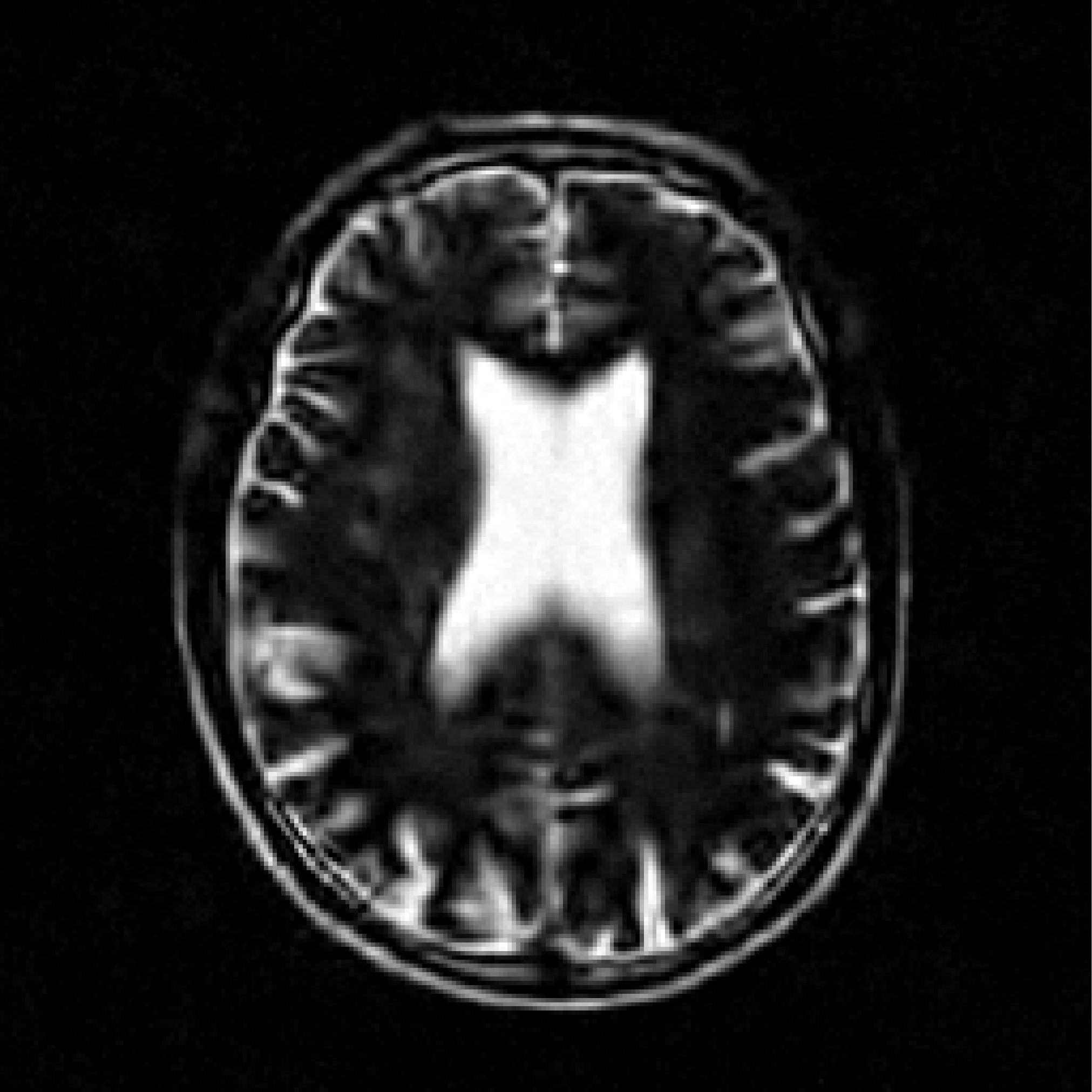}
		\end{minipage}\\
{\small{QPLS \cite{M.J.Ehrhardt2015}}}&\hspace{-0.45cm}
		{\small{JAnal \eqref{JAnalPETMRI}}}&\hspace{-0.45cm}
		{\small{JSTF \eqref{Proposed}}}&\hspace{-0.45cm}
		{\small{JSDDTF \eqref{DDTFPETMRI}}}
	\end{tabular}
	\caption{Visual comparison of PET-T2 Random joint reconstruction results. The first and second rows describe the PET images, and the third and fourth rows depict the MRI images.}\label{PETMRT215RandomResults}
\end{figure}

\begin{figure}[htp!]
	\centering
	\begin{tabular}{cccc}
		\begin{minipage}{3cm}
			\includegraphics[width=3cm]{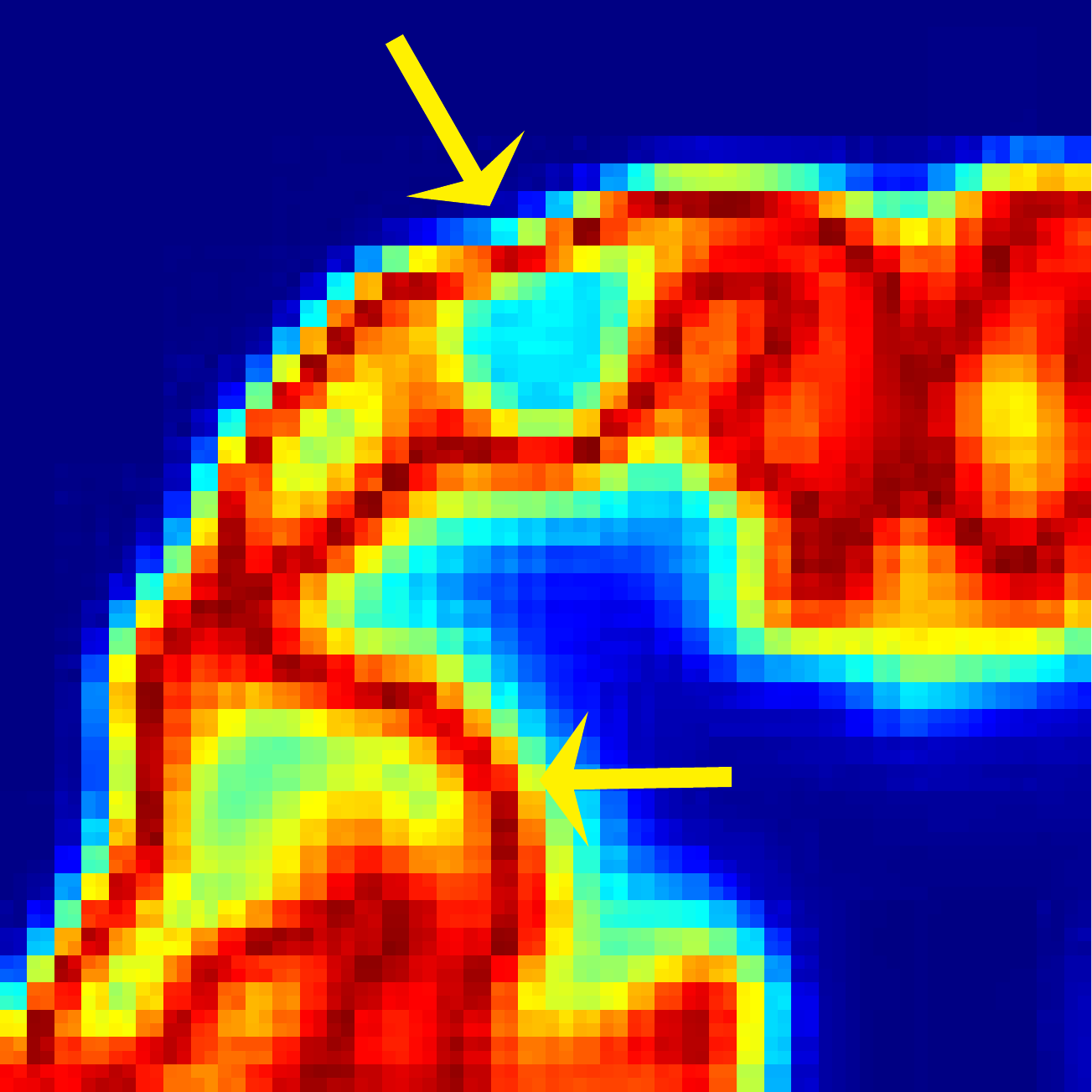}
		\end{minipage}&\hspace{-0.45cm}
\begin{minipage}{3cm}
			\includegraphics[width=3cm]{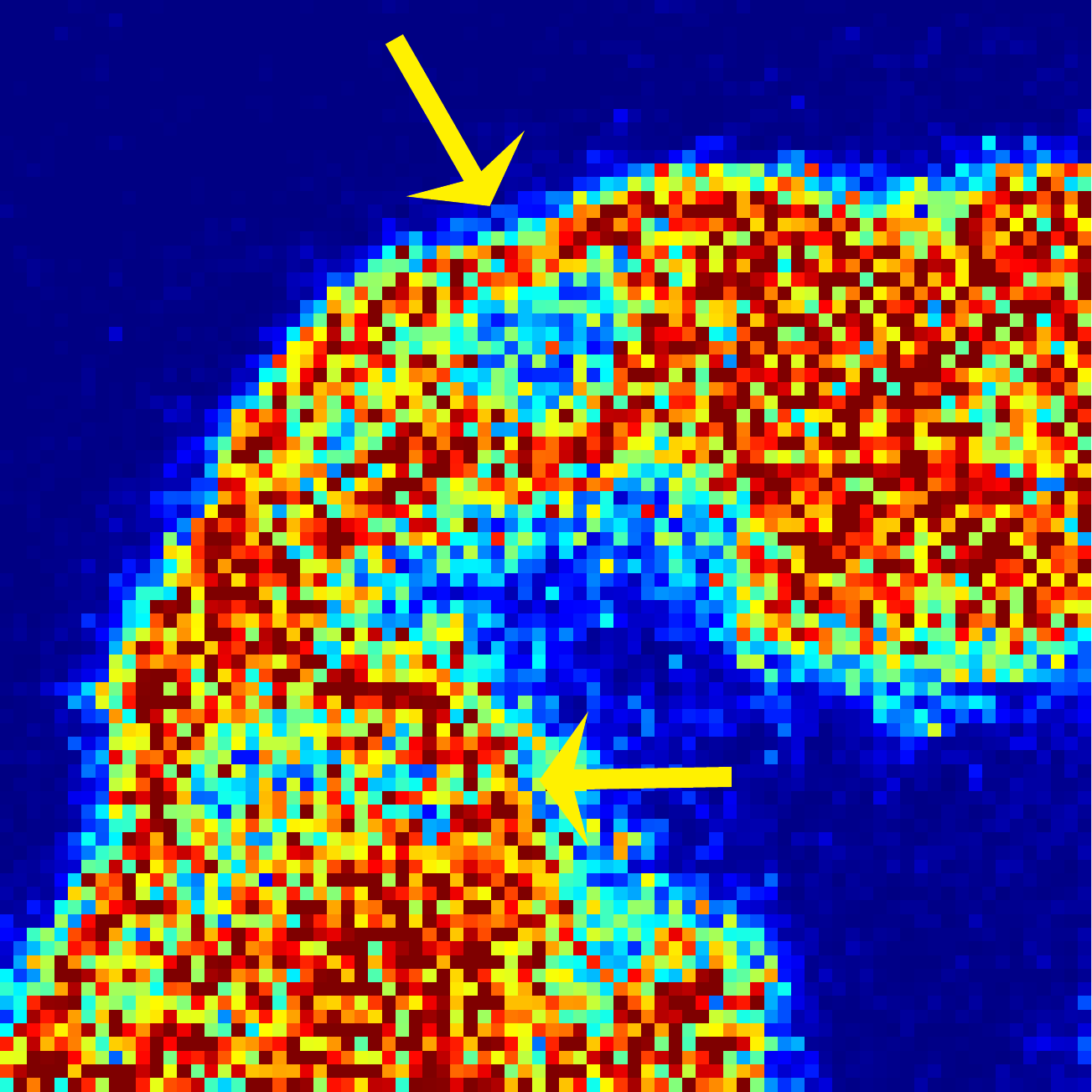}
		\end{minipage}&\hspace{-0.45cm}
		\begin{minipage}{3cm}
			\includegraphics[width=3cm]{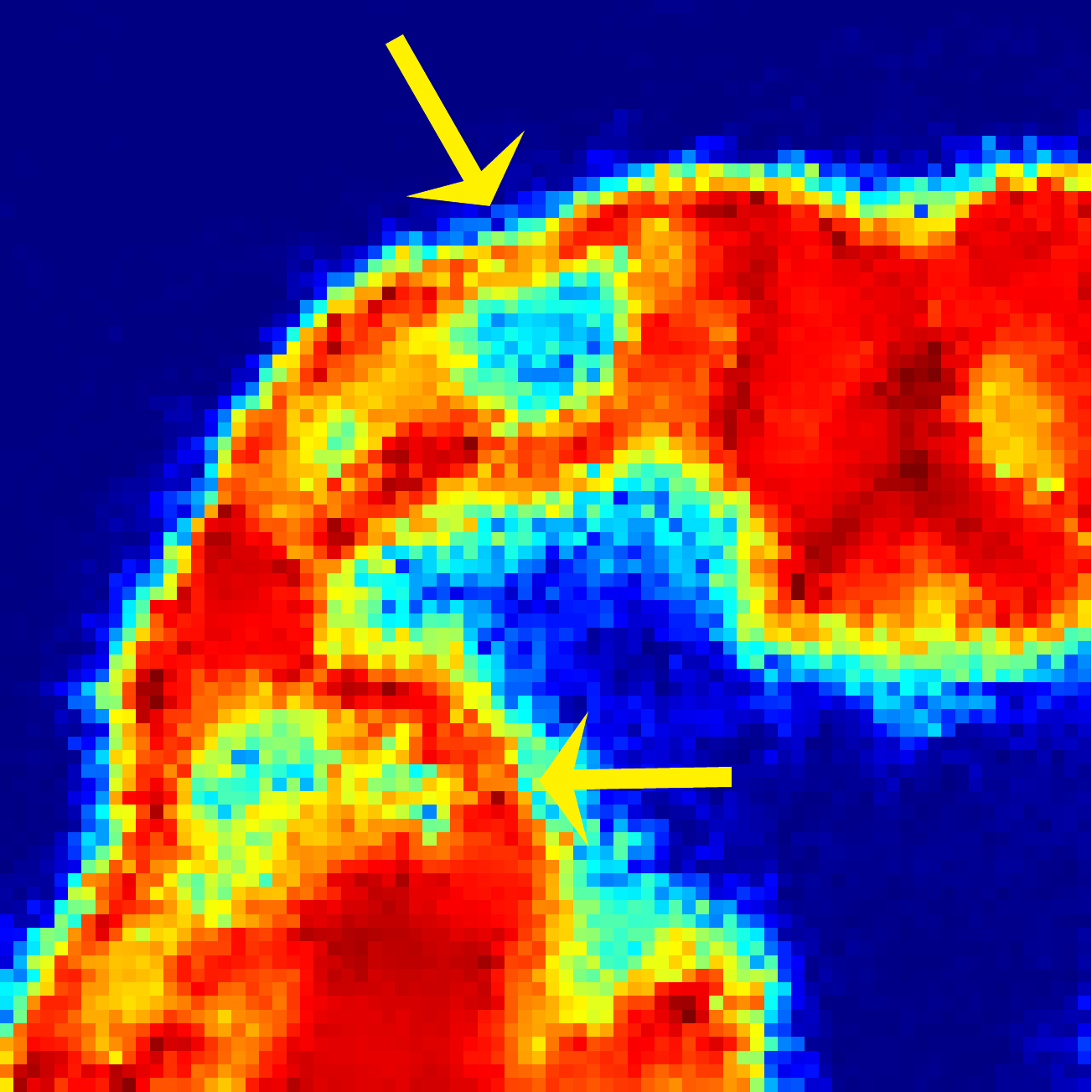}
		\end{minipage}&\hspace{-0.45cm}
		\begin{minipage}{3cm}
			\includegraphics[width=3cm]{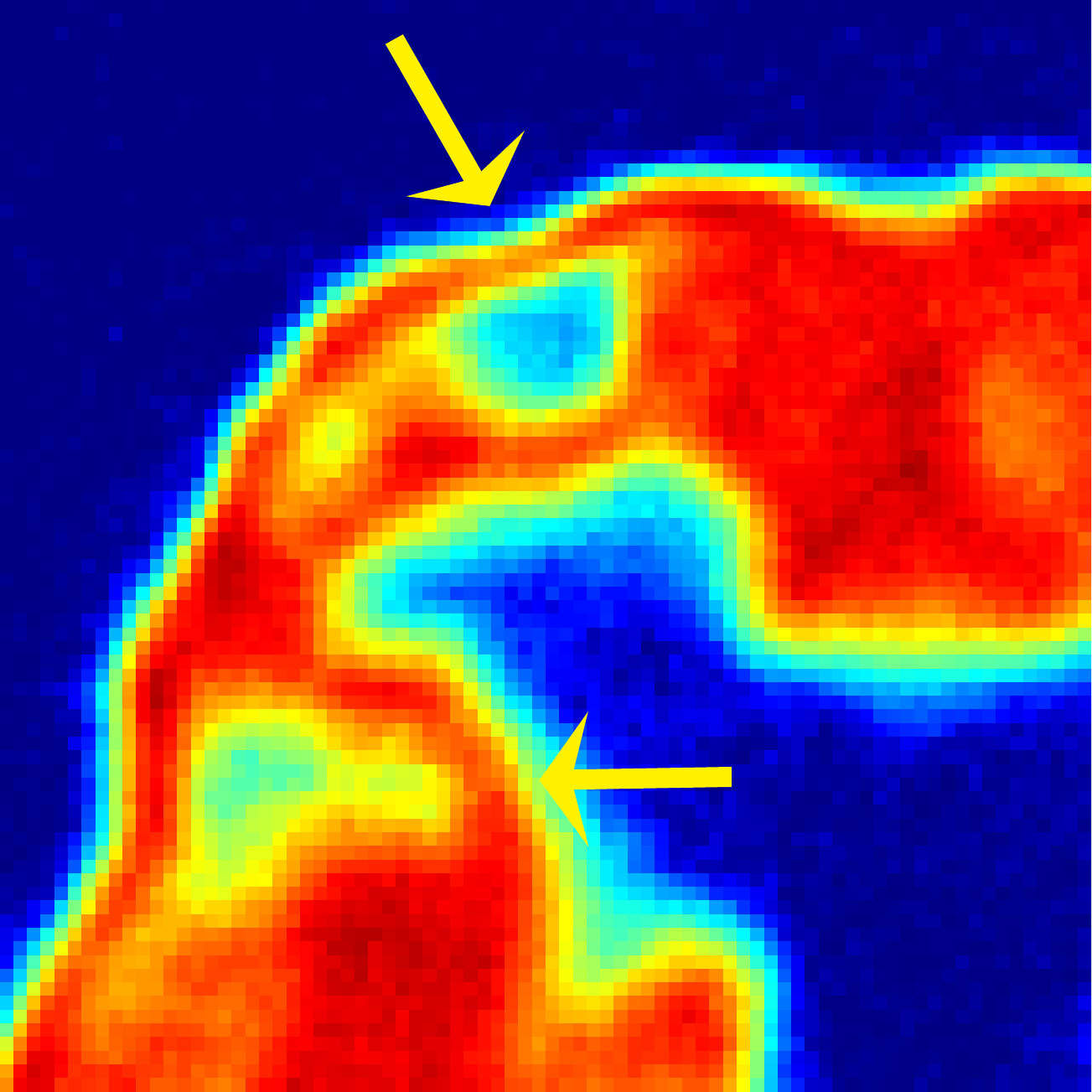}
		\end{minipage}\\
		{\small{Original}}&\hspace{-0.45cm}
{\small{Initial}}&\hspace{-0.45cm}
		{\small{Analysis \eqref{AnaPET}}}&\hspace{-0.45cm}
		{\small{DDTF \eqref{DDTFPET}}}\\
		\begin{minipage}{3cm}
			\includegraphics[width=3cm]{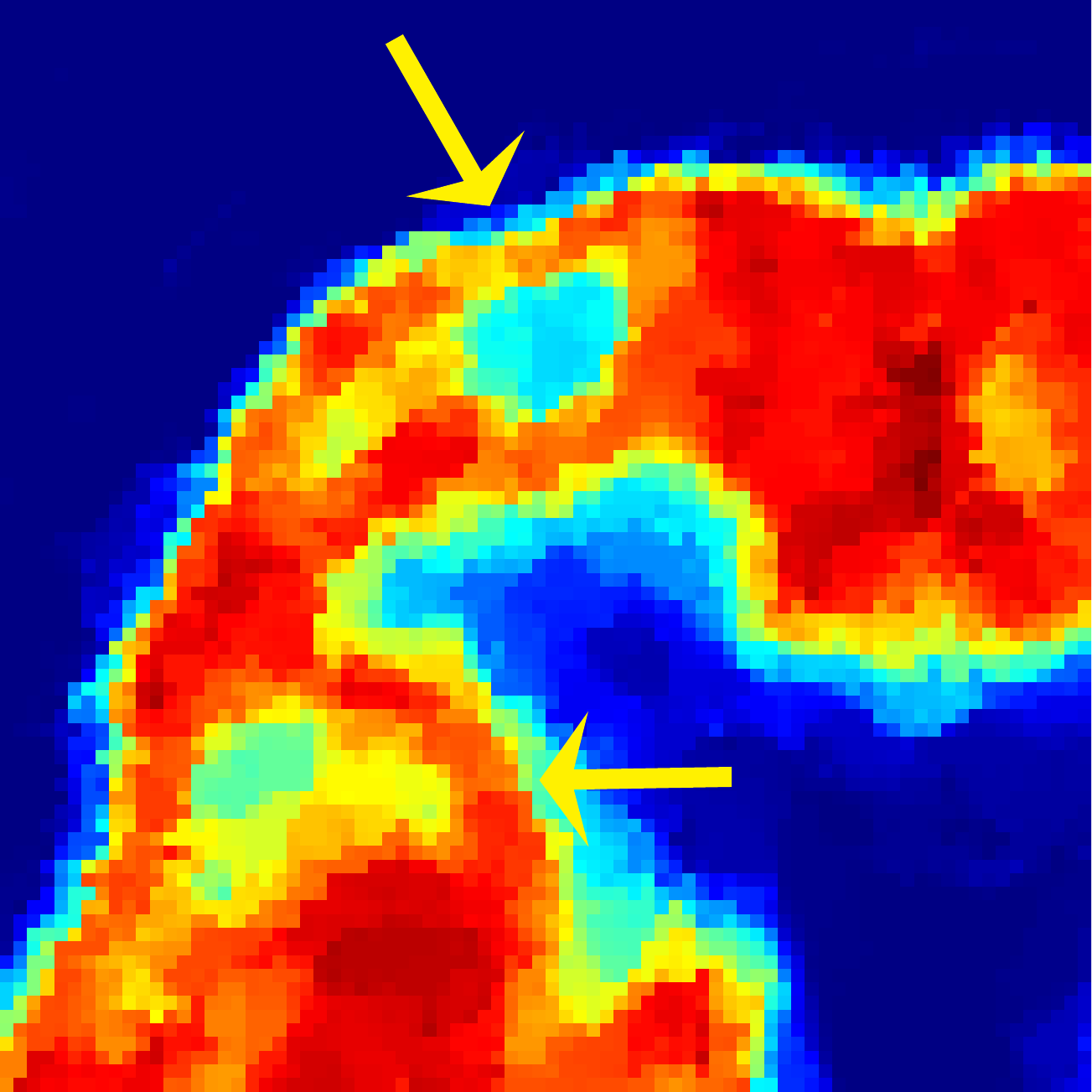}
		\end{minipage}&\hspace{-0.45cm}
		\begin{minipage}{3cm}
			\includegraphics[width=3cm]{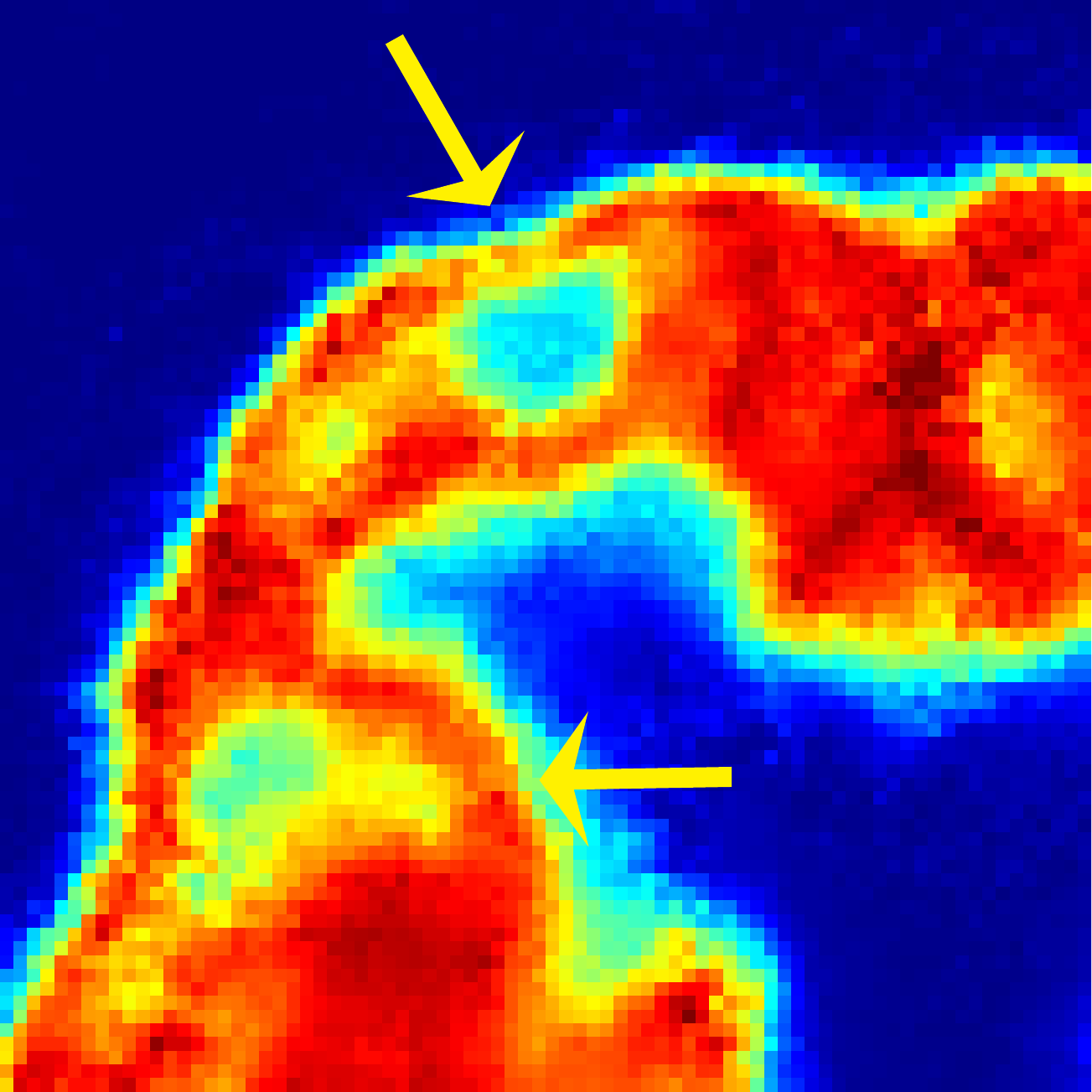}
		\end{minipage}&\hspace{-0.45cm}
		\begin{minipage}{3cm}
			\includegraphics[width=3cm]{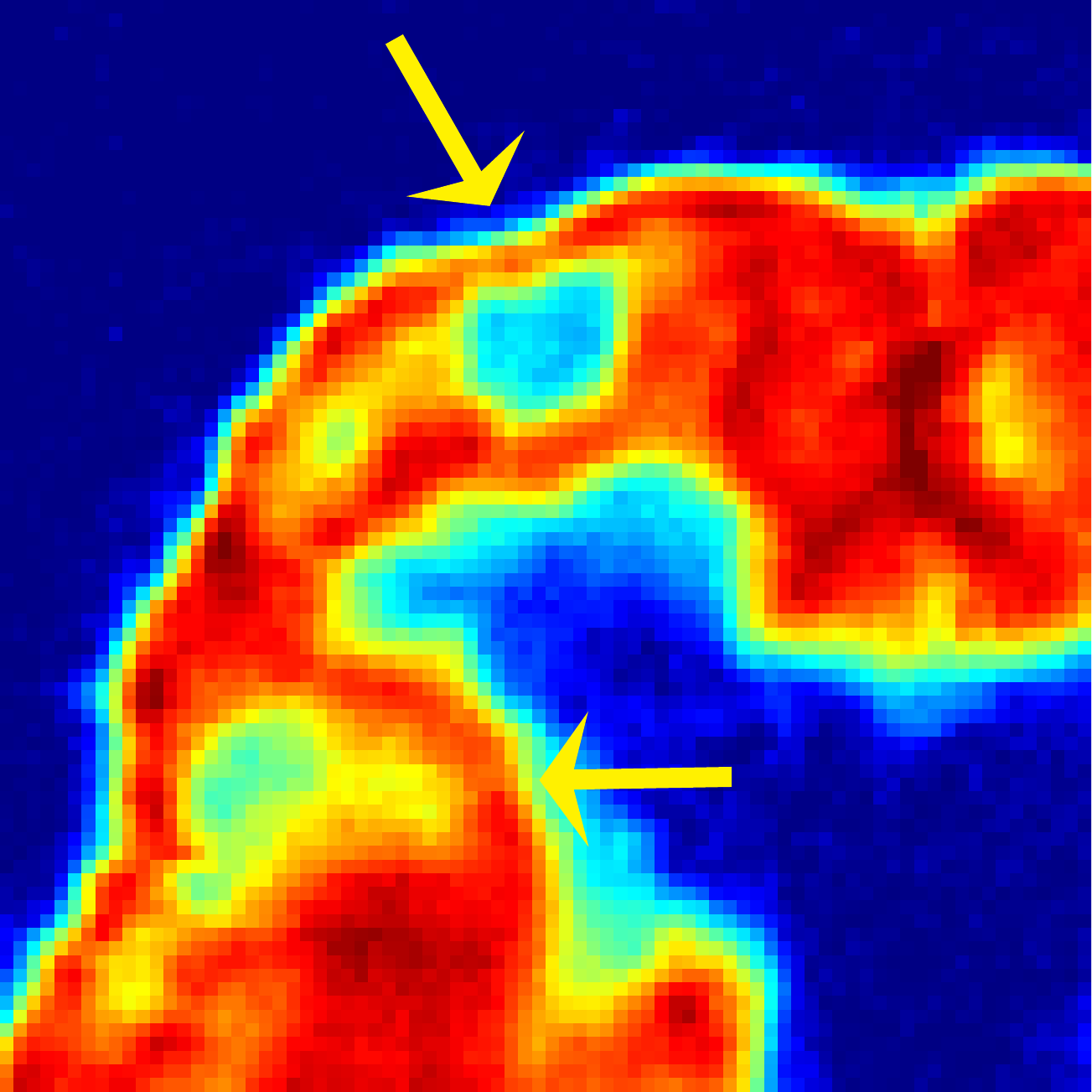}
		\end{minipage}&\hspace{-0.45cm}
		\begin{minipage}{3cm}
			\includegraphics[width=3cm]{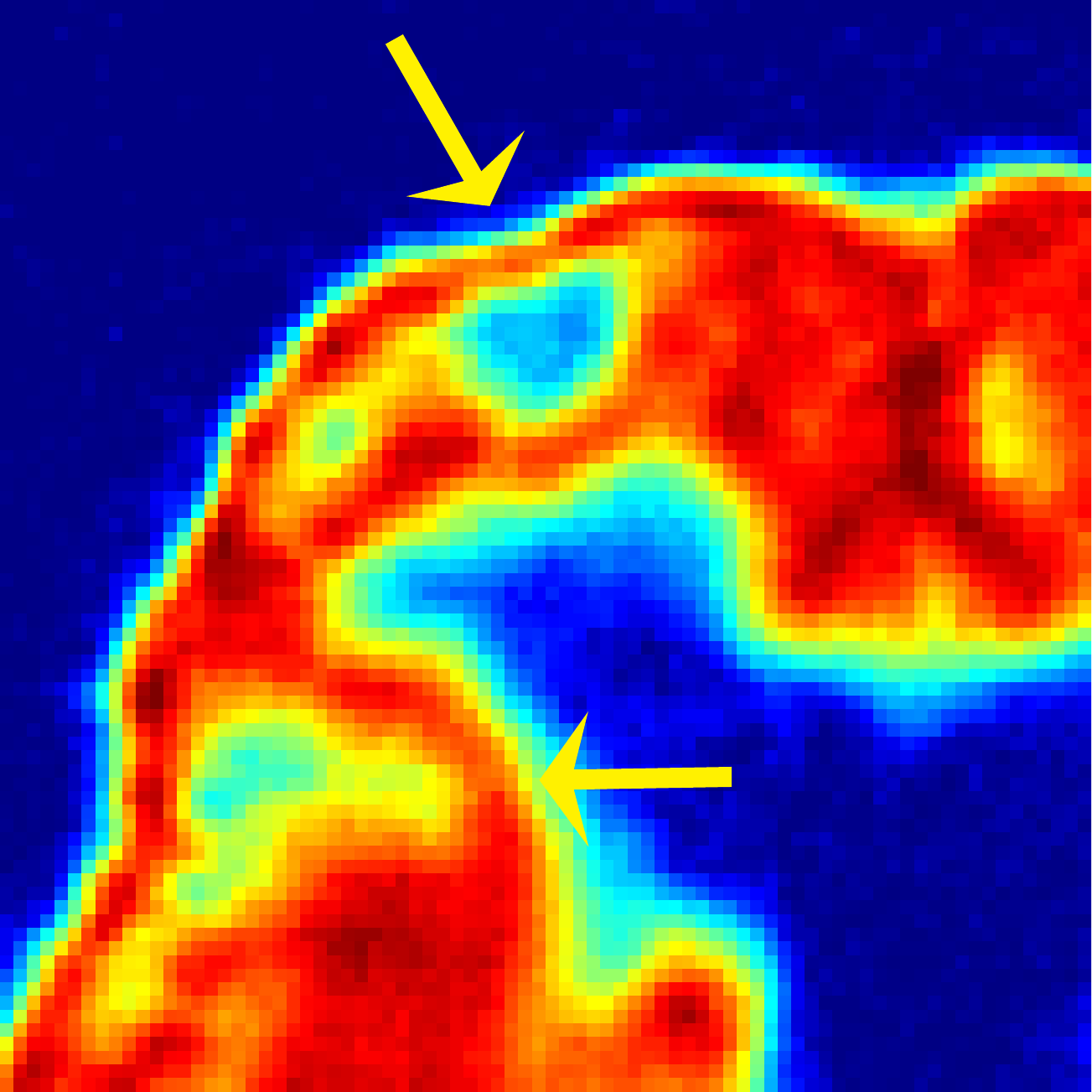}
		\end{minipage}\\
{\small{QPLS \cite{M.J.Ehrhardt2015}}}&\hspace{-0.45cm}
		{\small{JAnal \eqref{JAnalPETMRI}}}&\hspace{-0.45cm}
		{\small{JSTF \eqref{Proposed}}}&\hspace{-0.45cm}
		{\small{JSDDTF \eqref{DDTFPETMRI}}}\\
		\begin{minipage}{3cm}
			\includegraphics[width=3cm]{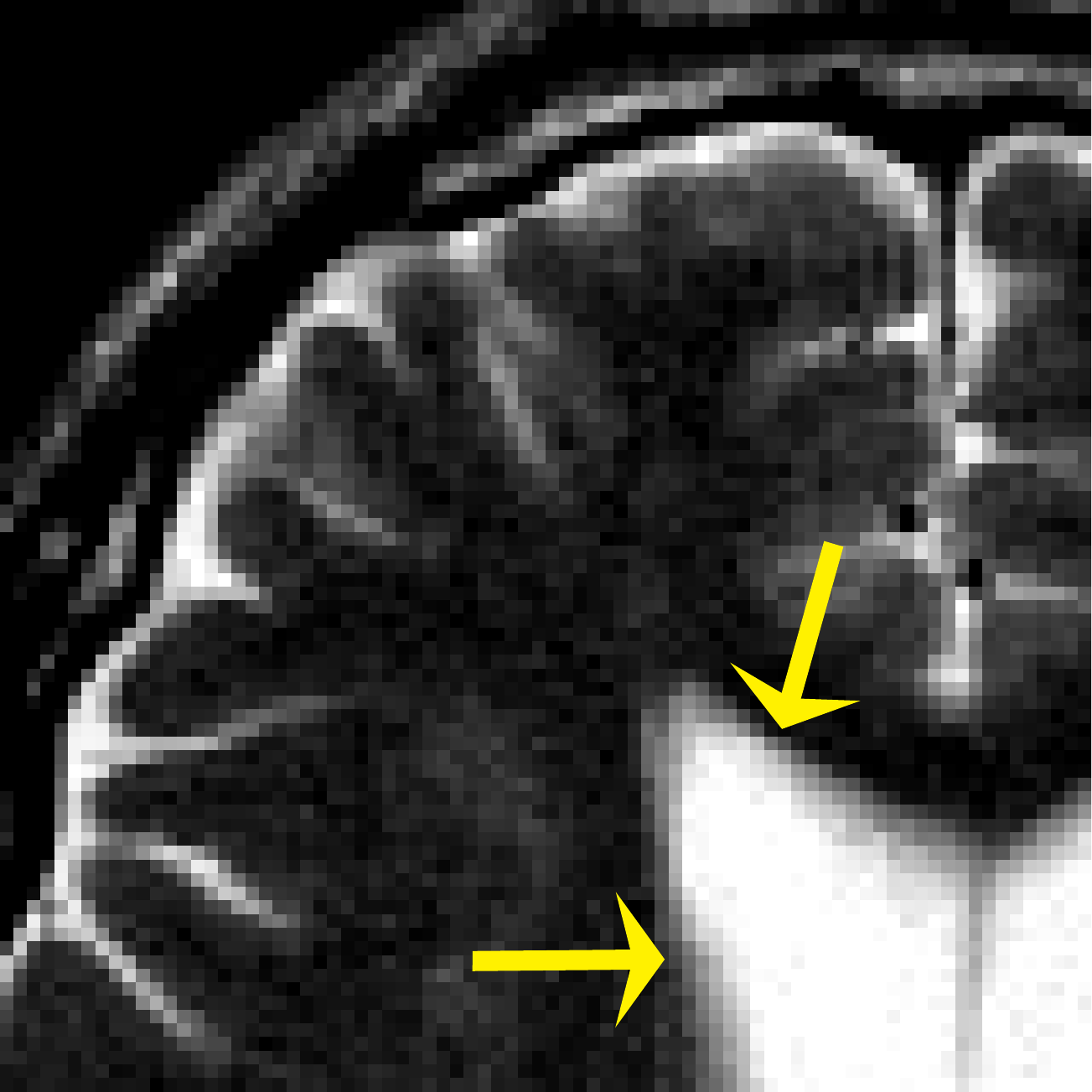}
		\end{minipage}&\hspace{-0.45cm}
		\begin{minipage}{3cm}
			\includegraphics[width=3cm]{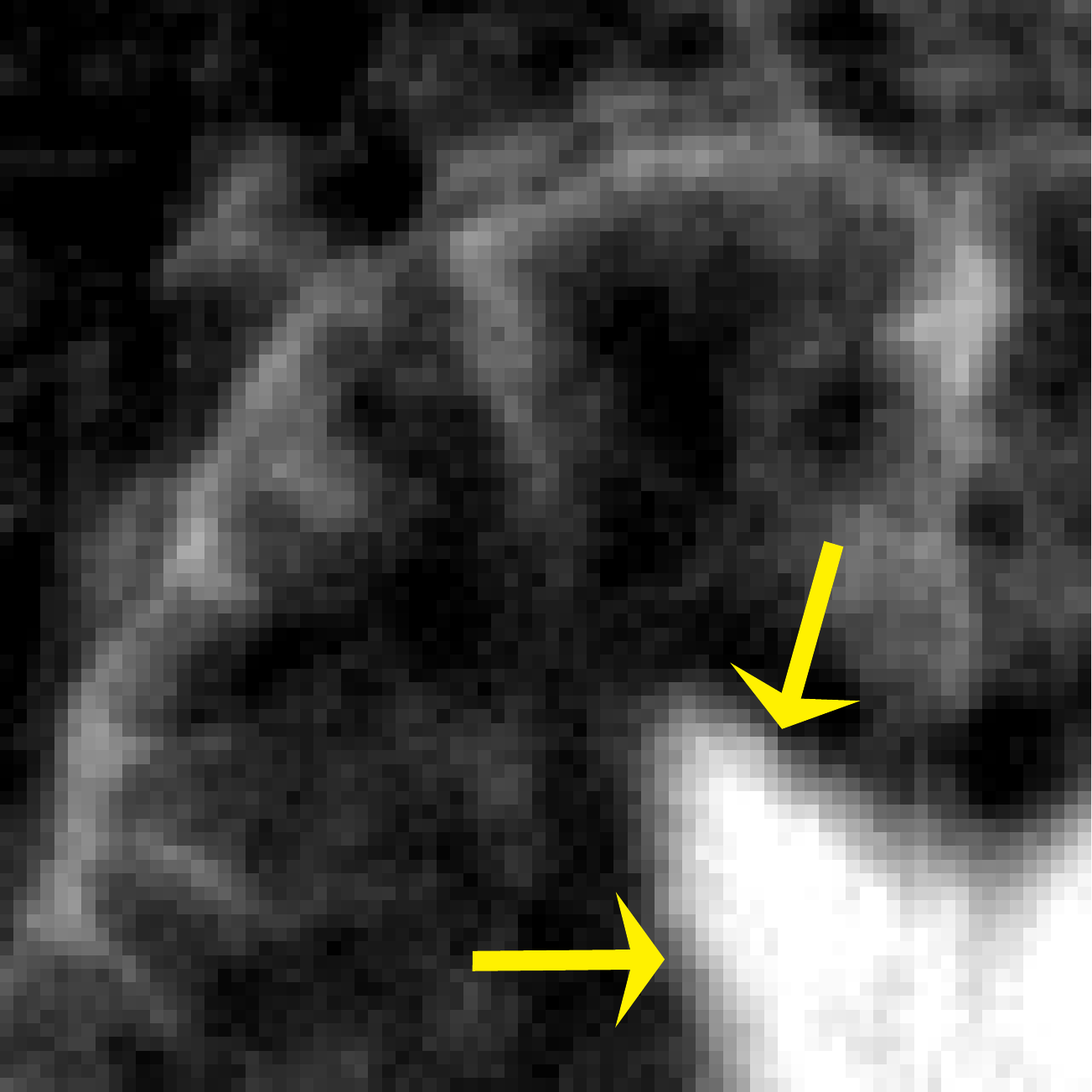}
		\end{minipage}&\hspace{-0.45cm}
		\begin{minipage}{3cm}
			\includegraphics[width=3cm]{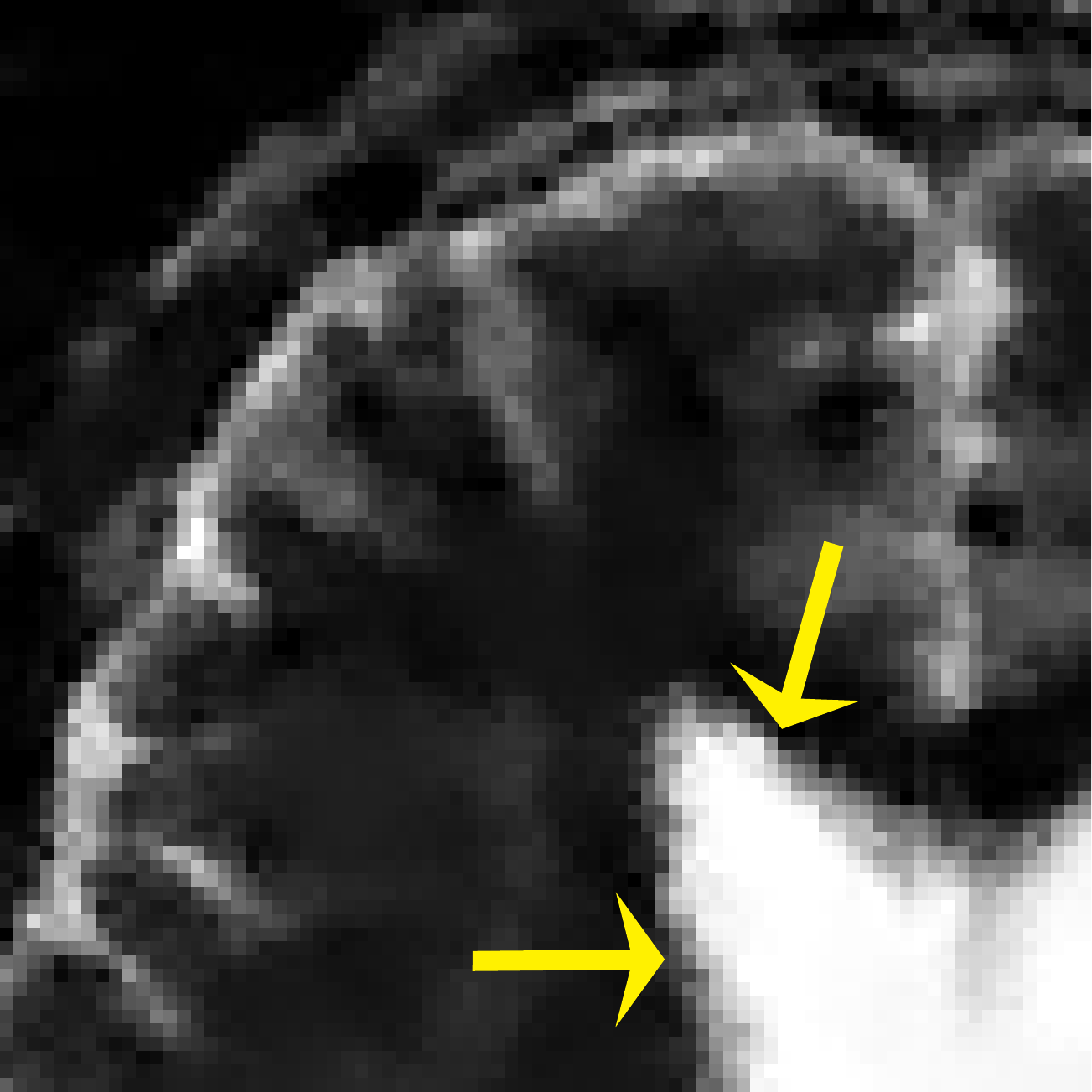}
		\end{minipage}&\hspace{-0.45cm}
\begin{minipage}{3cm}
\includegraphics[width=3cm]{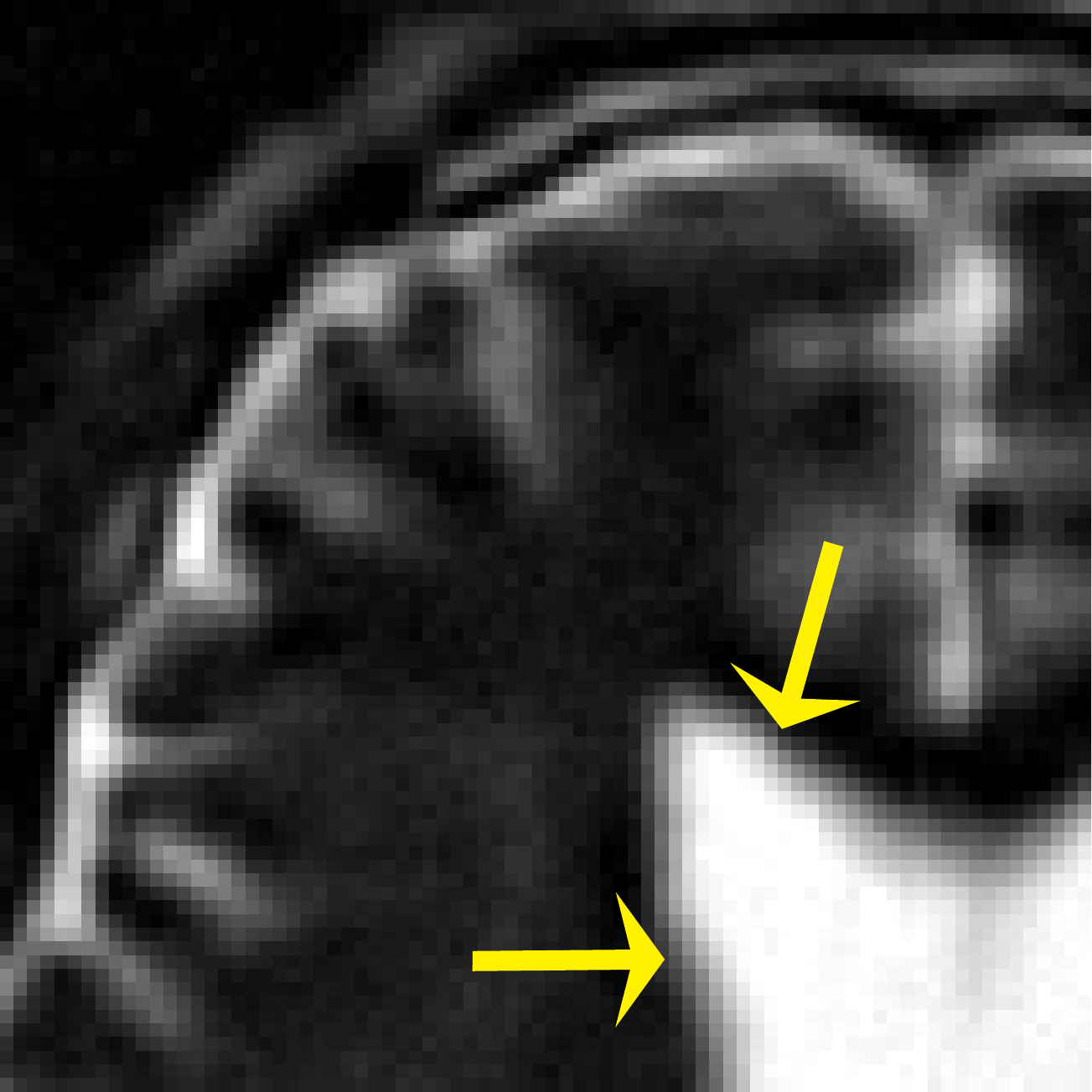}
\end{minipage}\\
		{\small{Original}}&\hspace{-0.45cm}
{\small{Initial}}&\hspace{-0.45cm}
{\small{Analysis \eqref{AnaMRI}}}&\hspace{-0.45cm}
		{\small{DDTF \eqref{DDTFMRI}}}\\
\begin{minipage}{3cm}
			\includegraphics[width=3cm]{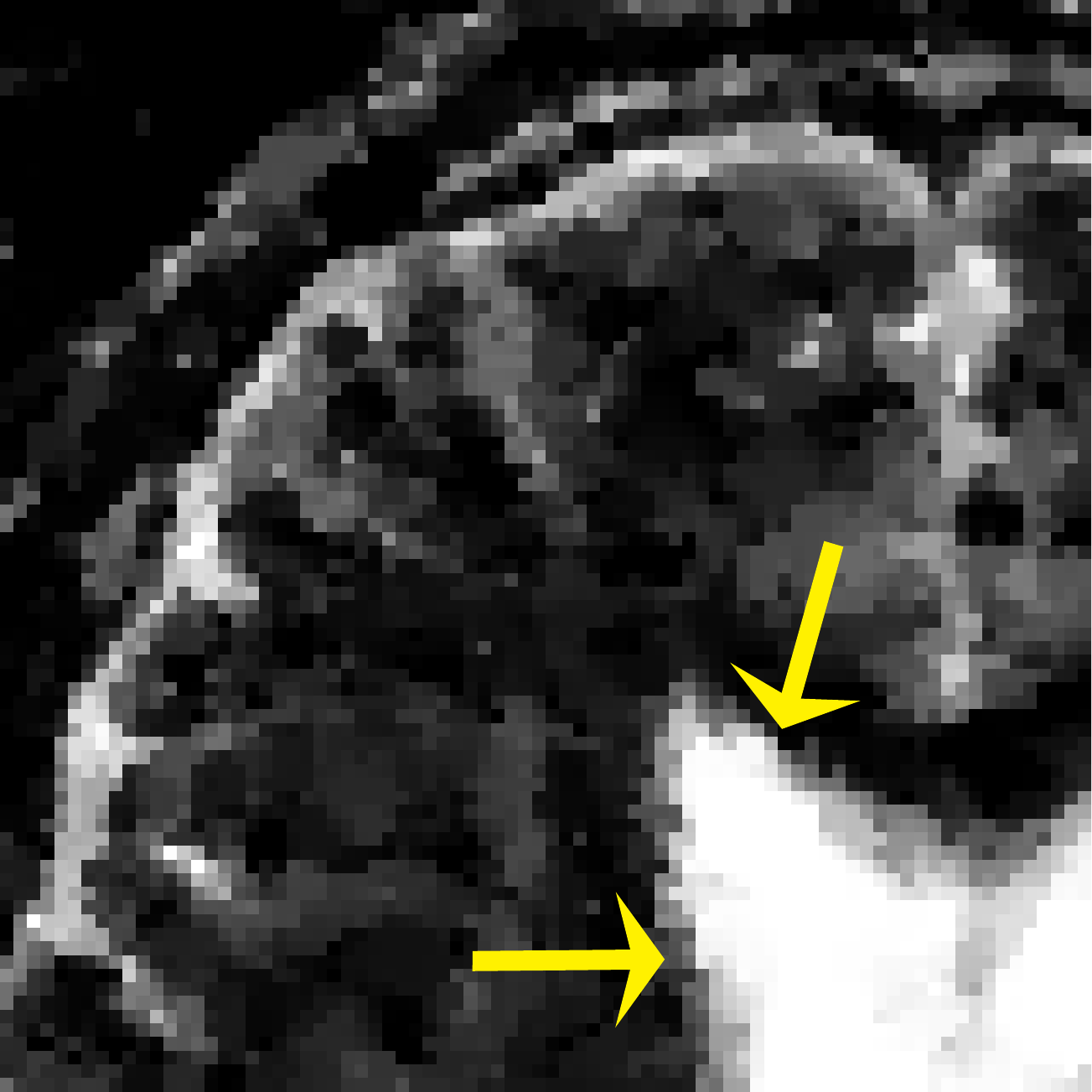}
		\end{minipage}&\hspace{-0.45cm}
		\begin{minipage}{3cm}
			\includegraphics[width=3cm]{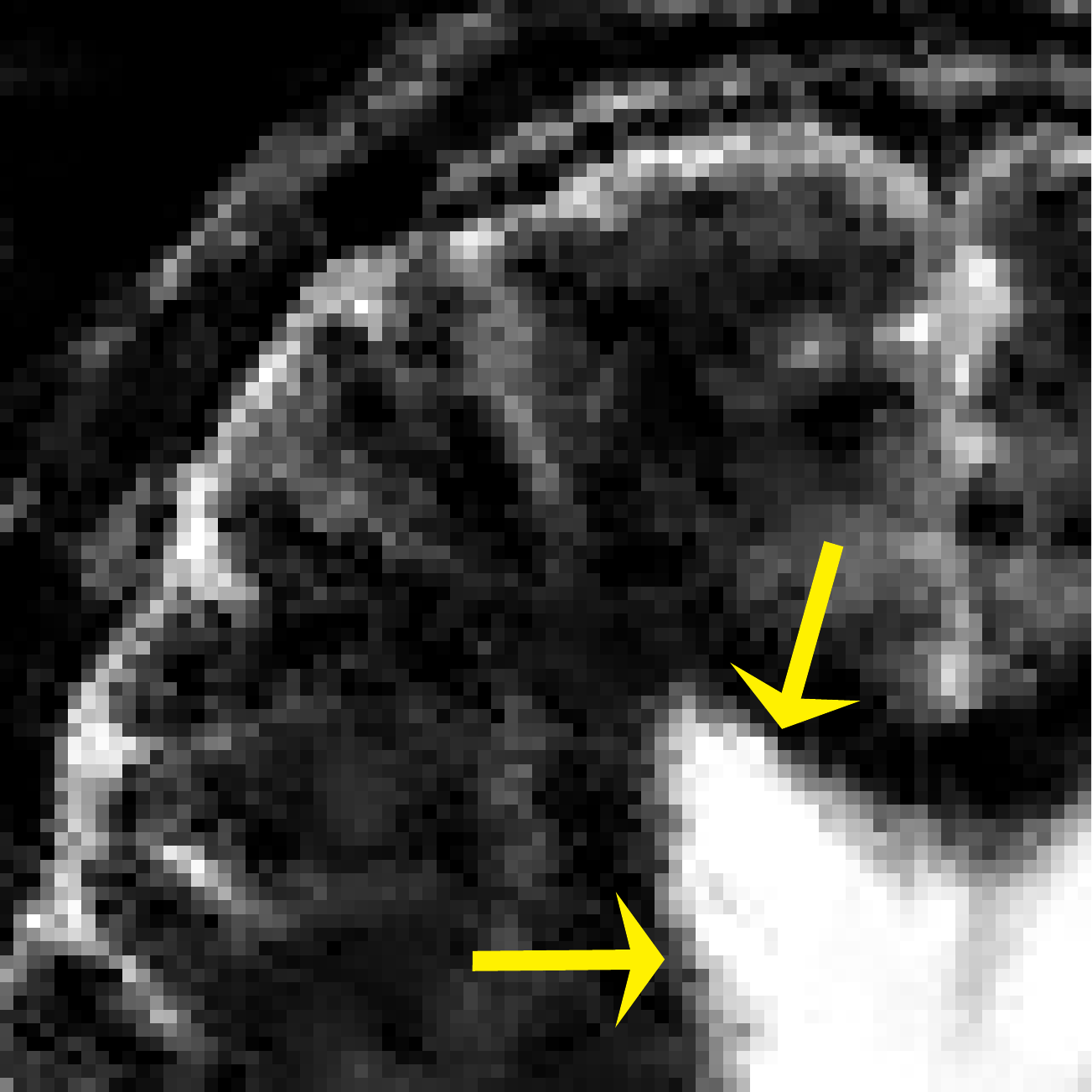}
		\end{minipage}&\hspace{-0.45cm}
		\begin{minipage}{3cm}
			\includegraphics[width=3cm]{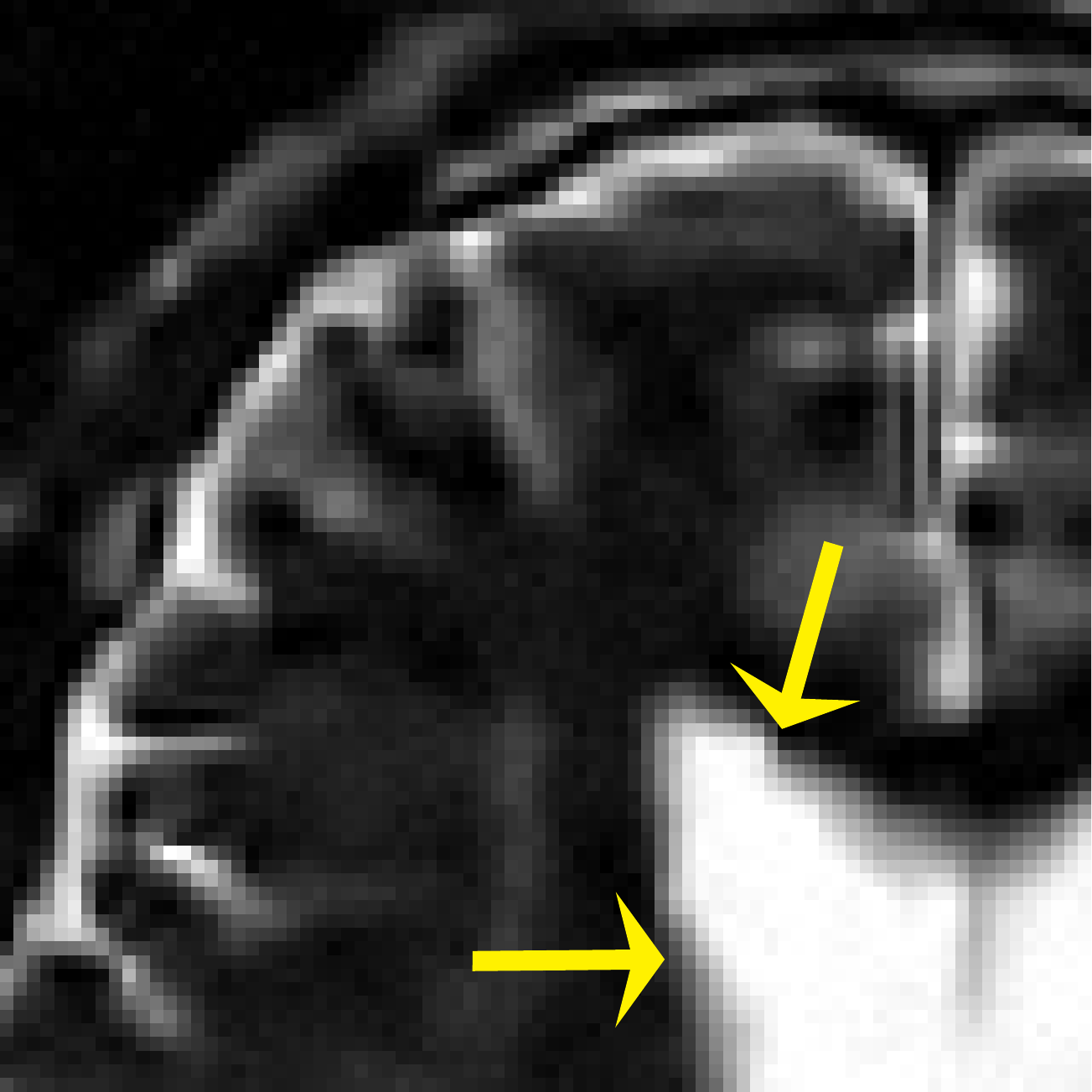}
		\end{minipage}&\hspace{-0.45cm}
		\begin{minipage}{3cm}
			\includegraphics[width=3cm]{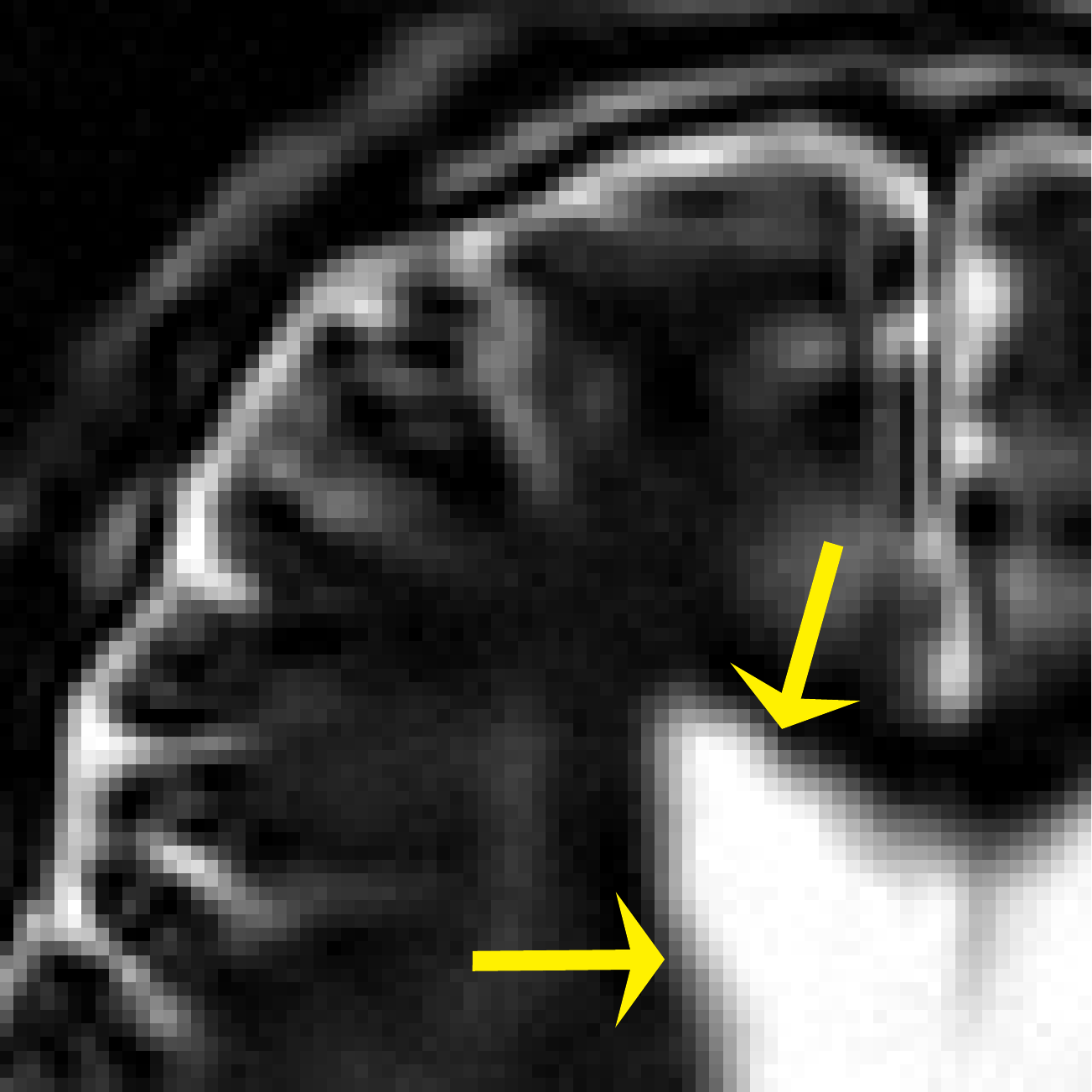}
		\end{minipage}\\
{\small{QPLS \cite{M.J.Ehrhardt2015}}}&\hspace{-0.45cm}
		{\small{JAnal \eqref{JAnalPETMRI}}}&\hspace{-0.45cm}
		{\small{JSTF \eqref{Proposed}}}&\hspace{-0.45cm}
		{\small{JSDDTF \eqref{DDTFPETMRI}}}
	\end{tabular}
	\caption{Zoom-in views of \cref{PETMRT215RadialResults} The first and second rows describe the PET images, and the third and fourth rows depict the MRI images. The yellow arrows indicate the regions worth noticing.}\label{PETMRT215RadialResultsZoom}
\end{figure}

\section{Conclusions}\label{Conclusion}

In this paper, we proposed a joint sparsity based tight frame regularization PET-MRI joint reconstruction model, together with a proximal alternating minimization algorithm. The numerical experiments show that our models \eqref{Proposed} and \eqref{DDTFPETMRI} both outperform the existing models in \cite{M.J.Ehrhardt2015}. This performance gain of our proposed models mainly comes from taking the different regularity of the different modality images into the consideration, as well as the structural correlation. Finally, our convergence analysis demonstrates that the sequence generated by our algorithm globally converges to a critical point of the proposed model.

\section*{Acknowledgments} We would like to thank the anonymous reviewers for their constructive suggestions and comments that helped tremendously with improving the presentations of this paper.

\bibliographystyle{siamplain}

\end{document}